\documentclass[11pt]{smfart}
\usepackage{amssymb,latexsym}
\usepackage{graphicx}
\usepackage{version}
\usepackage[all]{xy} 
\usepackage{lscape}
\entrymodifiers={!!<0pt,0.7ex>+}

\usepackage{geometry}
\usepackage[applemac]{inputenc} 
\usepackage{amsfonts}
\usepackage{amssymb}
\DeclareGraphicsRule{.tif}{png}{.png}{`convert #1 `dirname #1`/`basename #1 .tif`.png}
\usepackage{epstopdf}

\newtheorem{theorem}{Theorem}[section]

\newtheorem{proposition}{Proposition}[section]
\newtheorem{corollary}{Corollary}[section]

\newtheorem{definition}{Definition} [section]

\newtheorem{lemma}{Lemma}[section]
\newtheorem{remark}{Remark}[section]

\begin{document}

\title[The $\mathfrak{S}(\infty)$ Yang-Mills measure]{A combinatorial theory of random matrices III: \\ Random walks on $\mathfrak{S}(N)$, ramified coverings and the $\mathfrak{S}(\infty)$ Yang-Mills measure}
\author[Franck Gabriel]{Franck Gabriel \\ E-mail: \parbox[t]{0.45\linewidth}{\texttt{franck.gabriel@normalesup.org}}}
\email{franck.gabriel@normalesup.org}
\address{ }
\address{Université Pierre et Marie Curie (Paris 6)\\ Laboratoire de Probabilités et Modèles Aléatoires\\ 4, Place Jussieu\\  F-75252 Paris Cedex 05.}
\address{Mathematics Institute\\ 
Zeeman Building\\ 
University of Warwick\\ 
Coventry CV4 7AL.}
\maketitle

\title{Convergence of ramified coverings, \\the $\mathfrak{S}(\infty)$-Yang-Mills.}
\maketitle

\begin{abstract}
The aim of this article is to study some asymptotics of a natural model of random ramified coverings on the disk of degree $N$. We prove that the monodromy field, called also the holonomy field, converges in probability to a non-random continuous field as $N$ goes to infinity. In order to do so, we use the fact that the monodromy field of random uniform labelled simple ramified coverings on the disk of degree $N$ has the same law as the $\mathfrak{S}(N)$-Yang-Mills measure associated with the random walk by transpositions on $\mathfrak{S}(N)$. 

This allows us to restrict our study to random walks on $\mathfrak{S}(N)$: we prove theorems about asymptotics of random walks on $\mathfrak{S}(N)$  in a new framework based on the geometric study of partitions and the Schur-Weyl-Jones's dualities. In particular, given a sequence of conjugacy classes $(\lambda_N \subset \mathfrak{S}(N))_{N \in \mathbb{N}}$, we define a notion of convergence for $(\lambda_N)_{N \in \mathbb{N}}$ which implies the convergence in non-commutative distribution and in $\mathcal{P}$-distribution of the $\lambda_N$-random walk to a multiplicative $\mathcal{P}$-L\'{e}vy process. This limiting process is shown not to be a free multiplicative L\'{e}vy process and we compute its log-cumulant transform. We give also a criterion on $(\lambda_N)_{N \in \mathbb{N}}$ in order to know if the limit is random or not. 
\end{abstract}

\section{Intoduction}

Yang-Mills theory was introduced by Yang and Mills, in 1954, in \cite{YM54} as a theory of random connections on a principal bundle with gauge symmetry. In two dimensions, it has been defined by mathematicians (\cite{Albquestion}, \cite{AHKH86}, \cite{AHKH88}, \cite{Dri89}, \cite{Dri91}, \cite{Gro85}, \cite{Gro88}, \cite{Levythese}, \cite{Levy1},  \cite{Sen92}, \cite{Sen97a}) and it has become well understood that it was a theory of random multiplicative functions from the set of paths of a two dimentional surface to a compact group $G$. In \cite{Gabholo}, the author proved that an axiomatic formulation of planar Yang-Mills measures, similar to the axioms for L\'{e}vy processes, could be set: this allowed to prove a correspondence between Yang-Mills measures and a set of L\'{e}vy processes on $G$. In the following, by Yang-Mills measure, we consider the one given by choosing a Brownian motion on $G$. 

When the structure group $G$ is a discrete group, T. L\'{e}vy proved in \cite{Levy1} that the Yang-Mills measure could be seen as the random monodromy field of a random ramified $G$-bundle. Since ramified $\mathfrak{S}(N)$-bundles are in bijection with ramified coverings with $N$ sheets, one recovers the link explained by A. D'Adda and P. Provero in \cite{addapro2} and \cite{addapro1} between $\mathfrak{S}(N)$-Yang-Mills measure and random branched $\mathfrak{S}(N)$ coverings. It has to be noticed that this link is different from the $U(N)$-Yang-Mills measure/ramified coverings partly explained in \cite{Levy2} and also known as the Yang-Mills/String duality. The theory of random ramified coverings has also some interesting and challenging links with quantum gravity \cite{Zvonkine}.

In this article, we study the asymptotic of the theory of random ramified coverings coming from the $\mathfrak{S}(N)$-Yang-Mills measure as $N$ goes to infinity: we construct the $\mathfrak{S}(\infty)$-master field. The rigorous study of the asymptotics of Yang-Mills measures driven by the Brownian motion on the unitary group begun with M. Anshelevich and A.N. Sengupta in \cite{SenguptaA} where the convergence was proved for a weak Yang-Mills measure and T. L\'{e}vy in \cite{Levymaster} where asymptotics and Makeenko-Migdal equations were proved for the full Yang-Mills measure. In this last article, the unitary, orthogonal and sympleptic groups were considered. The convergence of the Yang-Mills measure driven by the different Brownian motions, as the dimension of the group goes to infinity, was proved by using estimates for the speed of convergence in non-commutative distribution of arbitrary words in independent Brownian motions. In the article \cite{FGA}, the author and his co-authors show how to prove asymptotics of Yang-Mills measures driven by L\'{e}vy processes on the unitary and orthogonal groups without using any estimates for the speed of convergence: the asymptotic of Yang-Mills measures is a consequence of the convergence in non-commutative distribution of the L\'{e}vy processes considered and a kind of two-dimensional Kolmogorov's continuity theorem proved by T. L\'{e}vy in \cite{Levy1}.

Using similar arguments, we prove convergence of Yang-Mills measures driven by random walks on the symmetric groups by proving the convergence in non-commutative distribution of some continuous-time random walks on the symmetric groups. For sake of simplicity, for any integer $N$, we only consider random walks which jump by an element which is drawn uniformly from a conjugacy class $\lambda_{N}$ of $\mathfrak{S}(N)$. If the conjugacy class $\lambda_{N}$ converges in some sense, then the random walks will converge in non-commutative distribution. In particular, the eigenvalue distribution will converge as $N$ goes to infinity. When $\lambda_N$ is the set of transpositions, this result was shown using representation theory \cite{BeresKozma}. Besides, it seems possible that some of these results could be deduced from the proofs of articles like \cite{BeresDurrett}, \cite{Beres} where the distance from the identity was proved to converge. The study of some asymptotics linked with random walks was also one of the concern of the article \cite{Schramm}. In these last articles, the heuristic idea was to consider the symmetric group as a ``Lie group" whose ``Lie algebra" would be in some sense $\mathbb{Z}[C]$ where $C=\cup_{k=2}^{\infty} \left([|1,N|]^{k}/\sim\right)$ where $(i_1,...,i_k) \sim (j_1,...,j_k)$ if the second one is obtained by a cyclic permutation of the first one. In this picture, the exponential of $c \in C$ would just be the permutation which has $c$ as a single non-trivial cycle. The interesting fact about this is that, one can link easily the Brownian motion on the ``Lie algebra" which drives the Brownian motion on the symmetric group (the random walk by transposition) with some Erd\"{o}s-Renyi random graph process. Using the natural coupling between the two processes, one can then transfer results from Erd\"{o}s-Renyi random graph processes to the study of random walks on the symmetric graph.

In this article, we use a generalization of the non-commutative probability ideas, constructed in \cite{Gab1} and \cite{Gab2}, in order to prove asymptotics and phase transitions for the random walks on the symmetric group without using the coupling with the Erd\"{o}s-Renyi random graph processes. This allows us to show that asymptotics of random walks on the symmetric groups can be studied with the same tools than the one used for the study of multiplicative unitary L\'{e}vy processes (Section 7 of \cite{Gab2}). This method is a generalization of the method used by T. L\'{e}vy in \cite{Levy2} or \cite{Levymaster} in order to study the large $N$ asymptotics of the Brownian motions on $U(N), O(N)$ and $Sp(N)$. In particular, we do not use any theory of representations, as opposed to \cite{BeresKozma} where some results were given for the random walk by transpositions. We prove results in a more general setting, in particular we do not ask that the elements of $\lambda_N$ have bounded support as $N$ goes to infinity. This allows us to show that there exist two behaviors for the eigenvalue distribution: if the size of the support is $o(N)$ then it converges in probability to a non random probability measure we are able to compute explicitely, and if the support is growing like $\alpha N$, the eigenvalue distribution converges in law to a random measure. As an application of asymptotic $\mathcal{P}$-freeness of independent matrices which are invariant by the symmetric group, we get that the whole random walk converges in distribution in non-commutative distribution to a process whose increments are not free but $\mathcal{P}$-free. This gives the first non-trivial example of multiplicative $\mathcal{P}$-L\'{e}vy process whose log-cumulant transform is computed.

\subsection{Layout}
The results we present in this article are based on the study of the asymptotic of random walks on the symmetric group (Section \ref{conv}). In Section \ref{sec:remind}, we give a summary of some definitions and results proved in \cite{Gab1} and \cite{Gab2}. The theorems about convergence of random walks on the symmetric group $\mathfrak{S}(N)$ are presented in Section \ref{generaltheorem}. After some preliminary results explained in Section \ref{prep}, we give the proofs of these theorems in Section \ref{proof}. The log-cumulant transform for the limit of random walks on the symmetric group is computed in Section~\ref{sec:logcu}. 

A short presentation of Yang-Mills measure with $\mathfrak{S}(N)$-gauge group is explained in Section \ref{YMConv}. In the same section, we prove that the Wilson loops in $\mathfrak{S}(N)$-Yang-Mills measure converge in probability as $N$ goes to infinity to a non-random field: the $\mathfrak{S}(\infty)$-master field. 

Based on the results of T. L\'{e}vy in \cite{Levy1}, we explain in Section \ref{sec:cov} how to link the study of random coverings of the disk and the study of $\mathfrak{S}(N)$-Yang-Mills measure. This allows us to prove that the monodromy field of a model of random simple ramified labelled coverings of the unit disk with $N$ sheets converges in probability to the $\mathfrak{S}(\infty)$-master field. In Section \ref{sec:computation}, we explain how to compute the $\mathfrak{S}(\infty)$-master field by giving an example. 
 
\section{Convergence of random walks on $\mathfrak{S}(N)$}
\label{conv}
\subsection{A brief reminder}
\label{sec:remind}
We review briefly the definitions and results, proved in \cite{Gab1} and \cite{Gab2}, that we will need later. In these articles, we study the asymptotics of random matrices when the size of these matrices goes to infinity. Let ${L}^{\infty^{-}} \otimes \mathcal{M}(\mathbb{C})$ be the set of sequences of random matrices $(M_N)_{N \in \mathbb{N}}$ such that $M_{N}$ is a $N \times N$ random matrice whose entries have order of any moments. For any family of elements $(a_i)_{i\in I}$ in ${L}^{\infty^{-}} \otimes \mathcal{M}(\mathbb{C})$, the algebra generated by  $(a_i)_{i\in I}$ is simply:
\begin{align}
\label{eq:algebre}
\mathcal{A}\left((a_i)_{i\in I}\right) = \{ P(a_{i_1},...,a_{i_k}) | (i_1,...,i_k) \in I, P \in \mathbb{C}\{X_1,...,X_k\}, k\in \mathbb{N}\}, 
\end{align}
where $\mathbb{C}\{X_1,...,X_k\}$ is the algebra of non-commutative polynomials.

 Let $M$ be in ${L}^{\infty^{-}} \otimes \mathcal{M}(\mathbb{C})$. We will suppose from now on that any element $M$ of ${L}^{\infty^{-}} \otimes \mathcal{M}(\mathbb{C})$ we consider is {\em invariant in law by conjugation by the symmetric group}. This means that for any integer $N$, for any $\sigma \in \mathfrak{S}(N)$, viewed as a $N\times N$ matrice, $\sigma M_N \sigma^{-1}$ has the same law as $M_N$. In order to study the asymptotic of $M$, let us define the partition observables. 

Let $k$ be a positive integer, $\mathcal{P}_k$ is the set of partitions of $\{1,...,k, 1',...,k'\}$. There exists a graphical notation for partitions as illustrated in Figure \ref{fig:1}. Let $p \in \mathcal{P}_k$, let us consider $k$ vertices in a top row, labelled from $1$ to $k$ from left to right and $k$ vertices in a bottom row, labelled from $1'$ to $k'$ from left to right. Any edge between two vertices means that the labels of the two vertices are in the same block of the partition $p$. Using this graphical point of view, the set of permutations of $k$ elements, namely $\mathfrak{S}_k$, is a subset of $\mathcal{P}_k$: if $ \sigma$ is a permutation, we associate the partition $\{\{i,\sigma(i)'\} | i \in \{1,...,k\}\}$. 

\begin{figure}
\centering
\resizebox{0.75\textwidth}{!}{
  \includegraphics[width=6pt]{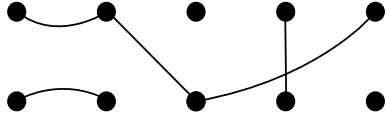}
}
\caption{The partition $\left\{ \{1',2' \}, \{1,2,3',5 \}, \{3 \}, \{4',4\},\{5' \} \right\}$.}
\label{fig:1}       
\end{figure}

 Let $p$ and $p'$ be two partitions in $\mathcal{P}_k$. The set $\mathcal{P}_k$ has some nice structures that we are going to explain: 
\begin{enumerate}
\item{\bf Transposition: } The partition $^{t}p$ is obtained by flipping along an horizontal line a diagram representing $p$. 
\item {\bf Order} $p'$ is coarser that $p$, denoted $p \trianglelefteq p'$, if any block of $p$ is included in a block of $p'$. 
\item {\bf Supremum: } $p\vee p'$ is obtained by putting a diagram representing $p'$ over one representing $p$. 
\item {\bf Multiplication: } $p \circ p'$ is obtained by putting a diagram representing $p'$ above one representing $p$, identifying the lower vertices of $p'$ with the upper vertices of $p$, erasing the vertices in the middle row, keeping the edges obtained by concatenation of edges passing through the deleted vertices. It has the nice following property: if $p \circ p' \in \mathfrak{S}_k$ then $p$ and $p'$ are in $\mathfrak{S}_k$. Doing so, we remove a certain number of connected components, number which is denoted by $\kappa(p,p')$.
\item {\bf A family of multiplications:} $\mathbb{C}[\mathcal{P}_k(N)]$ is the algebra such that $pp'=N^{\kappa(p,p')} p \circ p'.$
\item {\bf Neutral element: } The partition $id_k = \{\{i,i'\}| i \in \{1,...,k\}\}$ is a neutral element for the multiplication. Often we will denote it by $id$ when there can not be any confusion. 
\item {\bf Height function: }  ${\sf nc}(p)$ is the number of blocks of $p$. 
\item {\bf Cycle: } A cycle is a block of $p\vee id$: ${\sf nc}(p\vee id)$ is the number cycles of $p$. A partition $p$ is {\em irreducible} if ${\sf nc}(p \vee id) = 1$. 
\item {\bf Representation: } There exists an important representation of $\mathbb{C}[\mathcal{P}_k(N)]$ on $\left(\mathbb{C}^{N}\right)^{\otimes k}$. Let $(e_i)_{i=1}^{N}$ be the canonical basis of $\mathbb{C}^{N}$ and let $E_{i}^{j}$ be the matrix which sends $e_j$ on $e_i$ and any other element of the canonical basis on $0$. For any $I=(i_1,...,i_k,i_{1'},...,i_{k'}) \in \{1,...,N\}^{2k}$, we define ${\sf Ker}(I)$ the partition such that two elements $a$ and $b$ of $\{1,...,k,1',...,k'\}$ is in a block of ${\sf Ker}(I)$ if and only if $i_a = i_b$. We define: 
\begin{align*}
\rho_N(p) = \sum_{I = (i_1,...,i_k,i_{1'},...,i_{k'}) \in \{1,...,N\}^{2k}| p \trianglelefteq {\sf Ker}(I) } E_{i_{1'}}^{i_1} \otimes ... \otimes E_{i_{k'}}^{i_k}. 
\end{align*}
The application $\rho_N$ is a representation of $\mathbb{C}[\mathcal{P}_k(N)]$. We will also define: 
\begin{align*}
\rho_N(p^{c}) = \sum_{I = (i_1,...,i_k,i_{1'},...,i_{k'}) \in \{1,...,N\}^{2k}| p = {\sf Ker}(I) } E_{i_{1'}}^{i_1} \otimes ... \otimes E_{i_{k'}}^{i_k}. 
\end{align*}
\item {\bf Tensor product: } The partition $p \otimes p' \in \mathcal{P}_{2k}$ is obtained by putting a diagram representing $p'$ on the right of a diagram representing $p$. It satisfies the identity $\rho_{N}(p \otimes p') = \rho_N(p) \otimes \rho_N(p')$. 
\item {\bf Extraction: } The extraction of $p$ to a symmetric subset $I$ of $\{1,...,k,1',...,k'\}$ is obtained by erasing the vertices which are not in $I$ and relabelling the remaining vertices (definition given at the beginning of Section 3.1.2 of \cite{Gab1}).
\end{enumerate}

In \cite{Gab1}, we defined a distance (Definition $2.2$ of \cite{Gab1}) and a geodesic order on $\mathcal{P}_k$ (Definition $2.4$ of \cite{Gab1}). For any partitions $p$ and $p'$ in $\mathcal{P}_k$, the distance between $p$ and $p'$ is: 
\begin{align*}
d(p,p') = \frac{{\sf nc}(p) + {\sf nc}(p')}{2} - {\sf nc}(p\vee p'). 
\end{align*}
We wrote that $p \leq p'$ if $d(id,p)+d(p,p') = d(id, p')$: this defines an order on $\mathcal{P}_k$. The set $\{p| p\leq p'\}$ is denoted by $[id, p']_{\mathcal{P}_k}$.

Using the set $\mathcal{P}_k$, we defined some observables. Let $N$ be a positive integer and let $p \in \mathcal{P}_k$, the {\em random $p$-moment} of $M_N$ is given by: 
\begin{align}
\label{eq:mom}
m_{p}(M_N) = \frac{1}{N^{{\sf nc}(p \vee id)}} Tr \left( M_N^{\otimes k} \rho_N(^{t}p)\right),
\end{align}
and the {\em $p$-moment} of $M_N$ is given by $\mathbb{E}m_p(M_N) = \mathbb{E}\left[m_p(M_N)\right].$ The {\em random $p$-exclusive moment} and the {\em $p$-exclusive moment} of $M_N$ are given by the same formulas, except that one has to replace $p$ with $p^{c}$. 

Using the Schur-Weyl-Jones duality, $\mathbb{E}\left[M_N^{\otimes k}\right]$ belongs to ${\sf Vect} \left\{\rho_N(p) | p \in \mathcal{P}_k\right\}$ and if $N$ is bigger than $2k$, $\left\{\rho_N(p) | p \in \mathcal{P}_k\right\}$ is a basis. We will always suppose from now on that this condition on $N$ and $k$ is always satisfied. In \cite{Gab2}, we defined the {\em finite dimensional $\mathcal{P}$-cumulants} $\left(\mathbb{E}\kappa_{p}(M_N)\right)_{p \in \mathcal{P}_k}$ by the formula: 
\begin{align}
\label{eq:coord}
\mathbb{E}\left[M_N^{\otimes k}\right] = \sum_{p \in \mathcal{P}_k} \frac{\mathbb{E}\kappa_{p}(M_N)}{N^{{\sf nc}(p)- {\sf nc}(p\vee id)}}\rho_N(p). 
\end{align}

For any $M_1,...,M_k$ in  ${L}^{\infty^{-}} \otimes \mathcal{M}(\mathbb{C})$, one can generalize these definitions in order to define for any $p$ the observables $m_{p}(M_{1,N}, ..., M_{k,N})$, $\mathbb{E}m_{p}(M_{1,N}, ..., M_{k,N})$, $m_{p^{c}}(M_{1,N}, ..., M_{k,N})$, $\mathbb{E}m_{p^{c}}(M_{1,N}, ..., M_{k,N})$ and  $\mathbb{E}\kappa_{p}(M_{1,N}, ..., M_{k,N})$: one has to replace $M_N^{\otimes k}$ by $M_{1,N}\otimes...\otimes M_{k,N}$. Let us state a consequence of Theorems $3.2$ and $4.2$ of \cite{Gab2}. 

\begin{theorem}
\label{th:convergence}
The three following convergences as $N$ goes to infinity are equivalent: 
\begin{enumerate}
\item convergence of $\left(\mathbb{E}\kappa_{p}(M_N)\right)_{p \in \mathcal{P}_k}$,
\item convergence of $\left(\mathbb{E}m_p(M_N)\right)_{p \in \mathcal{P}_k}$, 
\item convergence of $\left(\mathbb{E}m_{p^{c}}(M_N)\right)_{p \in \mathcal{P}_k}$. 
\end{enumerate}
If for any integer $k$, one of these convergences holds, we say that $M$ converges in $\mathcal{P}$-distribution. Let us suppose it is the case: for any integer $k$, for any $p \in \mathcal{P}_k$: 
\begin{align*}
\lim_{N \to \infty} \mathbb{E}m_{p}(M_N) &= \sum_{p'\in[id, p]_{\mathcal{P}_k}} \lim_{N \to \mathbb{N}} \mathbb{E}\kappa_{p'}(M_N), \\
\lim_{N \to \infty} \mathbb{E}m_{p}(M_N) &= \sum_{p' \in \mathcal{P}_k| p \trianglelefteq p', {\sf nc}(p\vee id) = {\sf nc}(p'\vee id)} \lim_{N \to \infty} \mathbb{E}m_{p'^{c}} (M_N),
\end{align*}
and for any $\sigma \in \mathfrak{S}_k$: 
\begin{align*}
\lim_{N \to \infty} \mathbb{E}m_{\sigma^{c}}(M_N) = \lim_{N \to \infty} \mathbb{E}\kappa_{\sigma}(M_N).
\end{align*} 
\end{theorem}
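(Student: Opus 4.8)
\emph{Sketch of proof.} The argument is purely linear-algebraic. The three families $\big(\mathbb{E}\kappa_{p}(M_N)\big)_{p\in\mathcal{P}_k}$, $\big(\mathbb{E}m_{p}(M_N)\big)_{p\in\mathcal{P}_k}$ and $\big(\mathbb{E}m_{p^{c}}(M_N)\big)_{p\in\mathcal{P}_k}$ are obtained from one another by linear maps whose matrices, in the normalisations built into \eqref{eq:mom}, \eqref{eq:coord} and the definition of the exclusive moments, are rational in $N$ and converge, as $N\to\infty$, to invertible matrices; hence convergence of one family is equivalent to convergence of any other. The limiting formulas are then read off by keeping only the terms of the transformations that carry no negative power of $N$. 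The combinatorial facts that make this work are exactly Theorems~$3.2$ and~$4.2$ of \cite{Gab2}, so the plan is to write the finite-$N$ identities explicitly and invoke those results.

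First, substituting \eqref{eq:coord} into \eqref{eq:mom} and using that $\rho_N$ is a representation of $\mathbb{C}[\mathcal{P}_k(N)]$, so that $\rho_N(p')\rho_N({}^{t}p)=N^{\kappa(p',{}^{t}p)}\rho_N(p'\circ {}^{t}p)$, together with the elementary identity $Tr\,\rho_N(q)=N^{{\sf nc}(q\vee id)}$, one obtains
\begin{align*}
\mathbb{E}m_{p}(M_N)=\sum_{p'\in\mathcal{P}_k}N^{\,e(p,p')}\,\mathbb{E}\kappa_{p'}(M_N),
\end{align*}
where $e(p,p')$ is the explicit integer assembled from ${\sf nc}(p\vee id)$, ${\sf nc}(p')$, ${\sf nc}(p'\vee id)$, $\kappa(p',{}^{t}p)$ and ${\sf nc}\big((p'\circ {}^{t}p)\vee id\big)$. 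Theorem~$3.2$ of \cite{Gab2} says that $e(p,p')\le 0$, with equality precisely when $p'\in[id,p]_{\mathcal{P}_k}$; hence the transition matrix equals the zeta matrix $\big(\mathbf{1}[p'\le p]\big)_{p,p'\in\mathcal{P}_k}$ of the finite poset $(\mathcal{P}_k,\le)$ up to a term of order $1/N$. That zeta matrix is invertible (unitriangular for any linear extension of $\le$), so for $N$ large the transition matrix is invertible and it and its inverse both converge as $N\to\infty$. This gives the equivalence (1)$\Leftrightarrow$(2), and letting $N\to\infty$ in the displayed identity gives the first limiting formula.

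Next, grouping the index set in the definition of $\rho_N(p)$ according to the exact value of ${\sf Ker}(I)$ gives $\rho_N(p)=\sum_{p\trianglelefteq q}\rho_N(q^{c})$, and inserting this in \eqref{eq:mom} yields the \emph{exact} identity
\begin{align*}
\mathbb{E}m_{p}(M_N)=\sum_{p\trianglelefteq q}N^{\,{\sf nc}(q\vee id)-{\sf nc}(p\vee id)}\,\mathbb{E}m_{q^{c}}(M_N).
\end{align*}
Since $p\trianglelefteq q$ forces ${\sf nc}(q\vee id)\le {\sf nc}(p\vee id)$, this system is unitriangular for the refinement order with all off-diagonal exponents $\le 0$; it is invertible over $\mathbb{C}(N)$ and its inverse converges, as $N\to\infty$, to the inverse of the limiting (again unitriangular) matrix. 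This is essentially Theorem~$4.2$ of \cite{Gab2}; it gives (2)$\Leftrightarrow$(3), and passing to the limit kills the terms with ${\sf nc}(q\vee id)<{\sf nc}(p\vee id)$, which is the second limiting formula. Composing the inversions of the last two steps expresses $\lim_{N}\mathbb{E}m_{p^{c}}(M_N)$ as an alternating sum of the $\lim_{N}\mathbb{E}\kappa_{p'}(M_N)$ over the relevant intervals; for $p=\sigma\in\mathfrak{S}_k$ a permutation, the combinatorics of \cite{Gab2} (one checks that $[id,\sigma]_{\mathcal{P}_k}$ and the coarsenings of $\sigma$ with the same number of cycles interlock exactly) force this alternating sum to collapse to the single term $\lim_{N}\mathbb{E}\kappa_{\sigma}(M_N)$ with coefficient $1$, which is the last displayed identity.

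The only genuine difficulty lies in the combinatorial estimates on the exponents --- non-positivity of $e(p,p')$ and the identification of its zero locus with $[id,p]_{\mathcal{P}_k}$, together with the analogous statement for the second step --- and in the collapse of the alternating sum for permutations; these are precisely Theorems~$3.2$ and~$4.2$ of \cite{Gab2}. What remains here is bookkeeping: keeping the three normalisations consistent, performing the two poset inversions, and observing that the equivalences hold for each fixed $k$, which is what convergence in $\mathcal{P}$-distribution requires.
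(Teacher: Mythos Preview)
Your sketch is correct and follows exactly the route the paper indicates: the paper does not prove this theorem at all but merely records it as a consequence of Theorems~$3.2$ and~$4.2$ of \cite{Gab2}, which are precisely the combinatorial inputs you invoke. You have in fact written out more of the bookkeeping (the finite-$N$ change-of-basis identities and the unitriangularity arguments) than the paper itself provides.
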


The same results hold for a family of elements of $L^{\infty^{-}} \otimes \mathcal{M}(\mathbb{C})$. From now on, when in an observable or in a coordinate number we do not put the index $N$, it means that we take the limit as $N$ goes to infinity. For example, $\mathbb{E}m_{p}(M)$ stands for $\lim_{N \to \infty} \mathbb{E}m_{p}(M_N)$. Let us suppose that $M$ converges in $\mathcal{P}$-distribution. We associate the {\em $\mathcal{R}$-transform} of $M$ which is the element $\mathcal{R}[M] \in \left(\bigoplus_{k=0}^{\infty} \mathbb{C}[\mathcal{P}_k]\right)^{*}$ such that for any $k$, any $p \in \mathcal{P}_k$, $\left(\mathcal{R}[M]\right)(p) = \mathbb{E}\kappa_{p}(M)$. 

We say that $M$ satisfies the {\em asymptotic $\mathcal{P}$-factorization} if for any positive integers $k$ and $k'$, any $p_1 \in \mathcal{P}_k$ and $p_2 \in \mathcal{P}_{k'}$, 
\begin{align*}
\mathbb{E}m_{p_1 \otimes p_2} (M) = \mathbb{E}m_{p_1} (M)\mathbb{E}m_{p_2} (M). 
\end{align*}
Let us state an easy version of Theorem $3.1$ of \cite{Gab2}.

\begin{theorem}
\label{th:conv}
Let us suppose that $(M_i)_{i \in I} \in L^{\infty^{-}} \otimes \mathcal{M}(\mathbb{R})$ converges in $\mathcal{P}$-distribution and satisfies the asymptotic $\mathcal{P}$-factorization property then the $\mathcal{P}$-moments of $(M_i)_{i \in I}$ converge in probability: for any integer $k$, any $i_1,...,i_k$ in $I$, any $p \in \mathcal{P}_k$, $m_{p}(M_{i_1,N}, ..., M_{i_k,N})$ converges in probability to $\mathbb{E}m_{p}(M_{i_1},...,M_{i_k})$. 
\end{theorem}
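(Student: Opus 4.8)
The plan is the classical ``mean plus vanishing variance'' argument for convergence in probability towards a constant. The hypothesis of convergence in $\mathcal{P}$-distribution already gives, for every $k$, every $i_1,\dots,i_k$ in $I$ and every $p\in\mathcal{P}_k$, that $\mathbb{E}m_p(M_{i_1,N},\dots,M_{i_k,N})\to\mathbb{E}m_p(M_{i_1},\dots,M_{i_k})$, so it only remains to show that $m_p(M_{i_1,N},\dots,M_{i_k,N})$ concentrates around its mean, i.e. that its variance tends to $0$. (Since the $M_i$ have real entries and $\rho_N(p)$ has entries in $\{0,1\}$, the random variable $m_p(M_{i_1,N},\dots,M_{i_k,N})$ is real and has moments of every order, so its variance is well defined.)

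The key tool, which I would record as a short preliminary lemma in Section~\ref{prep}, is that the product of two random $\mathcal{P}$-moments is again a single random $\mathcal{P}$-moment, the relevant partition being the tensor product: for $p_1\in\mathcal{P}_k$, $p_2\in\mathcal{P}_{k'}$ and any choice of indices,
\[
m_{p_1}(M_{i_1,N},\dots,M_{i_k,N})\,m_{p_2}(M_{j_1,N},\dots,M_{j_{k'},N})
= m_{p_1\otimes p_2}(M_{i_1,N},\dots,M_{i_k,N},M_{j_1,N},\dots,M_{j_{k'},N}).
\]
To prove this I would combine $Tr(X)Tr(Y)=Tr(X\otimes Y)$, the identity $(AB)\otimes(CD)=(A\otimes C)(B\otimes D)$, the compatibility $\rho_N(p_1\otimes p_2)=\rho_N(p_1)\otimes\rho_N(p_2)$ recalled in Section~\ref{sec:remind}, the identity $^{t}(p_1\otimes p_2)={}^{t}p_1\otimes{}^{t}p_2$, and finally the fact that placing two diagrams side by side merges no cycles, so that ${\sf nc}\big((p_1\otimes p_2)\vee id\big)={\sf nc}(p_1\vee id)+{\sf nc}(p_2\vee id)$; this last point is exactly what makes the normalising powers of $N$ in the definition~\eqref{eq:mom} of $m_{\bullet}$ multiply correctly.

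Granting the lemma, apply it with $p_1=p_2=p$ and with the \emph{same} $k$-tuple of matrices plugged into both factors, so that
\[
m_p(M_{i_1,N},\dots,M_{i_k,N})^2 = m_{p\otimes p}(M_{i_1,N},\dots,M_{i_k,N},M_{i_1,N},\dots,M_{i_k,N}).
\]
Taking expectations and using convergence in $\mathcal{P}$-distribution of the family $(M_i)_{i\in I}$ — applied to the list $i_1,\dots,i_k,i_1,\dots,i_k$ and the partition $p\otimes p\in\mathcal{P}_{2k}$ — gives $\mathbb{E}\big[m_p(M_{i_1,N},\dots,M_{i_k,N})^2\big]\to\mathbb{E}m_{p\otimes p}(M_{i_1},\dots,M_{i_k},M_{i_1},\dots,M_{i_k})$, and the asymptotic $\mathcal{P}$-factorization identifies this limit with $\big(\mathbb{E}m_p(M_{i_1},\dots,M_{i_k})\big)^2$. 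Since moreover $\big(\mathbb{E}m_p(M_{i_1,N},\dots,M_{i_k,N})\big)^2\to\big(\mathbb{E}m_p(M_{i_1},\dots,M_{i_k})\big)^2$, the variance $\mathbb{E}[m_p^2]-(\mathbb{E}m_p)^2$ tends to $0$. Hence $m_p(M_{i_1,N},\dots,M_{i_k,N})$ converges in $L^2$, and in particular in probability, to the constant $\mathbb{E}m_p(M_{i_1},\dots,M_{i_k})$.

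I do not anticipate a genuine obstacle here — this is indeed an ``easy version'' of Theorem~$3.1$ of \cite{Gab2} — the only point deserving attention being the bookkeeping of the powers of $N$ in the product-of-moments lemma, namely checking ${\sf nc}\big((p_1\otimes p_2)\vee id\big)={\sf nc}(p_1\vee id)+{\sf nc}(p_2\vee id)$ so that the two normalisations are compatible with multiplication. Everything else is a direct invocation of the two hypotheses together with the elementary identity $Tr(X)Tr(Y)=Tr(X\otimes Y)$.
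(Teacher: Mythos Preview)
Your argument is correct and is exactly the standard ``mean plus vanishing variance'' route one expects here; the product identity $m_{p_1}\cdot m_{p_2}=m_{p_1\otimes p_2}$ together with asymptotic $\mathcal{P}$-factorization does indeed force the variance to vanish. Note, however, that the paper does not supply its own proof of this statement at all: Theorem~\ref{th:conv} is presented in Section~\ref{sec:remind} purely as a reminder, with the proof deferred to Theorem~3.1 of \cite{Gab2}, so there is nothing in the present paper to compare your argument against --- but what you wrote is precisely the natural proof and almost certainly what \cite{Gab2} contains.
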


In particular, if $M$ is an orthogonal element, which means that for any positive integer $N$, $M_{N}$ is orthogonal, then the eigenvalue distribution of $M_N$ is on the unit circle $\mathbb{U}$. If $M$ converges in $\mathcal{P}$-distribution, the mean eigenvalue distribution converges to a measure $\mu$ (Theorem $1.1$ of \cite{Gab2}) and if $M$ satisfies the asymptotic $\mathcal{P}$-factorization property, the eigenvalue distribution converges in probability to~$\mu$.

Let $M$ and $L$ be elements of ${L}^{\infty^{-}} \otimes \mathcal{M}(\mathbb{C})$ which converge in $\mathcal{P}$-distribution. The notion of $\mathcal{P}$-freeness is defined as a condition of vanishing of mixed cumulants and a factorization property for compatible cumulants. The elements $M$ and $L$  are $\mathcal{P}$-free if the two following conditions hold:
\begin{itemize}
\item for any integer $k$, any $p \in \mathcal{P}_k$, any $(B_{1},...,B_{k}) \in \{M,L\}^{k}$,  $\mathbb{E}\kappa_{p}(B_{1},...,B_k) = 0$ if there exists $i$ and $j$ in $\{1,...,k\}$ in the same cycle of $p$ such that $B_{i} \ne B_{j}$
\item for any integers $k$ and $k'$, any $p \in \mathcal{P}_k$, any $p' \in \mathcal{P}_{k'}$, $\mathbb{E}\kappa_{p\otimes p'}(M,...,M,L,...,L)$ is equal to $\mathbb{E}\kappa_{p}(M,...,M)\mathbb{E}\kappa_{p'}(L,...,L).$
\end{itemize}  In \cite{Gab2}, we proved the following theorem. We recall that we always suppose that the random matrices are invariant in law by conjugation by the symmetric group. 

\begin{theorem}
\label{th:free}
If for any positive integer $N$, $M_N$ and $L_N$ are independent then they are $\mathcal{P}$-free. 
\end{theorem}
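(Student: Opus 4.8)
The plan is to compare two expansions of the tensor $\mathbb{E}[M_N^{\otimes a}\otimes L_N^{\otimes b}]$ in the basis $\{\rho_N(p)\mid p\in\mathcal{P}_{a+b}\}$, which is legitimate for $N$ large. Since $M_N$ and $L_N$ are independent — and each is invariant by conjugation, so the pair is jointly invariant and the mixed cumulants are well defined — conditioning on $L_N$ gives
\begin{align*}
\mathbb{E}[M_N^{\otimes a}\otimes L_N^{\otimes b}]=\mathbb{E}[M_N^{\otimes a}]\otimes\mathbb{E}[L_N^{\otimes b}],
\end{align*}
and expanding each factor through \eqref{eq:coord} and using the compatibility $\rho_N(q\otimes r)=\rho_N(q)\otimes\rho_N(r)$ of property (10) yields
\begin{align*}
\mathbb{E}[M_N^{\otimes a}\otimes L_N^{\otimes b}]=\sum_{q\in\mathcal{P}_a,\ r\in\mathcal{P}_b}\frac{\mathbb{E}\kappa_q(M_N)\,\mathbb{E}\kappa_r(L_N)}{N^{({\sf nc}(q)-{\sf nc}(q\vee id))+({\sf nc}(r)-{\sf nc}(r\vee id))}}\,\rho_N(q\otimes r).
\end{align*}
On the other hand, the defining formula for the mixed cumulants expands the same element over all $p\in\mathcal{P}_{a+b}$ with denominator $N^{{\sf nc}(p)-{\sf nc}(p\vee id)}$.

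The key combinatorial observation is that $(q,r)\mapsto q\otimes r$ is an injection of $\mathcal{P}_a\times\mathcal{P}_b$ onto the set of $p\in\mathcal{P}_{a+b}$ having no block that meets both $\{1,\dots,a,1',\dots,a'\}$ and its complement, and that for such $p=q\otimes r$ one has ${\sf nc}(p)={\sf nc}(q)+{\sf nc}(r)$ while, since $id_{a+b}=id_a\otimes id_b$ and the supremum is compatible with $\otimes$, also ${\sf nc}(p\vee id)={\sf nc}(q\vee id)+{\sf nc}(r\vee id)$. Hence the power of $N$ attached to $\rho_N(q\otimes r)$ is the \emph{same} in the two expansions, so by linear independence of the $\rho_N(p)$ one can match coefficients: for every $N$ large, $\mathbb{E}\kappa_p(M_N,\dots,M_N,L_N,\dots,L_N)=0$ whenever $p$ has a block meeting both sides, and $\mathbb{E}\kappa_{q\otimes r}(M_N,\dots,M_N,L_N,\dots,L_N)=\mathbb{E}\kappa_q(M_N)\,\mathbb{E}\kappa_r(L_N)$.

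It remains to pass to an arbitrary ordering $(B_1,\dots,B_k)\in\{M,L\}^k$ and to the limit. For the ordering, I would record the covariance of the cumulants under permutation of the tensor legs: conjugating \eqref{eq:coord} by the permutation operator $P_\tau$ on $(\mathbb{C}^N)^{\otimes k}$, using $P_\tau\rho_N(p)P_\tau^{-1}=\rho_N(\tau\cdot p)$ and the $\mathfrak{S}_k$-invariance of ${\sf nc}(\cdot)$ and ${\sf nc}(\cdot\vee id)$, one gets $\mathbb{E}\kappa_p(B_{\tau(1)},\dots,B_{\tau(k)})=\mathbb{E}\kappa_{\tau^{-1}\cdot p}(B_1,\dots,B_k)$, reducing the general case to the ``all $M$'s, then all $L$'s'' case above. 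One then observes that a block of $p$ meets both the $M$-legs and the $L$-legs if and only if a \emph{cycle} of $p$ does (because $p\trianglelefteq p\vee id$ and $id$ itself has no crossing block), so the vanishing statement becomes exactly ``$\mathbb{E}\kappa_p(B_1,\dots,B_k)=0$ whenever some cycle of $p$ contains $i,j$ with $B_i\neq B_j$'', and the factorization becomes $\mathbb{E}\kappa_{p\otimes p'}(M,\dots,M,L,\dots,L)=\mathbb{E}\kappa_p(M,\dots,M)\,\mathbb{E}\kappa_{p'}(L,\dots,L)$. Letting $N\to\infty$, which is allowed since $M$ and $L$ converge in $\mathcal{P}$-distribution, gives the two conditions defining $\mathcal{P}$-freeness.

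I do not expect a genuine obstacle here: the only delicate points are bookkeeping, namely checking that independence introduces no spurious power of $N$ (this is exactly the additivity of ${\sf nc}$ and of ${\sf nc}(\cdot\vee id)$ under $\otimes$, together with property (10)), and matching the condition the computation naturally produces — a condition on the \emph{blocks} of $p$ — with the form of the $\mathcal{P}$-freeness axiom, which is phrased in terms of the \emph{cycles} of $p$; this is the elementary remark $p\trianglelefteq p\vee id$ used above.
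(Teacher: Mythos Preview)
The paper does not prove this statement here: it is quoted as background from \cite{Gab2} (see the sentence immediately preceding the theorem). Your argument is correct and is the natural one --- factor $\mathbb{E}[M_N^{\otimes a}\otimes L_N^{\otimes b}]$ by independence, expand each factor via \eqref{eq:coord}, and identify coefficients in the basis $\{\rho_N(p)\}$ --- and the two bookkeeping points you isolate (additivity of ${\sf nc}$ and ${\sf nc}(\cdot\vee id)$ under $\otimes$, and the equivalence between ``a block of $p$ meets both sides'' and ``a cycle of $p$ does'', which follows from $p\trianglelefteq p\vee id$ together with the fact that $id_{a+b}=id_a\otimes id_b$ has no crossing block) are handled correctly. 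Note also that your computation shows, as a byproduct, that the \emph{pair} $(M,L)$ converges in $\mathcal{P}$-distribution once each of $M$ and $L$ does, since every finite-$N$ mixed cumulant is either $0$ or a product $\mathbb{E}\kappa_q(M_N)\,\mathbb{E}\kappa_r(L_N)$; this justifies the passage to the limit.
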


The notion of $\mathcal{P}$-freeness differs from the notion of freeness in the sense of Voiculescu. In particular, we gave a condition, in Theorem $3.7$ of \cite{Gab2}, in order to prove that two $\mathcal{P}$-free elements are not free. Let us denote by ${\sf 0}_{2}$ the partition $\{\{1,2,1',2'\} \}$.

\begin{lemma}
\label{eq:nonlibre}
Let us suppose that $M$ and $L$ are $\mathcal{P}$-free, satisfy the asymptotic $\mathcal{P}$-factorization and that $$\mathbb{E}\kappa_{{\sf 0}_2}[M] \mathbb{E}\kappa_{{\sf 0}_2}[L] \ne 0,$$ then $M$ and $L$ are not free. 
\end{lemma}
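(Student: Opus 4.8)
The plan is to argue by contradiction, comparing one mixed trace moment computed in two ways. First, since $M$ and $L$ converge in $\mathcal{P}$-distribution and satisfy the asymptotic $\mathcal{P}$-factorisation, Theorem~\ref{th:conv} makes every normalised-trace moment of a word in $M_N$ and $L_N$ converge in probability, so $M$ and $L$ are honest elements of a limiting non-commutative probability space, the phrase ``$M$ and $L$ are free'' is meaningful, and the limiting trace moments are read off from the $\mathcal{P}$-distribution by restricting the moment--cumulant formula of Theorem~\ref{th:convergence} to permutations $\sigma\in\mathfrak{S}_k\subset\mathcal{P}_k$. Assume, towards a contradiction, that $M$ and $L$ are free.

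I would use the word $MLML$, with $\sigma=(1\,2\,3\,4)\in\mathfrak{S}_4$ and $(B_1,B_2,B_3,B_4)=(M,L,M,L)$. On one hand the usual free computation with alternating centred variables gives
\begin{equation*}
\tau(MLML)=\tau(M)^2\,\tau(L^2)+\tau(L)^2\,\tau(M^2)-\tau(M)^2\,\tau(L)^2 .
\end{equation*}
On the other hand, by the first identity of Theorem~\ref{th:convergence},
\begin{equation*}
\tau(MLML)=\mathbb{E}m_\sigma(M,L,M,L)=\sum_{p'\in[id_4,\sigma]_{\mathcal{P}_4}}\mathbb{E}\kappa_{p'}(M,L,M,L);
\end{equation*}
$\mathcal{P}$-freeness kills every term whose partition $p'$ has a cycle meeting both an $M$-strand $\{1,1',3,3'\}$ and an $L$-strand $\{2,2',4,4'\}$, and for the surviving $p'$ the tensor-factorisation axiom --- applied after relabelling the strands by the transposition $(2\,3)$, which turns $MLML$ into $MMLL$ --- factors $\mathbb{E}\kappa_{p'}$ into an $M$-cumulant times an $L$-cumulant.

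The next step is to list the monochromatic partitions of $[id_4,\sigma]_{\mathcal{P}_4}$ and evaluate their cumulants. One finds that the interval contains exactly six of them: the three ``permutation-type'' ones $id_4$, $(1\,3)$, $(2\,4)$; the two partitions obtained from ${\sf 0}_2$ on the pair of $M$-strands (resp. the $L$-strands) together with $id$ on the other two strands; and the partition $q$ whose only blocks are $\{1,3,1',3'\}$ and $\{2,4,2',4'\}$, which becomes ${\sf 0}_2\otimes{\sf 0}_2$ after the relabelling. Evaluating all six with the factorisation axiom and the identities $\mathbb{E}\kappa_{id_1}(M)=\tau(M)$, $\mathbb{E}\kappa_{id_2}(M)=\tau(M)^2$ (from the asymptotic $\mathcal{P}$-factorisation) and $\mathbb{E}\kappa_{(1\,2)}(M)=\tau(M^2)-\tau(M)^2-\mathbb{E}\kappa_{{\sf 0}_2}(M)$ (a consequence of the third identity of Theorem~\ref{th:convergence}), and summing, one gets
\begin{equation*}
\tau(MLML)=\tau(M)^2\,\tau(L^2)+\tau(L)^2\,\tau(M^2)-\tau(M)^2\,\tau(L)^2+\mathbb{E}\kappa_{{\sf 0}_2}(M)\,\mathbb{E}\kappa_{{\sf 0}_2}(L).
\end{equation*}
Comparing with the free value forces $\mathbb{E}\kappa_{{\sf 0}_2}(M)\,\mathbb{E}\kappa_{{\sf 0}_2}(L)=0$, contradicting the hypothesis; hence $M$ and $L$ are not free. (Alternatively, this is the instance of Theorem~3.7 of \cite{Gab2} attached to the partition ${\sf 0}_2$.)

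The step I expect to be the main obstacle is the combinatorial bookkeeping in the previous paragraph: proving that $[id_4,\sigma]_{\mathcal{P}_4}$ contains precisely those six monochromatic partitions (ruling out, e.g., the ``mixed'' ones such as a transposition on one pair of strands and ${\sf 0}_2$ on the other), and checking that the permutation-type contributions, after the cancellations involving $\mathbb{E}\kappa_{{\sf 0}_2}$, reassemble into the non-crossing-partition moment--cumulant expansion of $\tau(MLML)$. This is exactly where the interplay between the Cayley-geodesic order on $\mathfrak{S}_k$, the non-crossing partition lattice, and the tensor-factorisation axiom of $\mathcal{P}$-freeness does the real work; the asymptotic $\mathcal{P}$-factorisation hypothesis is used both to give meaning to ``free'' and to supply the multiplicativity identities in the cumulant evaluations.
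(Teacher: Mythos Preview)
The paper does not actually prove this lemma: it is stated as a direct specialisation of Theorem~3.7 of \cite{Gab2}, with no argument given in the present text. Your proposal is therefore not being compared to a proof in the paper but to an external citation, and you yourself note in your final parenthetical that your computation ``is the instance of Theorem~3.7 of \cite{Gab2} attached to the partition ${\sf 0}_2$''.

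That said, your argument is correct and is exactly the computation that underlies the cited theorem in this case: evaluating $\mathbb{E}m_{(1,2,3,4)}(M,L,M,L)$ via the $\mathcal{P}$-moment--cumulant relation, discarding the mixed-cycle terms by $\mathcal{P}$-freeness, factorising the six surviving monochromatic partitions of $[id_4,(1,2,3,4)]_{\mathcal{P}_4}$, and comparing with the free formula for $\tau(MLML)$. The algebra checks out; in particular the identity $\mathbb{E}\kappa_{(1,2)}(M)=\tau(M^2)-\tau(M)^2-\mathbb{E}\kappa_{{\sf 0}_2}(M)$ follows from $[id_2,(1,2)]_{\mathcal{P}_2}=\{id_2,(1,2),{\sf 0}_2\}$, and the six contributions telescope to the free value plus $\mathbb{E}\kappa_{{\sf 0}_2}(M)\,\mathbb{E}\kappa_{{\sf 0}_2}(L)$, as you claim. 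The one place to be careful --- which you correctly flag --- is the enumeration of monochromatic partitions in the geodesic interval: one must in particular check that partitions such as $(1\,3)(2\,4)$, or a transposition on one colour paired with ${\sf 0}_2$ on the other, fail the geodesic condition $d(id,p)+d(p,\sigma)=3$ (they give $5$ and $4$ respectively), so that your list of six is indeed exhaustive.
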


In \cite{Gab2}, we proved a general theorem about the convergence of matricial L\'{e}vy processes in $\mathcal{P}$-distribution. We will explain a consequence which is needed in this article. Let us consider for any $N$, $(X_t^{N})_{t \geq 0}$ a multiplicative real-matricial L\'{e}vy process which is invariant in law by conjugation by $\mathfrak{S}(N)$. Let us denote, for any positive integers $k$ and~$N$: 
\begin{align*}
G_k^{N} = \frac{d}{dt}_{|t=0}\mathbb{E}\left[\left(X_t^{N}\right)^{\otimes k}\right]. 
\end{align*}
The endormorphism $G_k^{N}$ is also in ${\sf Vect} \left\{\rho_N(p) | p \in \mathcal{P}_k\right\}$ and we can define the coordinate numbers of $G_k^{N}$, $\kappa_{p}(G_k^{N})$ by the same formula as Equation (\ref{eq:coord}). We can also define the moments of $G_k^{N}$, $m_{p}(G^{N}_k)$, by the same formula as Equation (\ref{eq:mom}), and the exclusive moments of $G_k^{N}$, $m_{p^{c}}(G^{N}_k)$ by replacing $p$ by $p^{c}$. We say that the sequence $(G^{N}_k)_{N \in \mathbb{N}}$ converges if and only if its coordinate numbers converge. This is equivalent to say that the $\mathcal{P}$-moments or the $\mathcal{P}$-exclusive moments of $(G^{N}_k)_{N \in \mathbb{N}}$ converge. If for any positive integers $k$ and $l$, for any partitions $p$ and $p'$ respectively in $\mathcal{P}_k$ and $\mathcal{P}_l$, 
\begin{align*}
\lim_{N \to \infty}m_{p\otimes p'}(G_{k+l}^{N}) = \lim_{N \to \infty} m_{p}(G_k^{N}) +  \lim_{N \to \infty} m_{p'}(G_l^{N}),
\end{align*}
we say that $(G_k^{N})_{k,N}$ {\em condensates weakly}. Recall the notion of multiplicative $\mathcal{P}$-L\'{e}vy process defined in Definition 2.9 of \cite{Gab2}. Let us state a direct consequence of Theorem $6.1$ and Remark 6.2 of \cite{Gab2}. 

\begin{theorem}
\label{th:convlevy}
Let us suppose that for any positive integer $k$, $(G_k^{N})_{N > 0}$ converges. The process $(X_t^{N})_{t \geq 0}$ converges in $\mathcal{P}$-distribution toward the $\mathcal{P}$-distribution of a $\mathcal{P}$-L\'{e}vy process. For any $p \in \mathcal{P}_k$, for any $t_0 \geq 0$, 
\begin{align*}
\frac{d}{dt}_{| t = t_0}\mathbb{E}\kappa_p(X_t) = \sum_{p_1 \in \mathcal{P}_k, p_2 \in \mathcal{P}_k | p_1\circ p_2 = p, p_1\prec p } \kappa_{p}(G_k) \mathbb{E}\kappa_{p_2}(X_{t_0}). 
\end{align*}
Besides, the process $(X_t^{N})_{t \geq 0}$ satisfies the asymptotic $\mathcal{P}$-factorization if and only if $(G^{N}_{k})_{k,N}$ weakly condensates. If so, it converges in probability in $\mathcal{P}$-distribution.
\end{theorem}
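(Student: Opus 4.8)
The plan is to deduce the statement from Theorem~6.1 and Remark~6.2 of \cite{Gab2}; the only real work is to present the process $(X_t^N)_{t\geq 0}$ in the form to which those results apply and then to transcribe their conclusions into the notation of the present article.

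First I would record the elementary fact that, since $(X_t^N)_{t\geq 0}$ is a multiplicative Lévy process, for each $k$ the family $t\mapsto \mathbb{E}\big[(X_t^N)^{\otimes k}\big]$ is a continuous one-parameter semigroup of endomorphisms of $(\mathbb{C}^N)^{\otimes k}$: stationarity and independence of the increments give the identity $\mathbb{E}[(X_{s+t}^N)^{\otimes k}] = \mathbb{E}[(X_s^N)^{\otimes k}]\,\mathbb{E}[(X_t^N)^{\otimes k}]$, with $\mathbb{E}[(X_0^N)^{\otimes k}] = \rho_N(id)$. By Schur--Weyl--Jones this semigroup stays in ${\sf Vect}\{\rho_N(p)\mid p\in\mathcal{P}_k\}$, hence equals $\exp(t\,G_k^N)$ with $G_k^N$ its generator at $t=0$. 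Moreover the whole $\mathcal{P}$-distribution of the process — the joint $\mathcal{P}$-moments at finitely many times $t_1<\dots<t_m$ — is determined by these semigroups: decomposing each $X_{t_j}^N$ into the increments $(X_{t_{i-1}}^N)^{-1}X_{t_i}^N$, their independence, stationarity and invariance in law by $\mathfrak{S}(N)$ reduce every joint moment to a universal expression in the coordinates of the $\exp(t_i\,G_k^N)$, exactly as in the process framework of \cite{Gab2}.

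Next, to prove convergence in $\mathcal{P}$-distribution it suffices by Theorem~\ref{th:convergence} to show that the finite-dimensional $\mathcal{P}$-cumulants $\mathbb{E}\kappa_p(X_t^N)$ converge for every fixed $t$ and $p$ (and likewise their joint analogues at several times). I would differentiate the semigroup, $\frac{d}{dt}\mathbb{E}[(X_t^N)^{\otimes k}] = G_k^N\,\mathbb{E}[(X_t^N)^{\otimes k}]$, expand both sides in the basis $\{\rho_N(p)\}$ via the multiplication rule $\rho_N(p)\rho_N(p') = N^{\kappa(p,p')}\rho_N(p\circ p')$, and read off the coordinate of $\rho_N(p)$ after the normalization of Equation~(\ref{eq:coord}). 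This produces a linear ODE for the vector $(\mathbb{E}\kappa_p(X_t^N))_{p\in\mathcal{P}_k}$ whose coefficients are polynomials in the $\kappa_q(G_k^N)$ and in powers of $N$, with initial value $(\delta_{p,id})_{p}$. The combinatorial core, which I expect to be the main obstacle, is to check — using the distance and geodesic order on $\mathcal{P}_k$ from \cite{Gab1}, together with the behaviour of ${\sf nc}$ and of $\kappa(\cdot,\cdot)$ under $\circ$ — that all the powers of $N$ that occur are $\leq 0$, so that no coefficient explodes, and that the only terms surviving as $N\to\infty$ are those indexed by pairs $(p_1,p_2)$ with $p_1\circ p_2 = p$ and $p_1\prec p$. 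Granting this, and using the hypothesis that each $\kappa_q(G_k^N)$ converges, continuous dependence of the solution of a linear ODE on its coefficients yields the convergence of $\mathbb{E}\kappa_p(X_t^N)$ and, in the limit, the stated differential equation for $\mathbb{E}\kappa_p(X_t)$. The limiting cumulant data inherits the semigroup structure, so by Definition~2.9 of \cite{Gab2} it is the $\mathcal{P}$-distribution of a multiplicative $\mathcal{P}$-Lévy process; this is where Remark~6.2 of \cite{Gab2} enters.

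Finally, for the characterization of asymptotic $\mathcal{P}$-factorization I would, for $p_1\in\mathcal{P}_k$ and $p_2\in\mathcal{P}_l$, compare two linear evolutions: the one obeyed by $\mathbb{E}m_{p_1\otimes p_2}(X_t^N)$ inside the $G_{k+l}^N$-driven system, using $\rho_N(p_1\otimes p_2)=\rho_N(p_1)\otimes\rho_N(p_2)$ and $(X_t^N)^{\otimes(k+l)} = (X_t^N)^{\otimes k}\otimes(X_t^N)^{\otimes l}$, and the one obeyed by the product $\mathbb{E}m_{p_1}(X_t^N)\,\mathbb{E}m_{p_2}(X_t^N)$, obtained from the Leibniz rule and the generators $G_k^N$ and $G_l^N$. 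Both limiting equations share the initial value $\mathbb{E}m_{p_1}(id)\,\mathbb{E}m_{p_2}(id)$, and a short computation shows that their right-hand sides coincide in the limit precisely when $\lim_N m_{p\otimes p'}(G_{k+l}^N) = \lim_N m_p(G_k^N) + \lim_N m_{p'}(G_l^N)$ for all $p,p'$, i.e.\ when $(G_k^N)_{k,N}$ weakly condensates; uniqueness for linear ODEs then forces $\mathbb{E}m_{p_1\otimes p_2}(X) = \mathbb{E}m_{p_1}(X)\,\mathbb{E}m_{p_2}(X)$, and conversely. In that case Theorem~\ref{th:conv} upgrades convergence in $\mathcal{P}$-distribution to convergence in probability. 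Assembling these steps is exactly the content of Theorem~6.1 together with Remark~6.2 of \cite{Gab2}, of which the present statement is the announced consequence.
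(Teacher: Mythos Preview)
Your proposal is correct and matches the paper's approach exactly: the paper does not give an independent proof of this theorem but states it as a direct consequence of Theorem~6.1 and Remark~6.2 of \cite{Gab2}, which is precisely what you invoke. Your additional sketch of the semigroup/ODE mechanism behind those results and of the weak-condensation equivalence is accurate and goes beyond what the paper itself records here, but the route is the same.
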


We will not explain the definition of the notation $p_1 \prec p$: the only important fact to know for this article is Lemma $3.6$ of \cite{Gab1} which states that for any $\sigma \in \mathfrak{S}_k$: 
\begin{align*}
\{ p' \in \mathcal{P}_k | p' \prec \sigma \} =[id, \sigma]_{\mathcal{P}_k} \cap \mathfrak{S}_k.  
\end{align*}

\subsection{General theorems of convergence}
\label{generaltheorem}

In this section, we state the general theorems about convergence of random walks on the symmetric group that we are going to prove in this article. The proofs will be given in Section \ref{proof}. 

Let $N$ be a positive integer, let us consider $\lambda_N$ a conjugacy class of $\mathfrak{S}(N)$, the symmetric group on $N$ elements. We denote by $\#\lambda_N$ the size of the conjugacy class $\lambda_N$. Let $\sigma$ be in $\lambda_N$ and let $i $ be in $\{1,...,N\}$. For any $k \in \{1,...,N\}$, the {\em period} of $k$ in $\sigma$ is the smallest positive integer $n$ such that $\sigma^{n}(k)=k$. We denote by $\lambda_{N}(i)$ the number of elements in $\{1,...,N\}$ which period in $\sigma$ is equal to $i$: this number does not depend on the choice of $\sigma$. Thus, we can see $\lambda_N$ as a way to decompose the integer $N$: $\lambda_N$ can be coded by the sequence $\left(\lambda_N(i)\right)_{i = 1}^{\infty}$ which satisfies $\sum_{i=1}^{\infty} \lambda_N(i)= N.$

\begin{definition}
\label{simplewalk}
We define the $\lambda_N$-random walk on $\mathfrak{S}(N)$, denoted by $\left(S_{t}^{N}\right)_{t \geq 0}$, as the Markov process on $\mathfrak{S}(N)$ such that $S_0^{N} = id_N$ and whose generator is given by:  
\begin{align*}
\forall f \in \mathbb{R}^{\mathfrak{S}(N)}, \forall \sigma_0 \in \mathfrak{S}(N), H_{N} f (\sigma_0)= \frac{N}{\lambda_N(1^{c})}\frac{1}{\# \lambda_N} \sum_{\sigma \in \lambda_N} \big[ f(\sigma \sigma_0) - f(\sigma_0) \big], 
\end{align*}
where we used the following notation: 
\begin{align*}
\lambda_N(1^{c}) = N-\lambda_N(1). 
\end{align*}
\end{definition}

This random walk is invariant in law by conjugation by $\mathfrak{S}(N)$. 

\begin{lemma}
\label{invariance}
Let $\sigma$ be in $\mathfrak{S}(N)$, let $\left(S_{t}^{N}\right)_{t \geq 0}$ be a $\lambda_N$-random walk on $\mathfrak{S}(N)$. We have the equality in law: $\left(\sigma S_{t}^{N} \sigma^{-1} \right)_{t \geq 0}= \left( S_{t}^{N}\right)_{t \geq 0}.$
\end{lemma}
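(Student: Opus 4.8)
The plan is to invoke the standard fact that a continuous-time Markov chain on a finite state space is determined in law by its initial distribution together with its infinitesimal generator, and then to check that both of these data are left unchanged by conjugation.

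Fix $\sigma \in \mathfrak{S}(N)$ (the conjugating element of the statement) and let $\Phi : \mathfrak{S}(N) \to \mathfrak{S}(N)$, $\Phi(\sigma_0) = \sigma \sigma_0 \sigma^{-1}$, be the corresponding bijection of the state space. Since $\Phi$ is a bijection of a finite set, the process $\left(\sigma S_t^{N} \sigma^{-1}\right)_{t\geq 0} = \left(\Phi(S_t^{N})\right)_{t \geq 0}$ is again a continuous-time Markov chain on $\mathfrak{S}(N)$; its initial value is $\Phi(id_N) = id_N$, and its generator is $\widetilde{H}_N f = \big( H_N(f \circ \Phi) \big) \circ \Phi^{-1}$. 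So it is enough to prove that $\widetilde{H}_N = H_N$.

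This reduces to a direct computation from Definition \ref{simplewalk}: expanding $\widetilde{H}_N f (\sigma_0)$, the two copies of $\Phi$ combine and one is left with
\begin{align*}
\widetilde{H}_N f(\sigma_0) = \frac{N}{\lambda_N(1^{c})}\frac{1}{\# \lambda_N} \sum_{\tau \in \lambda_N} \big[ f(\sigma \tau \sigma^{-1} \sigma_0) - f(\sigma_0) \big].
\end{align*}
Now comes the only point that matters: $\lambda_N$ is a conjugacy class, hence $\tau \mapsto \sigma \tau \sigma^{-1}$ is a bijection of $\lambda_N$ onto itself; reindexing the sum by $\tau' = \sigma \tau \sigma^{-1}$ turns the right-hand side into $H_N f (\sigma_0)$, so $\widetilde{H}_N = H_N$ and the two processes have the same law. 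Alternatively, and perhaps more transparently, one can use the explicit description of the $\lambda_N$-random walk as $S_t^{N} = \xi_{n(t)} \cdots \xi_1$, where $(n(t))_{t \geq 0}$ is a Poisson process of rate $N / \lambda_N(1^{c})$ and the $\xi_i$ are i.i.d., uniform on $\lambda_N$, and independent of $(n(t))_{t \geq 0}$; then $\sigma S_t^{N} \sigma^{-1} = (\sigma \xi_{n(t)} \sigma^{-1}) \cdots (\sigma \xi_1 \sigma^{-1})$, and since conjugation by $\sigma$ preserves both the uniform law on $\lambda_N$ and the independence of the $\xi_i$, the conjugated sequence has the same joint law as $(\xi_i)_i$, which gives the claim. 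There is no genuine obstacle in this argument; its whole content is the stability of $\lambda_N$ under conjugation, which is also why the precise value of the (conjugacy-class-dependent) normalising constant $N/\lambda_N(1^{c})$ is irrelevant.
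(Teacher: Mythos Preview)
Your proof is correct and follows the same approach as the paper, which simply states that the result is a consequence of the generator of $(S_t^N)_{t\geq 0}$ being invariant under conjugation by $\mathfrak{S}(N)$. You have filled in the details of that one-line argument (and added a harmless alternative via the jump construction), but the key idea---stability of the conjugacy class $\lambda_N$ under conjugation---is identical.
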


\begin{proof}
It is a consequence of the fact that the generator of $\left(S_{t}^N\right)_{t \geq 0}$ is invariant by conjugation by $\mathfrak{S}(N)$.
\qed \end{proof}

Let us consider a sequence $\left(\lambda_N\right)_{N \in \mathbb{N}^{*}}$ such that, for any positive integer $N$, $\lambda_N$ is a conjugacy class of $\mathfrak{S}(N)$.
\begin{definition}
\label{convergencedef}
 The sequence $(\lambda_N)_{N \in \mathbb{N}}$ converges if and only if there exists: 
 \begin{align*}
 \left( \lambda(i)\right)_{i \geq 2} \in \left\{ \left(a_i\right)_{i \in \mathbb{N} \setminus \{0,1\}} | \forall i \geq 2, a_i \geq 0, \sum_{i=2}^{\infty} a_i \leq 1\right\}
 \end{align*}
  such that for any integer $i \geq 2$: 
\begin{align*}
\frac{\lambda_N(i)}{\lambda_{N}(1^c)} \underset{N \to \infty}{\longrightarrow} \lambda(i),
\end{align*}
and there exists $\alpha \in [0,1]$ such that: 
\begin{align*}
\frac{\lambda_N(1)}{N}  \underset{N \to \infty}{\longrightarrow} 1-\alpha. 
\end{align*}
We will denote by $\lambda(1)$ the value of $1-\alpha$. The sequence $(\lambda_N)_{N \in \mathbb{N}}$ is {\em evanescent} if $\alpha = 0$ and it is {\em macroscopic} if $\alpha >0$. 
\end{definition}

For any positive integer $N$, let us consider $\left(S_t^{N}\right)_{t \geq 0}$ a $\lambda_{N}$-random walk on $\mathfrak{S}(N)$. Recall the notion of exclusive moments $\mathbb{E}m_{\sigma^{c}}$. From now on, let us suppose that the sequence $(\lambda_N)_{N \in \mathbb{N}}$ converges. 

\begin{theorem}\label{convergence1}
For any $t \geq 0$, the {\em mean} eigenvalue distributions of $\left(S_{t}^{N}\right)_{N \in \mathbb{N}}$ converge as $N$ goes to infinity to a probability measure $\overline{\mu}_{t}^{\lambda}$ which has the form: 
\begin{align*}
\overline{\mu}_{t}^{\lambda} = \sum_{n \in \mathbb{N}^{*}} \sum_{k=0}^{n-1} \frac{m_{n^c}(t)}{n}\delta_{e^{\frac{2ik\pi}{n}}} + m_{\infty^{c}}(t) \lambda_{\mathbb{U}}, 
\end{align*}
with $m_{\infty^{c}}(t) = 1-\sum_{k=0}^{\infty} m_{n^{c}}(t) \geq 0$,  $m_{n^{c}}(t) \geq 0$ for any integer $n$ and $\lambda_{\mathbb{U}}$ is the uniform probability measure on the unit circle $\mathbb{U}$.

We denote by  $[\sigma]$ the conjugacy class of $\sigma$ which, as already noticed, can be seen as a decomposition of $N$. Also, by convention, for any positive integer $k$: $\frac{\left((1-0)^{k} - 1\right)}{0} = -k$. Let us consider the unique solution $\left(\left(m_{\sigma^{c}}(t)\right)_{\sigma \in \cup_k \mathfrak{S}_k}\right)_{t \geq 0}$ of the system of differential equations: 

$\forall k \in \mathbb{N}^{*}$, $\forall \sigma_0 \in \mathfrak{S}_k$, $\forall t_0 \geq 0$: 
\begin{align*}
\frac{d}{dt}_{| t = t_0}\!\!\!\! m_{\sigma_0^{c}}(t)\!= \!\frac{\left((1-\alpha)^{k} - 1\right)}{\alpha}&m_{\sigma_0^{c}}(t_0) \\ &\!\!\!\!\!\!\!\!\!\!\!\!\!\!\!\!\!\!\!\!\!\!\!\!\!\!\!\!\!\!\!\!\!\!\!\!\!\!\!\!\!\!\!\!\!\!\!\!\!\!+\!\!\!\sum_{ \sigma \in \mathfrak{S}_k \setminus \{id\},  \sigma \leq \sigma_0} \!\!\!\!\!\!\!\!\!\!\!\!\! \alpha^{{\sf nc}(\sigma \vee id) - [\sigma](1) - 1}\! \left(\prod_{i=2}^{k} \left(\lambda(i)\right)^{\frac{[\sigma](i)}{i}} \right)\!\! (1-\alpha)^{[\sigma](1)} m_{(^{t}\sigma \sigma_0)^{c}}(t_0), 
\end{align*}
with the initial conditions: $ \forall k \in \mathbb{N}^{*}, \forall \sigma \in \mathfrak{S}_k, 
m_{\sigma^{c}}(0) = \delta_{\sigma = id}.$
Then for any positive integer $n$, for any real $t \geq 0$: 
\begin{align*}
m_{n^{c}}(t) = m_{(1,...,n)^{c}}(t), 
\end{align*}
where $(1,...,n) \in \mathfrak{S}_n$ is a $n$-cycle. Besides for any integer $k$, any $t \geq 0$ and any $\sigma \in \mathfrak{S}_k$: 
\begin{align*}
m_{\sigma^{c}}(t) = \lim_{N \to \infty}\mathbb{E}m_{\sigma^{c}}\left[S_t^{N}\right].
\end{align*}
\end{theorem}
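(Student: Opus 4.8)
The plan is to realize the $\lambda_N$-random walk $(S_t^N)_{t\ge 0}$ as a multiplicative real-matricial L\'evy process invariant in law by conjugation by $\mathfrak{S}(N)$, and to apply Theorems \ref{th:convlevy} and \ref{th:conv}. First I would compute the generator $G_k^N = \frac{d}{dt}_{|t=0}\mathbb{E}[(S_t^N)^{\otimes k}]$ explicitly. Since the walk jumps by left-multiplication by a uniform element $\sigma$ of $\lambda_N$ at rate $\tfrac{N}{\lambda_N(1^c)}$, one gets
\begin{align*}
G_k^N = \frac{N}{\lambda_N(1^c)}\left(\frac{1}{\#\lambda_N}\sum_{\sigma\in\lambda_N}\rho_N(\sigma)^{\otimes k} - \rho_N(id_k)\right),
\end{align*}
where $\rho_N(\sigma)$ is the permutation matrix. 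The key combinatorial input is to expand the average $\frac{1}{\#\lambda_N}\sum_{\sigma\in\lambda_N}\sigma^{\otimes k}$, viewed as an element of $\mathbb{C}[\mathcal{P}_k(N)]$, in the basis $\{\rho_N(p)\}_{p\in\mathcal{P}_k}$, i.e.\ to identify the coordinate numbers $\kappa_p(G_k^N)$. For a permutation $\sigma_0\in\mathfrak{S}_k$ with cycle type $[\sigma_0]$, a direct count of how many $\sigma\in\lambda_N$ act on $k$ fixed labels as a prescribed pattern yields, after dividing by $\#\lambda_N$ and $\lambda_N(1^c)/N$, exactly the coefficients appearing in the statement: the exponents ${\sf nc}(\sigma\vee id)-[\sigma](1)-1$ on $\alpha$, the products $\prod_{i\ge 2}(\lambda(i))^{[\sigma](i)/i}$, and the factors $(1-\alpha)^{[\sigma](1)}$. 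This is where the hypothesis that $(\lambda_N)_N$ converges is used: the limiting coordinate numbers $\kappa_\sigma(G_k)$ exist precisely because $\lambda_N(i)/\lambda_N(1^c)\to\lambda(i)$ and $\lambda_N(1)/N\to 1-\alpha$. The diagonal term $\frac{(1-\alpha)^k-1}{\alpha}$ (with the stated convention at $\alpha=0$) comes from the ``$-\rho_N(id_k)$'' part together with the $\sigma=id$ contribution, and one checks that the off-diagonal coordinate numbers $\kappa_p(G_k)$ vanish when $p\notin\mathfrak{S}_k$.

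Once convergence of $(G_k^N)_{N>0}$ is established, Theorem \ref{th:convlevy} gives convergence of $(S_t^N)_{t\ge 0}$ in $\mathcal{P}$-distribution toward a $\mathcal{P}$-L\'evy process, and it furnishes the differential equation
\begin{align*}
\frac{d}{dt}_{|t=t_0}\mathbb{E}\kappa_p(S_t) = \sum_{p_1,p_2\in\mathcal{P}_k\,|\,p_1\circ p_2 = p,\ p_1\prec p}\kappa_p(G_k)\,\mathbb{E}\kappa_{p_2}(S_{t_0}).
\end{align*}
Next I would pass from cumulants to exclusive moments. By Theorem \ref{th:convergence}, for $\sigma\in\mathfrak{S}_k$ one has $\mathbb{E}m_{\sigma^c}(S_t) = \mathbb{E}\kappa_\sigma(S_t)$, so it suffices to track the cumulants indexed by permutations. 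Using that $S_t^N$ is orthogonal, hence $\kappa_p$ is supported on partitions below permutations, and using Lemma $3.6$ of \cite{Gab1} to rewrite $\{p'\prec\sigma\} = [id,\sigma]_{\mathcal{P}_k}\cap\mathfrak{S}_k$, the L\'evy equation restricts to a closed system on $(\mathbb{E}\kappa_{\sigma^c}(S_t))_{\sigma\in\cup_k\mathfrak{S}_k}$; substituting the values of $\kappa_\sigma(G_k)$ found in the first step and reindexing $p_1\circ p_2 = \sigma$ via $p_2 = {}^t p_1\circ\sigma$ (valid for $p_1\in\mathfrak{S}_k$) produces precisely the system of differential equations in the statement, with the sum running over $\sigma\in\mathfrak{S}_k\setminus\{id\}$ with $\sigma\le\sigma_0$ and the term $m_{({}^t\sigma\sigma_0)^c}(t_0)$. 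The initial condition $m_{\sigma^c}(0)=\delta_{\sigma=id}$ is immediate from $S_0^N = id_N$. Existence and uniqueness of the solution follow because the system is linear with locally bounded coefficients, triangular with respect to the partial order $\le$ on permutations (the right-hand side for $\sigma_0$ involves only $m_{\tau^c}$ with $\tau\le\sigma_0$ or $\tau$ related to $\sigma_0$ of no larger complexity), so it can be solved level by level.

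To extract the eigenvalue distribution I would note that for an orthogonal element the mean eigenvalue distribution $\overline\mu_t^\lambda$ is determined by its moments $\int z^n\,d\overline\mu_t^\lambda(z) = \lim_N \mathbb{E}m_{(1,\dots,n)^c}(S_t^N) = m_{n^c}(t)$, by the discussion following Theorem \ref{th:conv} (and by the identity $\mathbb{E}m_{\sigma^c}$ depending only on the cycle type of $\sigma$, which gives $m_{n^c}(t)=m_{(1,\dots,n)^c}(t)$). The stated form of $\overline\mu_t^\lambda$ as $\sum_n\sum_{k=0}^{n-1}\frac{m_{n^c}(t)}{n}\delta_{e^{2ik\pi/n}} + m_{\infty^c}(t)\lambda_{\mathbb{U}}$ then follows because a probability measure on $\mathbb{U}$ all of whose moments are supported on the sublattice structure of roots of unity (each permutation of a cyclic block of length $n$ contributing $n$-th roots of unity) must be of this shape; positivity of the $m_{n^c}(t)$ and of $m_{\infty^c}(t)=1-\sum_n m_{n^c}(t)$ is inherited from the fact that each $S_t^N$ is a genuine probability measure on permutation matrices and the convergence is of probability measures. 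The main obstacle is the first step: carrying out the combinatorial expansion of $\frac{1}{\#\lambda_N}\sum_{\sigma\in\lambda_N}\sigma^{\otimes k}$ in the partition basis and verifying that the resulting coefficients, after normalization and passage to the limit, match exactly the exponents and products in the displayed differential equation — in particular getting the powers of $\alpha$ right and handling the $\alpha=0$ degeneration via the stated convention requires care. The remaining steps are bookkeeping built on the quoted theorems.
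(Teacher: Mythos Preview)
Your overall architecture---compute the generator $G_k^N$, show it converges, apply Theorem \ref{th:convlevy}, then restrict the cumulant evolution to permutations via Lemma~3.6 of \cite{Gab1} and Theorem \ref{th:convergence}---matches the paper's. But two concrete claims in your proposal are wrong, and one methodological choice makes your life harder than necessary.

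First, the assertion that ``the off-diagonal coordinate numbers $\kappa_p(G_k)$ vanish when $p\notin\mathfrak{S}_k$'' is false. Theorem \ref{th:logcum} later in the paper computes these log-cumulants explicitly and shows they are nonzero for many non-permutation partitions (e.g.\ $\kappa_{{\sf 0}_2}(G_2)\neq 0$). Fortunately you do not need this claim: the closure of the differential system on permutations comes entirely from the structural fact that $p_1\circ p_2\in\mathfrak{S}_k$ forces $p_1,p_2\in\mathfrak{S}_k$, together with $\{p'\prec\sigma\}=[id,\sigma]_{\mathcal{P}_k}\cap\mathfrak{S}_k$, both of which you already invoke. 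Drop the vanishing claim.

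Second, and more seriously, your identification of the moments of $\overline{\mu}_t^\lambda$ is incorrect. You write $\int z^n\,d\overline{\mu}_t^\lambda = m_{n^c}(t)$, but $\frac{1}{N}Tr(S^n)$ equals $m_{(1,\dots,n)}(S)$, the ordinary $p$-moment, not the exclusive one. The paper uses Proposition \ref{lienmomentetexclu} to get $\int z^n\,d\overline{\mu}_t^\lambda=\sum_{d\mid n} m_{d^c}(t)$, and then Lemma \ref{momentprob} turns this divisor-sum moment sequence into exactly the displayed measure $\sum_n\sum_{k=0}^{n-1}\tfrac{m_{n^c}(t)}{n}\delta_{e^{2ik\pi/n}}+m_{\infty^c}(t)\lambda_{\mathbb{U}}$. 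Without this step your argument does not produce the stated form.

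Finally, on method: to apply Theorem \ref{th:convlevy} you must show that $\kappa_p(G_k^N)$ converges for \emph{every} $p\in\mathcal{P}_k$, not only for permutations, and you give no mechanism for the non-permutation case beyond the incorrect vanishing claim. The paper sidesteps this by computing the \emph{exclusive moments} $m_{p^c}(G_k^N)$ instead: Proposition \ref{critereconv} shows $m_{p^c}(\sigma)=0$ unless $p$ is a parure, reduces necklaces and chains to cycle exclusive moments, and then a telescoping product handles reducible partitions. This is both cleaner and what the paper itself flags (end of Section \ref{sec:logcu}) as the tractable route compared to computing coordinate numbers directly.
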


In fact, in the proof of Theorem \ref{convergence1}, we will show that the eigenvalue distributions of $\left( S_{t}^{N}\right)_{ N \in \mathbb{N}}$ converge in law as $N$ goes to infinity to a random measure: depending on the behavior of $(\lambda_N)_{N \in \mathbb{N}}$, one can know if the limit is or is not random. 

\begin{theorem}
\label{converfac}
Let $t$ be a positive real. The eigenvalue distribution $\mu_{t}^{\lambda_N}$ of $S_t^{N}$ converges in law to a random measure on $\mathbb{U}$, denoted by $\mu_{t}^{\lambda}$. Two behaviors are possible: 
\begin{enumerate}
\item if the sequence $(\lambda_N)_{N \in \mathbb{N}}$ is evanescent then the limiting measure is a non-random measure on $\mathbb{U}$, $\mu_{t}^{\lambda} = \overline{\mu}_{t}^{\lambda}$, and the convergence holds in probability. The family $\left(S_{t}^{N}\right)_{N \in \mathbb{N}}$ satisfies the asymptotic $\mathcal{P}$-factorization and thus it converges in probability in $\mathcal{P}$-distribution, 

\item if the sequence $(\lambda_N)_{N \in \mathbb{N}}$ is macroscopic, then the limiting measure is not a non-random measure on $\mathbb{U}$ and the family $\left(S_{t}^{N}\right)_{N \in \mathbb{N}}$ does not satisfy the asymptotic $\mathcal{P}$-factorization. 
\end{enumerate}
\end{theorem}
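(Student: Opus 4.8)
The plan is to realise $\left(S_t^N\right)_{N}$ as a conjugation-invariant multiplicative real-matricial L\'evy process and to feed it into Theorem \ref{th:convlevy}; all the work is in understanding the generators $G_k^N$. Since the $\lambda_N$-random walk jumps, at rate $N/\lambda_N(1^{c})$, by left multiplication by an element $\gamma$ drawn uniformly in $\lambda_N$, one has $\mathbb{E}\left[(S_t^N)^{\otimes k}\right] = e^{tG_k^N}$ with
\begin{align*}
G_k^N = \frac{N}{\lambda_N(1^{c})}\Bigl(\mathbb{E}\left[\gamma^{\otimes k}\right] - \rho_N(id_k)\Bigr).
\end{align*}
Two elementary facts are needed, both coming from the Schur--Weyl--Jones duality: for every $q\in\mathcal{P}_k$ one has $m_q\left(\rho_N(id_k)\right) = 1$ (indeed $\rho_N(id_k)$ is the identity operator and $m_q$ of it equals $N^{{\sf nc}({}^{t}q\vee id)}/N^{{\sf nc}(q\vee id)} = 1$); and for every $p\in\mathcal{P}_k$ the observable $m_p(\gamma)$ is deterministic, i.e.\ constant on the conjugacy class $\lambda_N$, because $\rho_N({}^{t}p)$ commutes with $\tau^{\otimes k}$ for all $\tau\in\mathfrak{S}(N)$, so $m_p$ is a class function. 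Combining these with the linearity of $m_{\bullet}$, with the fact that the trace of a tensor product of operators factorises, and with $\mathbb{E}\left[\gamma^{\otimes(k+l)}\right] = \tfrac{1}{\#\lambda_N}\sum_{\sigma\in\lambda_N}\sigma^{\otimes k}\otimes\sigma^{\otimes l}$, one obtains, for $p\in\mathcal{P}_k$ and $p'\in\mathcal{P}_l$,
\begin{align*}
m_p(G_k^N) = \frac{N}{\lambda_N(1^{c})}\bigl(m_p(\gamma) - 1\bigr), \qquad m_{p\otimes p'}(G_{k+l}^N) = m_p(G_k^N) + m_{p'}(G_l^N) + \frac{\lambda_N(1^{c})}{N}\,m_p(G_k^N)\,m_{p'}(G_l^N).
\end{align*}
Since $(\lambda_N)_{N}$ converges, each $m_p(\gamma)$, hence each $m_p(G_k^N)$, converges as $N\to\infty$ to an explicit expression in the $\lambda(i)$ and $\alpha = 1-\lambda(1)$ (for instance $m_{id_1}(G_1^N) = -1$ for every $N$); so $(G_k^N)_N$ converges for each $k$, and Theorem \ref{th:convlevy} yields that $\left(S_t^N\right)_N$ converges in $\mathcal{P}$-distribution toward a $\mathcal{P}$-L\'evy process. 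In particular every $\mathbb{E}m_{q}(S_t^N)$ converges.

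\noindent Next I would translate this to the eigenvalue distribution. Each $S_t^N$ is a random permutation matrix, hence orthogonal, so $\mu_t^{\lambda_N}$ is a probability measure on $\mathbb{U}$ invariant under $z\mapsto\bar z$; writing $c_n\in\mathfrak{S}_n\subset\mathcal{P}_n$ for an $n$-cycle one has $\int z^{n}\,d\mu_t^{\lambda_N} = m_{c_n}(S_t^N)$ and, for $n_1,\dots,n_r\ge1$, $\mathbb{E}\bigl[\prod_{j}\int z^{n_j}\,d\mu_t^{\lambda_N}\bigr] = \mathbb{E}m_{c_{n_1}\otimes\cdots\otimes c_{n_r}}(S_t^N)$, which converges by the previous step. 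Since the $\mu_t^{\lambda_N}$ are all carried by the compact set $\mathbb{U}$, convergence of all such mixed moments forces, by the method of moments for random measures on a compact set, $\mu_t^{\lambda_N}$ to converge in law to a random probability measure $\mu_t^{\lambda}$ on $\mathbb{U}$.

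\noindent For the dichotomy I would use the equivalence ``asymptotic $\mathcal{P}$-factorization $\Longleftrightarrow$ weak condensation of $(G_k^N)_{k,N}$'' of Theorem \ref{th:convlevy}. The tensor identity above gives
\begin{align*}
\lim_{N}m_{p\otimes p'}(G_{k+l}^N) = \lim_{N}m_p(G_k^N) + \lim_{N}m_{p'}(G_l^N) + \alpha\,\bigl(\lim_{N}m_p(G_k^N)\bigr)\bigl(\lim_{N}m_{p'}(G_l^N)\bigr),
\end{align*}
so weak condensation holds if and only if this last term vanishes for all $p,p'$. If $(\lambda_N)_N$ is evanescent then $\alpha = 0$ and this is automatic, so $\left(S_t^N\right)_N$ satisfies the asymptotic $\mathcal{P}$-factorization, hence (Theorem \ref{th:convlevy}) converges in probability in $\mathcal{P}$-distribution, hence (Theorem \ref{th:conv}) every $m_{c_n}(S_t^N)$ converges in probability to $\lim_N\mathbb{E}m_{c_n}(S_t^N)$, so $\mu_t^{\lambda_N}\to\mu_t^{\lambda}$ in probability and $\mu_t^{\lambda}$ is the non-random measure with those moments, namely the limit $\overline\mu_t^{\lambda}$ of the mean eigenvalue distributions of Theorem \ref{convergence1}. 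If $(\lambda_N)_N$ is macroscopic then $\alpha>0$, and taking $p = p' = id_1$ gives $\lim_N m_{id_2}(G_2^N) = -2+\alpha \ne -2 = 2\lim_N m_{id_1}(G_1^N)$, so weak condensation, hence the asymptotic $\mathcal{P}$-factorization, fails. It remains to see that $\mu_t^{\lambda}$ is then genuinely random, which I would do by computing one variance: since $m_{c_1}(S_t^N) = \int z\,d\mu_t^{\lambda_N}$ is real and $m_{id_1}(M)^2 = m_{id_1\otimes id_1}(M) = m_{id_2}(M)$, the variance of $\int z\,d\mu_t^{\lambda}$ equals $\lim_N\bigl(\mathbb{E}m_{id_2}(S_t^N) - (\mathbb{E}m_{id_1}(S_t^N))^2\bigr)$. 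As $id_1$ and $id_2$ coincide with the reference partitions in $\mathcal{P}_1,\mathcal{P}_2$ one has $[id,id_j]_{\mathcal{P}_j} = \{id_j\}$, so Theorem \ref{th:convergence} gives $\lim_N\mathbb{E}m_{id_j}(S_t^N) = \lim_N\mathbb{E}\kappa_{id_j}(S_t^N)$; in the cumulant differential equation of Theorem \ref{th:convlevy} for $p = id_j$ only the term $p_1 = p_2 = id_j$ survives (since $id_j$ is neutral and $\{p'\prec id_j\} = [id,id_j]_{\mathcal{P}_j}\cap\mathfrak{S}_j = \{id_j\}$), whence $\mathbb{E}\kappa_{id_j}(X_t) = e^{t\,m_{id_j}(G_j)}$, i.e.\ $\lim_N\mathbb{E}m_{id_1}(S_t^N) = e^{-t}$ and $\lim_N\mathbb{E}m_{id_2}(S_t^N) = e^{t(\alpha-2)}$. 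Therefore the variance of $\int z\,d\mu_t^{\lambda}$ equals $e^{t(\alpha-2)} - e^{-2t} = e^{-2t}\bigl(e^{\alpha t}-1\bigr)$, which is strictly positive for $t>0$, so $\mu_t^{\lambda}$ is not a non-random measure.

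\noindent The main obstacle is establishing the exact tensor identity for the generators, $m_{p\otimes p'}(G_{k+l}^N) = m_p(G_k^N) + m_{p'}(G_l^N) + \tfrac{\lambda_N(1^{c})}{N}m_p(G_k^N)m_{p'}(G_l^N)$: once the two auxiliary facts ($m_q(\rho_N(id_k)) = 1$ and $m_p(\gamma)$ deterministic) are in place, this identity drives both halves of the dichotomy --- the equivalence ``$\alpha = 0 \Longleftrightarrow$ weak condensation'' and, through the $id_j$-cumulant equations, the explicit strictly positive variance in the macroscopic case. A secondary point requiring care is checking that convergence in law of the random measures $\mu_t^{\lambda_N}$ on $\mathbb{U}$ genuinely follows from the convergence of all mixed cycle-moments.
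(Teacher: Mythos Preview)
Your proposal is correct and follows essentially the same route as the paper: both arguments hinge on Theorem~\ref{th:convlevy}, reducing the dichotomy to whether $(G_k^N)_{k,N}$ weakly condensates, and both verify this by analysing the tensor behaviour of the generator moments and then checking the pair $(id_1,id_1)$ in the macroscopic case. Your exact finite-$N$ identity $m_{p\otimes p'}(G_{k+l}^N) = m_p(G_k^N) + m_{p'}(G_l^N) + \tfrac{\lambda_N(1^c)}{N}\,m_p(G_k^N)\,m_{p'}(G_l^N)$ is simply a two-factor repackaging of the paper's telescoping sum $m_{p_1\otimes\cdots\otimes p_r}(G_k^N) = \sum_i m_{p_i}(G_k^N)\prod_{l>i} m_{p_l}(\sigma_N)$ (both come from $m_p(G_k^N) = \tfrac{N}{\lambda_N(1^c)}(m_p(\sigma_N)-1)$ and the multiplicativity of $m_\bullet(\sigma_N)$ over tensor factors), and your explicit variance computation $e^{-2t}(e^{\alpha t}-1)>0$ is a concrete instance of the paper's statement that $\lim_N\mathbb{E}m_{id_2}[S_t^N]\ne(\lim_N\mathbb{E}m_{id_1}[S_t^N])^2$, which the paper obtains by appealing to the differential system of Theorem~\ref{convergence1} rather than the cumulant equation of Theorem~\ref{th:convlevy} directly. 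One small point to make explicit in your write-up: when you pass from the cumulant ODE to $\mathbb{E}\kappa_{id_j}(X_t)=e^{t\,m_{id_j}(G_j)}$ you are using $\kappa_{id_j}(G_j)=m_{id_j}(G_j)$, which holds because $[id,id_j]_{\mathcal{P}_j}=\{id_j\}$; and your claim that every $m_p(G_k^N)$ converges still rests on first checking this for \emph{irreducible} $p$ (as done in the proof of Theorem~\ref{convergence1}) before your tensor identity propagates it to general $p$.
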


When the sequence $(\lambda_N)_{N \in \mathbb{N}}$ is evanescent, we can compute explicitly the measure $\mu_t^{\lambda}$. Given Theorem \ref{convergence1}, we only need to compute its moments or $m_{n^{c}}(t)$. In the following theorem, we will use the same notations as in Theorem \ref{convergence1} and in Definition \ref{convergencedef}.

\begin{theorem}
\label{permutations}
Let us suppose that $(\lambda_N)_{N \in \mathbb{N}}$ is evanescent. Let $n$ be a positive integer and let $t \geq 0$. We have: 
\begin{align}
\label{equationexclu}
m_{n^{c}}(t) = e^{-nt} \sum_{k=0}^{n-1} t^{k} \frac{n^{k-1}}{k!} \sum_{(i_1,...,i_k)\in \mathbb{N}^{*}, \sum_{j=1}^{k}i_j = n-1} \prod_{j=1}^{k} \lambda(i_j+1). 
\end{align}
We used the usual conventions for the products and the sums, thus $m_{1^{c}}(t)= e^{-t}$. 
As a consequence, for any positive integer $n$, and any $t \geq 0$: 
\begin{align*}
m_{n}(t)\!:=\! \int_{z \in \mathbb{U}} \! z^{n} d\overline{\mu}_{t}^{\lambda}(z)\! =\! \sum_{d| n} e^{-dt} \sum_{k=0}^{d-1} t^{k} \frac{d^{k-1}}{k!} \!\!\sum_{(i_1,...,i_k)\in \mathbb{N}^{*}, \sum_{j=1}^{k}i_j = d-1} \prod_{j=1}^{k} \!\lambda(i_j+1). 
\end{align*} 
\end{theorem}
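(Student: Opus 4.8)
\emph{Proof proposal.} The plan is to start from Theorem~\ref{convergence1}, which already reduces the computation of the limiting exclusive moments $m_{\sigma^{c}}(t)$, $\sigma\in\bigcup_k\mathfrak{S}_k$, to solving the stated (unique-solution) linear system of differential equations, and then to identify that solution explicitly in the evanescent regime. First I would specialise the system to $\alpha=0$. With the convention $\frac{((1-0)^{k}-1)}{0}=-k$, the first term is $-k\,m_{\sigma_0^{c}}(t_0)$ for $\sigma_0\in\mathfrak{S}_k$. In the sum over $\sigma\in\mathfrak{S}_k\setminus\{id\}$ the factor $\alpha^{{\sf nc}(\sigma\vee id)-[\sigma](1)-1}$ equals $\alpha^{r-1}$ with $r$ the number of non-trivial cycles of $\sigma$; since $\sigma\neq id$ we have $r\ge1$, so at $\alpha=0$ only the terms with $r=1$ survive (contributing $\alpha^{0}=1$), while $(1-\alpha)^{[\sigma](1)}\to1$ and $\prod_{i\ge2}(\lambda(i))^{[\sigma](i)/i}=\lambda(\ell)$ for a single $\ell$-cycle. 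Thus the evanescent system reads, for $\sigma_0\in\mathfrak{S}_k$,
\begin{align*}
\frac{d}{dt}m_{\sigma_0^{c}}(t)=-k\,m_{\sigma_0^{c}}(t)+\sum_{\ell\ge2}\lambda(\ell)\sum_{\substack{\sigma\le\sigma_0\\ \sigma\text{ a single }\ell\text{-cycle}}}m_{({}^{t}\sigma\sigma_0)^{c}}(t),\qquad m_{\sigma^{c}}(0)=\delta_{\sigma=id}.
\end{align*}

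Next I would look for the solution in factorised form $m_{\sigma^{c}}(t)=\prod_{C\text{ cycle of }\sigma}g_{|C|}(t)$ for one family $(g_n)_{n\ge1}$ with $g_n(0)=\delta_{n,1}$. The system above is linear and triangular (the right-hand side for $\sigma_0$ involves only $m$'s indexed by permutations in $\mathfrak{S}_k$ with strictly more cycles than $\sigma_0$), so it has a unique solution and it suffices to exhibit such a $(g_n)$ making the ansatz a solution. Plugging in needs two combinatorial inputs from the geodesic order of \cite{Gab1}: (i) a single $\ell$-cycle $\sigma\le\sigma_0$ has support inside a single cycle $C$ of $\sigma_0$ and is an increasing (non-crossing) sub-cycle of $C$, and then ${}^{t}\sigma\sigma_0$ leaves the other cycles of $\sigma_0$ unchanged and replaces $C$ by $\ell$ cycles whose lengths are the $\ell$ cyclic gaps of the chosen points (a Kreweras-complement computation); (ii) the rooting identity, for a $c$-cycle, $\sum g_{d_1}\cdots g_{d_\ell}=\frac{c}{\ell}[z^{c}]G(t,z)^{\ell}$, the sum being over increasing $\ell$-sub-cycles with gaps $(d_1,\dots,d_\ell)$, where $G(t,z)=\sum_{n\ge1}g_n(t)z^{n}$ (proved by counting pairs: such a sub-cycle together with a marked point). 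The product rule applied to $\prod_C g_{|C|}$, together with (i)--(ii) and $k=\sum_C|C|$, collapses the whole system to the single scalar family
\begin{align*}
g_n'(t)=-n\,g_n(t)+n\sum_{\ell\ge2}\frac{\lambda(\ell)}{\ell}\,[z^{n}]G(t,z)^{\ell},\qquad g_n(0)=\delta_{n,1},
\end{align*}
equivalently the first-order PDE $\partial_t G=z(\partial_z G)(P(G)-1)$ with $G(0,z)=z$, where $P(z):=\sum_{i\ge1}\lambda(i+1)z^{i}=\sum_{j\ge2}\lambda(j)z^{j-1}$.

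Finally I would solve this by Lagrange inversion: letting $W(t,z)$ be the formal power series determined by $W=z\,e^{t(P(W)-1)}$, one has $[z^{n}]W=\frac1n[w^{n-1}]e^{nt(P(w)-1)}=e^{-nt}\sum_{k\ge0}\frac{n^{k-1}t^{k}}{k!}[w^{n-1}]P(w)^{k}$, which expands to exactly the right-hand side of (\ref{equationexclu}); and differentiating $W=z\,e^{t(P(W)-1)}$ gives $\partial_t W=W(P(W)-1)/(1-tWP'(W))$ and $z\,\partial_z W=W/(1-tWP'(W))$, hence $\partial_t W=z(\partial_z W)(P(W)-1)$ and $W(0,z)=z$. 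Uniqueness of power-series solutions yields $G=W$, so $g_n(t)$ equals the claimed expression; since $m_{n^{c}}(t)=m_{(1,...,n)^{c}}(t)$ by Theorem~\ref{convergence1} and $(1,...,n)$ is a single $n$-cycle, this gives (\ref{equationexclu}). For the last formula I would integrate $z^{n}$ against the explicit shape of $\overline{\mu}_t^{\lambda}$ from Theorem~\ref{convergence1}: using $\int z^{n}\,d\lambda_{\mathbb{U}}=0$ for $n\ge1$ and $\sum_{k=0}^{d-1}e^{2ikn\pi/d}=d\,\mathbf{1}_{d\mid n}$ one gets $m_n(t)=\sum_{d\mid n}m_{d^{c}}(t)$, into which (\ref{equationexclu}) is substituted.

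The main obstacle is the combinatorial reduction of the second paragraph: identifying precisely which single cycles lie below $\sigma_0$ in the geodesic order, how ${}^{t}\sigma\sigma_0$ decomposes over the cycles of $\sigma_0$, and establishing the rooting identity (ii). This rests on the non-crossing-partition description of $[id,\sigma_0]_{\mathcal{P}_k}\cap\mathfrak{S}_k$ from \cite{Gab1} together with classical Kreweras complementation; once it is in place, checking that the ansatz solves the full system and the subsequent Lagrange-inversion computation are routine.
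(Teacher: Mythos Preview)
Your argument is correct and follows essentially the same route as the paper: specialise the differential system of Theorem~\ref{convergence1} to $\alpha=0$, observe that only single-cycle $\sigma$ survive, reduce to a scalar recursion for $m_{n^{c}}(t)$ via the product-over-cycles factorisation, pass to the generating function, and solve by Lagrange inversion. Your $P$ is the paper's ${\sf LS}$, your $G$ is the paper's ${\sf R}$ evaluated at $ze^{-t}$, and the implicit equation $W=ze^{t(P(W)-1)}$ is exactly the inverse relation ${\sf S}(t,z)=ze^{-t\,{\sf LS}(z)}$ used there.

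The one genuine difference is how the factorisation $m_{\sigma^{c}}(t)=\prod_{C}g_{|C|}(t)$ is obtained. The paper simply invokes Theorem~\ref{converfac} (asymptotic $\mathcal{P}$-factorisation in the evanescent case), which immediately gives the product form and lets one work only with $\sigma_0=(1,\dots,n)$; the combinatorial count then appears as the single line~(\ref{equationdiff1}) with its coefficient $n/k$. You instead posit the product as an ansatz and verify it against the full system using triangularity and uniqueness, which forces you to spell out the non-crossing description of single $\ell$-cycles below $\sigma_0$, the Kreweras splitting of ${}^{t}\sigma\sigma_0$, and the rooting identity $\sum_{\sigma}\prod_j g_{d_j}=\frac{c}{\ell}[z^{c}]G^{\ell}$. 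This is logically independent of Theorem~\ref{converfac} and hence slightly more self-contained, at the cost of writing out combinatorics that the paper absorbs into that earlier result; conversely, the paper's route is shorter because the factorisation has already been paid for.
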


In particular, let us consider a positive integer $k$, let us suppose that $\lambda(k)=1$ and for any positive integer $l \ne k$, $\lambda(l)=0$. This means that we are considering a random walk which jumps by multiplication by a uniform $k$-cycle. Let $t$ be a non-negative real. If there does not exist any positive integer $u$ such that $ n = u(k-1)+1$, then $m_{n^{c}}(t) = 0$. Besides, for any $u \in \mathbb{N}$: 
\begin{align}
\label{momexspe}
m_{( u(k-1)+1)^{c}} (t) = e^{-(u(k-1)+1)t} t^{u} \frac{(u(k-1)+1)^{u-1}}{u!}. 
\end{align}

Theorems \ref{convergence1} and \ref{converfac} can be extended to the whole process $\left(S_t^{N}\right)_{t \geq 0}$.

\begin{theorem}
\label{converlevy}
The family $\left(S_{t}^{N}\right)_{t \geq 0}$ converges in $\mathcal{P}$-distribution as $N$ goes to infinity toward the $\mathcal{P}$-distribution of a multiplicative $\mathcal{P}$-L\'{e}vy process. If $\left(\lambda_{N}\right)_{N \in \mathbb{N}}$ is evanescent, it is not a multiplicative free  L\'{e}vy process: the multiplicative increments are not asymptotically free. 
\end{theorem}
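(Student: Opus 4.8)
The first assertion is meant to be a direct application of the L\'evy-process machinery of Section \ref{sec:remind}. For every $N$, $\left(S_t^N\right)_{t\geq 0}$ is a continuous-time random walk on the group $\mathfrak{S}(N)$, hence a multiplicative real-matricial L\'evy process with stationary independent multiplicative increments, invariant in law by conjugation by $\mathfrak{S}(N)$ (Lemma \ref{invariance}). Reading the generator of Definition \ref{simplewalk} off on the matrix-valued map $\sigma_0\mapsto\sigma_0^{\otimes k}$, the associated operators are
\begin{align*}
G_k^N = \frac{d}{dt}_{|t=0}\,\mathbb{E}\left[\left(S_t^N\right)^{\otimes k}\right] = \frac{N}{\lambda_N(1^c)}\,\frac{1}{\#\lambda_N}\sum_{\sigma\in\lambda_N}\Big(\sigma^{\otimes k} - \rho_N(id_k)\Big),
\end{align*}
which lie in ${\sf Vect} \left\{\rho_N(p) | p \in \mathcal{P}_k\right\}$ by conjugation-invariance. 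Expanding $\frac{1}{\#\lambda_N}\sum_{\sigma\in\lambda_N}\sigma^{\otimes k}$ in that family — the same computation that produces the coefficients of the differential system of Theorem \ref{convergence1}, carried out in Section \ref{prep} — one checks that the coordinate numbers $\kappa_p(G_k^N)$, equivalently the exclusive moments $m_{p^c}(G_k^N)$, converge as $N\to\infty$ whenever $(\lambda_N)_N$ converges. Granting this, $(G_k^N)_{N>0}$ converges for every $k$, so Theorem \ref{th:convlevy} gives at once that $\left(S_t^N\right)_{t\geq 0}$ converges in $\mathcal{P}$-distribution towards the $\mathcal{P}$-distribution of a $\mathcal{P}$-L\'evy process, which is multiplicative since the $S_t^N$ are group-valued, with cumulants governed by the system already solved in Theorem \ref{convergence1}.

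For the second assertion, assume $(\lambda_N)_N$ evanescent. The plan is to exhibit two multiplicative increments of the limit that fail to be asymptotically free and then quote Lemma \ref{eq:nonlibre}. Fix $0<s<t$ and set $M_N=S_s^N$, $L_N=S_t^N\left(S_s^N\right)^{-1}$. Writing $S_u^N$ as the composition of i.i.d.\ $\lambda_N$-distributed jumps along a Poisson clock, $M_N$ and $L_N$ are independent, $L_N$ has the law of $S_{t-s}^N$, and both are invariant in law by conjugation by $\mathfrak{S}(N)$; hence, by Theorem \ref{th:free}, $M_N$ and $L_N$ are $\mathcal{P}$-free, and each of them satisfies the asymptotic $\mathcal{P}$-factorization by Theorem \ref{converfac}~(1) since $(\lambda_N)_N$ is evanescent. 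By Lemma \ref{eq:nonlibre} it then suffices to prove $\mathbb{E}\kappa_{{\sf 0}_2}[S_u]\neq 0$ for every $u>0$.

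This I would settle by an explicit computation. First, $\rho_N({\sf 0}_2)=\sum_{i=1}^{N}E_i^i\otimes E_i^i$, so for any permutation matrix $\sigma$ the trace $Tr\left(\sigma^{\otimes 2}\rho_N({\sf 0}_2)\right)$ equals the number of fixed points of $\sigma$; since ${\sf 0}_2$ is symmetric and ${\sf nc}({\sf 0}_2\vee id)=1$, this yields $\mathbb{E}m_{{\sf 0}_2}[S_u^N]=\mathbb{P}\left(S_u^N(1)=1\right)$ for all $N$, hence $\mathbb{E}m_{{\sf 0}_2}[S_u]=m_{1^c}(u)$; likewise $\mathbb{E}m_{id_1}[S_u]=m_{1^c}(u)$. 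Next, $[id,{\sf 0}_2]_{\mathcal{P}_2}=\{id_2,{\sf 0}_2\}$ while $[id,id_2]_{\mathcal{P}_2}=\{id_2\}$, so Theorem \ref{th:convergence} gives $\mathbb{E}m_{{\sf 0}_2}[S_u]=\mathbb{E}\kappa_{id_2}[S_u]+\mathbb{E}\kappa_{{\sf 0}_2}[S_u]$ and $\mathbb{E}\kappa_{id_2}[S_u]=\mathbb{E}m_{id_2}[S_u]$, while $id_2=id_1\otimes id_1$ together with the asymptotic $\mathcal{P}$-factorization gives $\mathbb{E}m_{id_2}[S_u]=\left(\mathbb{E}m_{id_1}[S_u]\right)^2=\left(m_{1^c}(u)\right)^2$. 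Using $m_{1^c}(u)=e^{-u}$ from Theorem \ref{permutations},
\begin{align*}
\mathbb{E}\kappa_{{\sf 0}_2}[S_u] = m_{1^c}(u) - \left(m_{1^c}(u)\right)^2 = e^{-u} - e^{-2u} > 0 \qquad(u>0),
\end{align*}
and so $\mathbb{E}\kappa_{{\sf 0}_2}[M]\,\mathbb{E}\kappa_{{\sf 0}_2}[L] = \left(e^{-s}-e^{-2s}\right)\left(e^{-(t-s)}-e^{-2(t-s)}\right) > 0$. Lemma \ref{eq:nonlibre} then shows that $M$ and $L$ are not free, so the increments of the limiting process are not asymptotically free and it is not a multiplicative free L\'evy process.

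The routine part is the first paragraph, which just reuses the computation of $G_k^N$ made for Theorem \ref{convergence1}. The main obstacle is the second half: one must (i) take the increments on the correct (left) side so that Theorem \ref{th:free} and Theorem \ref{converfac}~(1) genuinely apply to $M_N$ and $L_N$, and (ii) produce the explicit strictly positive value $\lim_N\mathbb{E}\kappa_{{\sf 0}_2}[S_u^N]=e^{-u}-e^{-2u}$ that Lemma \ref{eq:nonlibre} requires — the identification $\mathbb{E}m_{{\sf 0}_2}[S_u]=m_{1^c}(u)$ and the factorization of $\mathbb{E}m_{id_2}[S_u]$ are what keep (ii) short.
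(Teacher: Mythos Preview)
Your proposal is correct and follows essentially the same route as the paper: the first part reuses the convergence of $G_k^N$ established for Theorem~\ref{convergence1} and applies Theorem~\ref{th:convlevy}; for the non-freeness part, the paper takes two independent copies $S_{t_1}^N$ and $S_{t_2}^{'N}$ and invokes Proposition~\ref{critere} (which packages Lemma~\ref{calculkappa} together with Lemma~\ref{eq:nonlibre}), whereas you take the increments $S_s^N$ and $S_t^N(S_s^N)^{-1}$ and compute $\mathbb{E}\kappa_{{\sf 0}_2}[S_u]=e^{-u}(1-e^{-u})$ by hand before applying Lemma~\ref{eq:nonlibre} directly --- this is exactly the content of Lemma~\ref{calculkappa} unpacked, so the arguments coincide.
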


In Section  \ref{sec:logcu}, we will prove an other theorem, namely Theorem \ref{th:logcum}, which allows us to compute the log-cumulants of the limiting multiplicative $\mathcal{P}$-free L\'{e}vy process.

In the second part of the article, in order to construct the Yang-Mills $\mathfrak{S}_{\infty}$-field, we will need the following result whose proof will not be given since it is an easy consequence of Theorems \ref{converfac} and \ref{converlevy}, Lemma \ref{invariance} of this article and Theorem 5.1 and 3.1 and Proposition 3.1 of~\cite{Gab2}.

Let $n$ be a positive integer and for any positive integer $N$, let us consider $(S_t^{1,N})_{t \geq 0}$,...,$(S_t^{n,N})_{t \geq 0}$, $n$ independent $\lambda_{N}$-random walks on $\mathfrak{S}(N)$. Recall the Equation (\ref{eq:algebre}) and the representation $\rho_N$ defined in Section \ref{sec:remind}.   
\begin{theorem}
\label{convergencefamille}
The family of random matrices $${\bold{F}}_N = \bigcup\limits_{k \in \{1,...,n\}} \bigg(\big(S_t^{k,N}\big)_{t \geq 0} \cup \big((S_t^{k,N})^{-1}\big)_{t \geq 0}\bigg)$$ 
converges in $\mathcal{P}$-distribution as $N$ goes to infinity. Let us suppose that $\left(\lambda_N\right)_{N \in \mathbb{N}}$ is evanescent. The family $\bold{F}_N$ satisfies the asymptotic $\mathcal{P}$-factorization property: $\bold{F}_N$ converges in probability in $\mathcal{P}$-distribution. Besides, for any $i \in \{1,...,k\}$, $\big(S_t^{i,N}\big)_{t \geq 0} \cup \big((S_t^{i,N})^{-1}\big)_{t \geq 0}$ is asymptotically $\mathcal{P}$-free but not asymptotically free from $\bigcup\limits_{k \in \{1,...,n\}\setminus \{i\}} \bigg(\big(S_t^{k,N}\big)_{t \geq 0} \cup \big((S_t^{k,N})^{-1}\big)_{t \geq 0}\bigg).$
\end{theorem}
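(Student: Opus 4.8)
The plan is to reduce the statement to the single-process results (Theorems~\ref{converfac} and~\ref{converlevy}) and then to glue the $n$ independent walks together by means of $\mathcal{P}$-freeness. Since each $(S_t^{k,N})_{t\geq 0}$ is invariant in law by conjugation by $\mathfrak{S}(N)$ (Lemma~\ref{invariance}), the whole family $\bold{F}_N$ falls within the scope of the $\mathcal{P}$-calculus recalled in Section~\ref{sec:remind}. First I would treat a single enriched sub-family $(S_t^{k,N})_{t\geq 0}\cup((S_t^{k,N})^{-1})_{t\geq 0}$: as $S_t^{k,N}$ is a permutation matrix, $(S_t^{k,N})^{-1}={}^{t}(S_t^{k,N})$, and every $\mathcal{P}$-observable of the enriched sub-family is obtained from a $\mathcal{P}$-observable of $(S_t^{k,N})_{t\geq 0}$ alone by applying the transposition $p\mapsto{}^{t}p$ to the relevant strands; hence, using Proposition~3.1 of~\cite{Gab2}, the enriched sub-family converges in $\mathcal{P}$-distribution by Theorem~\ref{converlevy}, and when $(\lambda_N)_{N}$ is evanescent it moreover satisfies the asymptotic $\mathcal{P}$-factorization by Theorem~\ref{converfac}.

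Next I would deduce the joint convergence. The $n$ sub-families are functions of $n$ independent random walks, hence globally $\mathcal{P}$-free by Theorem~\ref{th:free} (Theorem~5.1 of~\cite{Gab2}). The defining relations of $\mathcal{P}$-freeness express every finite-dimensional joint cumulant $\mathbb{E}\kappa_p(B_1,\dots,B_k)$, with the $B_j$'s drawn from $\bold{F}_N$, through the marginal cumulants of the individual sub-families: such a cumulant vanishes whenever two of its arguments lying in the same cycle of $p$ come from different sub-families, and otherwise factorizes, via extraction and the tensor structure, into a product of marginal cumulants. Therefore convergence of the marginal $\mathcal{P}$-cumulants, established in the previous step, forces convergence of the joint $\mathcal{P}$-cumulants, and Theorem~\ref{th:convergence} gives the convergence of $\bold{F}_N$ in $\mathcal{P}$-distribution.

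Assume now $(\lambda_N)_{N}$ evanescent. Combining the marginal asymptotic $\mathcal{P}$-factorizations obtained above with $\mathcal{P}$-freeness, I would check that for all $p_1,p_2$ and every way of filling the slots of $p_1\otimes p_2$ with elements of $\bold{F}_N$ one has $\mathbb{E}m_{p_1\otimes p_2}=\mathbb{E}m_{p_1}\,\mathbb{E}m_{p_2}$: if all slots come from one sub-family this is that sub-family's own asymptotic $\mathcal{P}$-factorization, and if they split between sub-families it follows from the factorization built into $\mathcal{P}$-freeness together with the same property of the marginals (cf.\ Theorem~3.1 of~\cite{Gab2}). Hence $\bold{F}_N$ satisfies the asymptotic $\mathcal{P}$-factorization, and Theorem~\ref{th:conv} upgrades the convergence in $\mathcal{P}$-distribution to convergence in probability. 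Asymptotic $\mathcal{P}$-freeness of the sub-family indexed by $i$ from $\bigcup_{k\neq i}\left((S_t^{k,N})_{t\geq 0}\cup((S_t^{k,N})^{-1})_{t\geq 0}\right)$ is then exactly Theorem~\ref{th:free} applied to the independence of the $i$-th walk from the remaining $n-1$ walks.

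It remains to see that this $\mathcal{P}$-freeness is not ordinary asymptotic freeness, and for this it suffices to produce one element on each side that is not asymptotically free from the other. Fix $j\neq i$ and a time $t>0$ and look at $S_t^{i,N}$ and $S_t^{j,N}$: being independent they are $\mathcal{P}$-free (Theorem~\ref{th:free}) and each satisfies the asymptotic $\mathcal{P}$-factorization, so by Lemma~\ref{eq:nonlibre} it is enough to check $\mathbb{E}\kappa_{{\sf 0}_2}[S_t^{i}]\neq 0$ for $t>0$. This is the one point that requires an actual computation, and it is short: since $S_t^{i,N}$ is a permutation matrix, $m_{{\sf 0}_2}(S_t^{i,N})=\frac1N Tr\, S_t^{i,N}=:F_N$ is the fraction of its fixed points and $m_{id_2^{c}}(S_t^{i,N})=F_N^2-\frac1N F_N$, and as $[id,{\sf 0}_2]_{\mathcal{P}_2}=\{id_2,{\sf 0}_2\}$ Theorem~\ref{th:convergence} gives $\mathbb{E}\kappa_{{\sf 0}_2}[S_t^{i}]=\lim_N\mathbb{E}m_{{\sf 0}_2}(S_t^{i,N})-\lim_N\mathbb{E}m_{id_2^{c}}(S_t^{i,N})=\lim_N(\mathbb{E}F_N-\mathbb{E}F_N^2)$; now $\lim_N\mathbb{E}F_N=\int_{\mathbb{U}}z\,d\overline{\mu}_t^{\lambda}=m_{1^{c}}(t)=e^{-t}$ by Theorem~\ref{convergence1}, while by the evanescent case of Theorem~\ref{converfac} the bounded quantity $F_N$ converges in probability to $e^{-t}$, whence $\mathbb{E}\kappa_{{\sf 0}_2}[S_t^{i}]=e^{-t}-e^{-2t}>0$ for $t>0$. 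Lemma~\ref{eq:nonlibre} then shows $S_t^{i,N}$ and $S_t^{j,N}$ are not asymptotically free, and since asymptotic freeness of a family passes to sub-collections, the sub-family indexed by $i$ is not asymptotically free from the union of the others. The only genuinely non-formal step is this last non-vanishing of $\mathbb{E}\kappa_{{\sf 0}_2}$; everything else is bookkeeping around the results of~\cite{Gab2}.
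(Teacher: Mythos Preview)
Your proposal is correct and follows exactly the route the paper indicates: the paper does not give a proof of Theorem~\ref{convergencefamille} but states it is ``an easy consequence of Theorems~\ref{converfac} and~\ref{converlevy}, Lemma~\ref{invariance} of this article and Theorem~5.1 and~3.1 and Proposition~3.1 of~\cite{Gab2}'', and those are precisely the ingredients you invoke. Your treatment of the non-freeness step is essentially a direct unpacking of Proposition~\ref{critere} (via Lemma~\ref{calculkappa}), reaching the same conclusion $\mathbb{E}\kappa_{{\sf 0}_2}[S_t]=e^{-t}(1-e^{-t})\ne 0$ that the paper uses at the end of the proof of Theorem~\ref{converlevy}.
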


We will finish this section with some results about phase transition for these random walks. Let us suppose for this discussion that $(\lambda_N)_{N \in \mathbb{N}}$ is evanescent. In Theorem \ref{convergence1}, we saw that the measure $\mu_{t}^{\lambda}$ is the sum of an atomic part and $m_{\infty^{c}}(t)$ times the Lebesgue measure on the unit circle. In fact there exists a real $t^{\lambda}_c \geq 0$ such that ${\mu}_{t}^{\lambda}$ is purely atomic for $t \leq t^{\lambda}_c$ and for any $t >t_c$, ${\mu}_{t}^{\lambda}$ is a sum of a purely atomic and a multiple of the Lebesgue measure. This critical time is the same critical as found by N. Berestycki in  \cite{Beres} for the phase transition for the distance to the identity. 

\begin{theorem}
\label{theo:tcritique}
Let us suppose that $(\lambda_N)_{N \in \mathbb{N}}$ is evanescent. The function which sends $t$ on $m_{\infty^{c}}(t)$ is continuous and converges to $1$ as $t$ goes to infinity. Besides, if we define: 
\begin{align}
\label{tcrit}
t^{\lambda}_c= \delta_{\sum_{j=2}^{\infty} \lambda(j) = 1} \frac{1}{ \sum_{j=2}^{\infty} (j-1) \lambda(j)}, 
\end{align}
for any $0\leq t\leq t_c^{\lambda}$, $m_{\infty^{c}}(t) = 0$ and for any $t> t_c^{\lambda}$, $m_{\infty^{c}}(t) > 0$. 
\end{theorem}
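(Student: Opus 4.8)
The plan is to reduce the study of $t\mapsto m_{\infty^c}(t)$ to that of the smallest non-negative solution of an explicit fixed point equation of Galton--Watson type, read off from the closed formula of Theorem~\ref{permutations}, and then to analyse that solution by convexity. Concretely, set $\phi(x)=\sum_{j\ge 2}\lambda(j)x^{j-1}$, a power series with non-negative coefficients and $\phi(1)=\sum_{j\ge 2}\lambda(j)\le 1$, and put $g_t(w)=e^{t(\phi(w)-1)}$. In the formula of Theorem~\ref{permutations} the inner sum over compositions of $n-1$ is exactly $[x^{n-1}]\phi(x)^k$, and summing the series in $k$ gives
\[
m_{n^c}(t)=\frac{e^{-nt}}{n}\,[x^{n-1}]\,e^{nt\phi(x)}=\frac1n\,[x^{n-1}]\,\bigl(g_t(x)\bigr)^{n}.
\]
By Lagrange inversion (legitimate since $g_t(0)=e^{-t}\neq 0$), the generating function $F_t(z):=\sum_{n\ge 1}m_{n^c}(t)z^n$ is the unique power series with $F_t(z)=z\,g_t(F_t(z))$. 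As $m_{n^c}(t)\ge 0$ and, by Theorem~\ref{convergence1}, $\sum_n m_{n^c}(t)\le 1$, the series $F_t$ has radius of convergence at least $1$; Abel's theorem together with continuity of $g_t$ on $[0,1]$ (which holds because $\phi$ converges at $1$) then show that $S(t):=\sum_{n\ge 1}m_{n^c}(t)=F_t(1^-)$ is a fixed point of $g_t$ in $[0,1]$, and monotonicity of $F_t$ on $[0,1]$ identifies it with the \emph{smallest} such fixed point $w_*(t)$. Hence $m_{\infty^c}(t)=1-w_*(t)$.

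Next I analyse $w_*(t)$ by convexity. On $[0,1]$ the map $g_t$ is increasing and convex, with $g_t(0)=e^{-t}>0$ and $g_t(1)=e^{t(\phi(1)-1)}\le 1$; for $t>0$ and $\phi\not\equiv 0$ it is moreover strictly increasing and strictly convex (the case $\phi\equiv 0$, i.e.\ all $\lambda(j)=0$, is immediate since then $w_*(t)=e^{-t}$). If $\sum_{j\ge 2}\lambda(j)=1$, then $g_t$ is a genuine probability generating function with $g_t(1)=1$ and mean $g_t'(1)=t\sum_{j\ge 2}(j-1)\lambda(j)$, so the classical Galton--Watson criterion gives $w_*(t)=1$ iff $g_t'(1)\le 1$, i.e.\ iff $t\le t_c^\lambda=\bigl(\sum_{j\ge 2}(j-1)\lambda(j)\bigr)^{-1}$, and $w_*(t)\in(0,1)$ for $t>t_c^\lambda$. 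If $\sum_{j\ge 2}\lambda(j)<1$, then $g_t(1)<1$ for every $t>0$, hence $w\mapsto g_t(w)-w$ is positive at $0$, negative at $1$, and has a unique zero $w_*(t)\in(0,1)$; here $t_c^\lambda=0$. In every supercritical regime $t>t_c^\lambda$ one has $g_t'(w_*(t))<1$ (strict convexity rules out tangency), so the implicit function theorem makes $t\mapsto w_*(t)$ real-analytic on $(t_c^\lambda,\infty)$, while $w_*\equiv 1$ on $[0,t_c^\lambda]$.

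It remains to settle continuity and the behaviour at infinity. Since $\partial_t g_t(w)=(\phi(w)-1)g_t(w)\le 0$ on $[0,1]$, $w_*(t)$ is non-increasing in $t$; combined with the previous step, continuity of $m_{\infty^c}$ reduces to the right limit at $t_c^\lambda$: if $t_n\downarrow t_c^\lambda$ and $w_*(t_n)\to L$, passing to the limit in $w_*(t_n)=g_{t_n}(w_*(t_n))$ gives $g_{t_c^\lambda}(L)=L$, and $g_{t_c^\lambda}$ has $1$ as its only fixed point in $[0,1]$ (the borderline convex case), so $L=1=w_*(t_c^\lambda)$. For $t\to\infty$, along any subsequence with $w_*(t)\to c$ we have $\ln w_*(t)=t(\phi(w_*(t))-1)\to\ln c$; since $w_*$ is non-increasing and $<1$ beyond $t_c^\lambda$, $c<1$, hence $\phi(c)<\phi(1)\le 1$ and the right-hand side tends to $-\infty$, forcing $c=0$. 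Thus $m_{\infty^c}(t)=1-w_*(t)\to 1$; moreover $m_{\infty^c}(t)=0$ exactly when $t\le t_c^\lambda$ and $m_{\infty^c}(t)>0$ for $t>t_c^\lambda$, with the critical time (\ref{tcrit}).

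The delicate step is the first one: turning the combinatorial identity of Theorem~\ref{permutations} into the analytic equation $S(t)=g_t(S(t))$. The Lagrange inversion is purely formal, and one must use the a priori bound $\sum_n m_{n^c}(t)\le 1$ from Theorem~\ref{convergence1} to secure convergence of $F_t$ up to $z=1$, then use Abel's theorem and a monotonicity argument to pin down $S(t)$ as the \emph{smallest} fixed point of $g_t$ rather than a larger one. Once this is in place the branching-process-type analysis above is routine; note that $t_c^\lambda$ is precisely the reciprocal of the mean $\phi'(1)=\sum_{j\ge 2}(j-1)\lambda(j)$ of the offspring law $w\mapsto e^{t(\phi(w)-1)}$, which is why the threshold takes this form and coincides with Berestycki's critical time.
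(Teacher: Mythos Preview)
Your proof is correct and rests on the same core identity as the paper: both arrive at the fixed point equation $S(t)=g_t(S(t))$ with $g_t(w)=e^{t(\phi(w)-1)}$ (the paper writes this as $\Phi_t(z)=ze^{-t({\sf LS}(z)-1)}=1$, which is the same equation) and both determine the threshold $t_c^\lambda$ by a convexity/log-concavity analysis of $g_t$ on $[0,1]$. The genuine difference is in the order of the logic. The paper first establishes continuity of $t\mapsto\sum_n m_{n^c}(t)$ and its limit $0$ at infinity by a direct dominated-convergence estimate on the double sum $\sum_{k,n}\frac{1}{n}e^{-nt}\frac{(nt)^k}{k!}p(k,n)$, and only afterwards uses these two facts to pick out which root of $\Phi_t(z)=1$ equals $f(t)$. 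You reverse this: you identify $S(t)$ \emph{a priori} as the smallest fixed point of $g_t$ via the Lagrange/branching-process iteration, and then read off continuity (implicit function theorem away from $t_c^\lambda$, plus the boundary argument at $t_c^\lambda$) and the limit at infinity from the fixed-point relation itself. Your route avoids the explicit tail estimates entirely and makes the Galton--Watson structure transparent (so that $t_c^\lambda=1/\phi'(1)$ is literally the mean-one criterion); the paper's route is more self-contained in that it does not invoke the ``smallest fixed point'' identification, which in your write-up deserves one more line of justification (the iteration $w_0=0$, $w_{n+1}=g_t(w_n)$ converges to the smallest fixed point and coincides with $F_t^{(n)}(1)$).
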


Using the theorems already explained, we get a generalization of Theorem $3$ of \cite{BeresDurrett}, Theorem $4$ of \cite{BeresKozma}. We recommend the reader to have also a look at Theorem $3$ of \cite{Beres}.

\begin{corollary} 
\label{KozmaBerestycki}
Let us suppose that $(\lambda_N)_{N \in \mathbb{N}}$ converges and is evanescent. For any positive integer $N$, let $\left(S_t^{N}\right)_{t \geq 0}$ be a $\lambda_N$-random walk on the symmetric group. For any permutations $\sigma$ and $\sigma'$ in $\mathfrak{S}(N)$, let $d_{\mathfrak{S}(N)}\left(\sigma,\sigma'\right) $ be the distance in $\mathfrak{S}(N)$ between $\sigma$ and $\sigma'$ defined as $N - {\sf nc}\left(\sigma \vee \sigma' \right)$. Then for any $t \geq 0$, $\frac{1}{N}  d_{\mathfrak{S}(N)} \left(id_N, S_t^{N}\right)$ converges in probability as $N$ goes to infinity to the non-random continuous function:
\begin{align*}
d^{\lambda}(t) = 1-\sum_{k=1}^{\infty} \frac{1}{k}m_{k^{c}}(t),  
\end{align*}
where $m_{k^{c}}(t)$ is given by Equation $(\ref{equationexclu})$. 
\end{corollary}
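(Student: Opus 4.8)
\emph{Proof strategy.} The plan is to rewrite the normalized distance in terms of exclusive moments that are already under control, and to handle the resulting infinite sum by a uniform tail estimate. Since $d_{\mathfrak{S}(N)}(id_N,\sigma)=N-{\sf nc}(\sigma)$, and since the multiplicity of the eigenvalue $1$ of a permutation matrix equals its number of cycles, we have $\frac{1}{N}d_{\mathfrak{S}(N)}(id_N,S^N_t)=1-\frac{1}{N}{\sf nc}(S^N_t)=1-\mu^{\lambda_N}_t(\{1\})$. One cannot read the limit off directly from the weak convergence $\mu^{\lambda_N}_t\to\overline{\mu}^\lambda_t$ of Theorem \ref{converfac}, because an atom need not be continuous under weak limits; instead I would unfold the definitions of $\rho_N(\cdot^{c})$ and of $m_{\cdot^{c}}$ on a single $n$-cycle and check, by the short trace computation which counts injective labelings running along the cycle, that $m_{n^{c}}(S^N_t)$ equals exactly the proportion of $\{1,\dots,N\}$ lying in a cycle of $S^N_t$ of length precisely $n$.

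Summing this identity over $n$ gives, deterministically for every $N$, the two relations
\[
\sum_{n\geq1}\frac{1}{n}\,m_{n^{c}}(S^N_t)=\frac{1}{N}{\sf nc}(S^N_t),\qquad\sum_{n\geq1}m_{n^{c}}(S^N_t)=1,
\]
the first because dividing the number of points in cycles of length $n$ by $n$ counts the cycles of length $n$, the second because every point lies in exactly one cycle. In particular $\frac{1}{N}d_{\mathfrak{S}(N)}(id_N,S^N_t)=1-\sum_{n\geq1}\frac{1}{n}m_{n^{c}}(S^N_t)$, and, as $m_{n^{c}}(S^N_t)\geq0$, the uniform tail bound $\sum_{n>K}\frac{1}{n}m_{n^{c}}(S^N_t)\leq\frac{1}{K+1}$ holds for all $K$, all $N$ and all realizations. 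By Theorem \ref{convergence1}, $\sum_{n\geq1}m_{n^{c}}(t)=1-m_{\infty^{c}}(t)\leq1$, so the same bound $\sum_{n>K}\frac{1}{n}m_{n^{c}}(t)\leq\frac{1}{K+1}$ holds for the limits, and by definition $d^\lambda(t)=1-\sum_{n\geq1}\frac{1}{n}m_{n^{c}}(t)$ (equivalently $\sum_{n}\frac1n m_{n^c}(t)=\overline{\mu}^\lambda_t(\{1\})$). Hence it suffices to show $\sum_{n\geq1}\frac{1}{n}m_{n^{c}}(S^N_t)\to\sum_{n\geq1}\frac{1}{n}m_{n^{c}}(t)$ in probability.

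This is where the uniform tail bound does the work. Fix $\epsilon>0$ and pick $K$ with $\frac{2}{K+1}<\frac{\epsilon}{2}$; then
\[
\left|\sum_{n\geq1}\frac{1}{n}m_{n^{c}}(S^N_t)-\sum_{n\geq1}\frac{1}{n}m_{n^{c}}(t)\right|\ \leq\ \sum_{n=1}^{K}\frac{1}{n}\left|m_{n^{c}}(S^N_t)-m_{n^{c}}(t)\right|+\frac{2}{K+1}.
\]
Since $(\lambda_N)_{N\in\mathbb{N}}$ is evanescent, Theorem \ref{converfac} gives that $(S^N_t)_{N\in\mathbb{N}}$ converges in probability in $\mathcal{P}$-distribution, so by Theorem \ref{th:conv} (together with the equivalences of Theorem \ref{th:convergence} relating moments and exclusive moments) each $m_{n^{c}}(S^N_t)$ converges in probability to $m_{n^{c}}(t)$, hence so does the finite sum $\sum_{n\leq K}\frac{1}{n}\left|m_{n^{c}}(S^N_t)-m_{n^{c}}(t)\right|$, with limit $0$. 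Therefore $\mathbb{P}\left(\left|\sum_{n}\frac1n m_{n^c}(S^N_t)-\sum_{n}\frac1n m_{n^c}(t)\right|>\epsilon\right)\leq\mathbb{P}\left(\sum_{n\leq K}\frac{1}{n}|m_{n^{c}}(S^N_t)-m_{n^{c}}(t)|>\frac{\epsilon}{2}\right)\to0$, which is the asserted convergence in probability, with limit $d^\lambda(t)$.

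Finally, for continuity of $d^\lambda$, each $t\mapsto m_{n^{c}}(t)$ is continuous (by the closed formula (\ref{equationexclu}), or as a coordinate of the solution of the differential system of Theorem \ref{convergence1}), and the uniform bound $\sum_{n>K}\frac{1}{n}m_{n^{c}}(t)\leq\frac{1}{K+1}$ exhibits $t\mapsto\sum_{n\geq1}\frac{1}{n}m_{n^{c}}(t)$ as a uniform limit on $[0,\infty)$ of continuous partial sums, hence as a continuous function, so $d^\lambda=1-\sum_{n\geq1}\frac{1}{n}m_{n^{c}}(\cdot)$ is continuous. The one genuinely delicate point in the whole argument is the interchange of the limit $N\to\infty$ with the sum over cycle lengths; the deterministic normalization $\sum_{n}m_{n^{c}}(S^N_t)=1$ is exactly what turns it into an honest uniform tail estimate and lets us avoid tracking how probability mass escapes into macroscopic cycles.
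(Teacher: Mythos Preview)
Your proof is correct and follows essentially the same approach as the paper: rewrite $\frac{1}{N}d_{\mathfrak{S}(N)}(id_N,S^N_t)$ as $1-\sum_{n\geq 1}\frac{1}{n}m_{n^{c}}(S^N_t)$, establish a uniform-in-$N$ tail bound of order $1/K$, and combine it with the term-by-term convergence in probability of $m_{n^{c}}(S^N_t)$ to $m_{n^{c}}(t)$ to interchange limit and sum. Your tail estimate via $\sum_n m_{n^{c}}(S^N_t)=1$ is a minor rephrasing of the paper's count ``at most $N/K$ cycles of size $\geq K$'', and you additionally spell out the continuity of $d^\lambda$, which the paper leaves implicit.
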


Using Equation $(\ref{momexspe})$, we recover Equation $(5)$ of \cite{Beres}, yet this expression of $d^{\lambda}(t)$ for general $\lambda$ seems to be new. The function $d^{\lambda}(t)$ was studied in \cite{Beres}, when $\left(\lambda_N\left(1^{c}\right)\right)_{ N \in \mathbb{N}}$ is constant and equal to a positive integer $a$: using $t^{\lambda}_c$ defined before, it was shown that $d^{\lambda}(t)$ is $C^{\infty}$ on a subset of the form $\mathbb{R}^{+} \subset I$, with $I$ a bounded interval of $] t_0, \infty[$,  for any $t < t^{\lambda}_c$, $d^{\lambda}(t) = \frac{t}{a}$ and $ \left(d^{\lambda}\right)''\left((t^{\lambda}_{c}\right)^{+}) = - \infty$. Using the Stirling's formula, it is easy to see that for the random walk which only jumps by multiplication by a $k$-cycle, the set $I$ is empty.

\subsection{Preliminary results}
\label{prep}

Before we prove the theorems of Section \ref{generaltheorem}, we present some preliminary results which are not stated in \cite{Gab2}.  

\subsubsection{Exclusive moments for permutation matrices}

Let $k$ be a positive integer. 

\begin{definition}
\label{parure}
Let $I=\{b_1,...,b_{s}\}$ be a partition of $\{1,...,k\}$, let $\sigma$ be an irreducible permutation of $\{1,...,s\}$, and let $i_0 \in \{1,...,k\}$. For any integer $l \in \{1,...,s\}$, we denote by $b'_l$ the set $\{j \in \{1',...,k'\}, \exists  i \in b_l, j=i'\}$. The partition:
\begin{align*}
p_1 = \left\{b_l \cup b'_{\sigma(l)}, l \in \left\{ 1,...,s\right\}\right\}
\end{align*}
is called the {\em necklace} associated with $(I, \sigma)$, and the partition: 
\begin{align*}
p_2 = \left\{b_l \cup b'_{\sigma(l)}, l \in \left\{ 1,...,s\right\}\setminus \{i_0\}\right\}
\end{align*}
is called the {\em chain} associated with $(I, \sigma, i_0)$. The {\em true-length} of $p_1$ and $p_2$, denoted $| p_1|$ and $|p_2|$, is equal to $s$. 

A partition $p$ in $\mathcal{P}_k$ is a {\em parure} if for any cycle $c$ of $p$ the extraction of $p$ on $c$, denoted by $p_c$, is either a chain or a necklace. The true-length of $p$ is: 
\begin{align*}
| p| = \sum_{c \text{ cycle of } p } | p_{c}|. 
\end{align*}
\end{definition}

Let $N$ be a positive integer, let $S$ be a permutation in $\mathfrak{S}(N)$. For any positive integer $l$, $(1,...,l)$ is the $l$-cycle in $\mathfrak{S}_l$. Recall that $m_{p^{c}}$ is the normalized exclusive moment. 

\begin{proposition}
\label{critereconv}
Let $p$ be a partition in $\mathcal{P}_k$. If $p$ is not a parure then $ m_{ p^{c}} \left(S\right)  = 0$. If $p$ is a necklace, then $m_{p^{c}}\left( S\right) = m_{\left(1,...,| p |\right)^{c}}\left( S\right)$. If $p$ is a chain, then: 
\begin{align*}
m_{p^{c}}\left( S\right) = 1- \sum_{l=1}^{ | p|} m_{(1,...,l)^{c}}\left( S\right). 
\end{align*}
\end{proposition}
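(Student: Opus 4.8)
The strategy is to compute the exclusive moment $m_{p^c}(S)$ directly from the definition, tracking which index-tuples $I \in \{1,\dots,N\}^{2k}$ contribute. Recall that
\[
m_{p^c}(S) = \frac{1}{N^{{\sf nc}(p\vee id)}} Tr\!\left(S^{\otimes k}\rho_N({}^{t}p^{c})\right)
= \frac{1}{N^{{\sf nc}(p\vee id)}} \sum_{I \,:\, {}^{t}p = {\sf Ker}(I)} \prod_{a=1}^{k} \big(S^{\otimes k}\big)_{i_a, i_{a'}}.
\]
Since $S$ is a permutation matrix, $\big(S^{\otimes k}\big)_{i_a,i_{a'}} = \prod_{a} \mathbf{1}[i_a = S(i_{a'})]$. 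So a tuple $I$ contributes exactly $1$ when its kernel partition is precisely ${}^{t}p$ (no coincidences beyond those forced by ${}^{t}p$) \emph{and} the constraint $i_a = S(i_{a'})$ holds for every $a$; otherwise it contributes $0$. Thus $N^{{\sf nc}(p\vee id)} m_{p^c}(S)$ is the number of ways to assign distinct values to the blocks of ${}^{t}p$ so that the permutation constraints are consistent. The first step is to analyze when this count is nonzero: the constraints $i_a = S(i_{a'})$, read across the blocks of ${}^{t}p$, impose relations among blocks; for these to be simultaneously satisfiable with \emph{generic distinct} block-values, the induced structure on each connected component (cycle) of $p$ must be rigid in a specific way — precisely a chain or a necklace. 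If some cycle of $p$ restricted (extracted) to that cycle is neither a chain nor a necklace, the permutation constraints either force an unwanted coincidence of block values (contradicting ${\sf Ker}(I) = {}^{t}p$) or are outright inconsistent, giving count $0$; this is where the notion of \emph{parure} enters, and I would prove the contrapositive: a nonzero contribution forces $p$ to be a parure.

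Next, assume $p$ is a parure. Because $\rho_N$ and the trace are multiplicative over tensor factors and cycles of $p$ correspond to "independent" groups of vertices (a cycle is a block of $p\vee id$), the count factorizes over the cycles $c$ of $p$, and ${\sf nc}(p\vee id)$ is the number of cycles; so it suffices to treat a single cycle, i.e.\ an irreducible $p$. If that cycle is a necklace of true-length $s$, the defining data $(I,\sigma)$ with $\sigma$ irreducible means the blocks $b_1,\dots,b_s$ of $\{1,\dots,k\}$ are linked cyclically through $\sigma$, and the permutation constraints say the common value of block $b_l$ equals $S$ applied to the common value of block $b_{\sigma(l)}$ (up to transposition bookkeeping); chasing this around the $s$-cycle of $\sigma$ shows that a choice of one block value $x$ determines all others as iterated images under $S$, and consistency requires $S^{s}(x) = x$ while the intermediate iterates are distinct — exactly the condition counted by $N m_{(1,\dots,s)^c}(S)$. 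Matching normalizations (${\sf nc}$ of an irreducible partition and of a single $s$-cycle) gives $m_{p^c}(S) = m_{(1,\dots,|p|)^c}(S)$. For a chain associated with $(I,\sigma,i_0)$, one block is "open": the cyclic chain of constraints is broken at $i_0$, so a starting value $x$ propagates along the chain freely, the only requirement being that the $|p|$ iterates $x, S(x), \dots$ are pairwise distinct — i.e.\ the period of $x$ under $S$ is \emph{at least} $|p|+1$, i.e.\ not equal to any $l \le |p|$. Counting complements gives $m_{p^c}(S) = 1 - \sum_{l=1}^{|p|} m_{(1,\dots,l)^c}(S)$, since $m_{(1,\dots,l)^c}(S)$ is (up to normalization) the proportion of points of period exactly $l$.

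The main obstacle, and the step deserving the most care, is the structural claim in the first paragraph: showing that a nonvanishing contribution \emph{forces} each extracted cycle to be a chain or necklace, and conversely identifying the true-length bookkeeping so the normalizing powers of $N$ match. Concretely I would argue that within one cycle the permutation constraints $i_a = S(i_{a'})$ define a functional-graph structure on the blocks of ${}^{t}p$; requiring that distinct blocks get distinct values (so that ${\sf Ker}(I)$ is exactly ${}^{t}p$, not coarser) means each block has in-degree and out-degree $\le 1$ in this graph, which forces it to be a disjoint union of paths and cycles — and irreducibility (one cycle of $p$) forces a single path (chain) or single cycle (necklace). Handling the transposition ${}^{t}p$ versus $p$ correctly, and verifying that the leftover "free" choices contribute exactly the power $N^{{\sf nc}(p\vee id)}$ that cancels the prefactor, is the bookkeeping I would do carefully but which is otherwise routine. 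Everything else is a direct translation between "counting consistent distinct-value assignments" and "counting points of prescribed period," using that $m_{(1,\dots,l)^c}(S) = \frac{1}{N}\#\{i : i \text{ has period exactly } l \text{ under } S\}$.
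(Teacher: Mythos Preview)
Your proposal is correct and follows essentially the same strategy as the paper's own proof. The paper compresses the structural step into the two matrix identities $S_{i}^{j}S_{i}^{l}=S_{i}^{j}\delta_{l=j}$ and $S_{j}^{i}S_{l}^{i}=S_{j}^{i}\delta_{l=j}$ (equivalently: a permutation has a single nonzero entry in each row and column), from which it reads off directly that two top indices in the same block of $p$ must have their primed counterparts in the same block, and dually; your functional-graph formulation is just an explicit unpacking of that same observation, and your period-counting for necklaces and chains matches the paper's verbatim.
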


\begin{proof}
Let $u$ and $v$ two elements of $\{1,...,k\}$  which are in the same block of $p$. Then $u'$ and $v'$ must be in the same block of $p$ if one wants $m_{p^{c}}(S)$ not to be equal to zero. This is a consequence of the fact that for any $i,j,l \in \{1,...,N\}$, 
\begin{align}
\label{equat}
S_{i}^{j}S_{i}^{l} = S_{i}^{j} \delta_{l=j} \text{ and } S_{j}^{i} S^{i}_{l} = S_{j}^{i} \delta_{l=j}. 
\end{align}
The same result holds if one exchanges $k$ and $k'$. Yet, if $p$ is not a parure, these conditions on the blocks of $p$ are not satisfied, thus $m_{p^{c}}\left(S\right) = 0$.

The assertion about the exclusive moments when $p$ is a necklace is a direct consequence of Equations (\ref{equat}). Now, let us suppose that $p$ is a chain. Using Equations (\ref{equat}), we get that $m_{p^{c}}\left( S \right)$ is the fraction of elements of $ \{1,...,N\} $ which period in $S$ is strictly greater than $| p|$. Thus it is equal to one minus the fraction of elements of $\{1,...,N\}$ which period in $S$ is less than $| p|$. Since for any positive integer $l$, $m_{(1,...,l)^{c}}\left( S \right)$ is the fraction of elements of $ \{1,...,N\}$ which period in $S$ is equal to $l$, we get the following equality: 
\begin{align*}
m_{p^{c}}\left( S \right) = 1 - \sum_{l=1}^{| p|} m_{(1,...,l)^{c}} (S),
\end{align*}
which is the equality we had to prove.
\qed \end{proof}

When one considers permutation matrices, an interesting link occurs between moments and exclusive moments. 

\begin{proposition}
\label{lienmomentetexclu}
For any positive integer $k$, 
\begin{align*}
Tr\left[ S^{\otimes k} \circ \rho_N \left((1,...,k)\right) \right] = \sum_{d \in \mathbb{N}^{*}, d | k} Tr\left[ S^{\otimes d}  \circ \rho_N \left( (1,...,d)^{c}\right)\right]. 
\end{align*}
\end{proposition}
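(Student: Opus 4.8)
The plan is to compute both sides of the identity explicitly in terms of the cycle structure of $S$, using the combinatorial description of $\rho_N$ restricted to permutations. Recall that for a permutation $\tau \in \mathfrak{S}_k \subset \mathcal{P}_k$, the endomorphism $\rho_N(\tau)$ of $(\mathbb{C}^N)^{\otimes k}$ acts as the permutation of tensor legs associated with $\tau$, so that $\mathrm{Tr}\left[S^{\otimes k}\circ \rho_N(\tau)\right]$ is a sum over $I=(i_1,\dots,i_k)\in\{1,\dots,N\}^k$ of $\prod_{a=1}^k S_{i_{\tau(a)}}^{i_a}$ (up to the precise bookkeeping of primed/unprimed indices in the paper's convention). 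First I would unwind this for $\tau=(1,\dots,k)$ the long cycle: the sum collapses to $\sum_{i\in\{1,\dots,N\}}\prod_{a}S$-entries along the orbit, which equals $\#\{i : S^k(i)=i\}$, i.e. the number of elements of $\{1,\dots,N\}$ whose period in $S$ divides $k$. Equivalently, $\mathrm{Tr}\left[S^{\otimes k}\circ\rho_N((1,\dots,k))\right] = \sum_{d\mid k} (\text{number of elements of period exactly } d)$.

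Next I would do the same for the exclusive version $\rho_N((1,\dots,d)^c)$: by the defining formula for $\rho_N(p^c)$ (sum over $I$ with $p = {\sf Ker}(I)$ rather than $p\trianglelefteq{\sf Ker}(I)$), the trace $\mathrm{Tr}\left[S^{\otimes d}\circ\rho_N((1,\dots,d)^c)\right]$ counts exactly those $i$ whose period in $S$ is \emph{exactly} $d$ — this is essentially the content already used in the proof of Proposition 1.6, where $m_{(1,\dots,l)^c}(S)$ is identified with the fraction of elements of period $l$. So the right-hand side $\sum_{d\mid k}\mathrm{Tr}\left[S^{\otimes d}\circ\rho_N((1,\dots,d)^c)\right]$ counts elements whose period divides $k$, with each such element counted once (in the unique term $d = \text{its period}$). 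Comparing with the left-hand side computed in the previous step gives the identity.

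The only genuinely delicate point is getting the index bookkeeping right: one must check that $S^{\otimes k}\circ\rho_N((1,\dots,k))$ really does produce the "single cycle of length $k$" constraint $i_1\to i_2\to\cdots\to i_k\to i_1$ under $S$, and in particular that no spurious lower-dimensional contributions appear — this is handled exactly by the relations \eqref{equat} ($S_i^jS_i^l = S_i^j\delta_{l=j}$ and its transpose) already invoked in the proof of Proposition 1.6, which force the index pattern to be constant along a cycle. Once that is pinned down, both sides are manifestly equal to $\#\{i\in\{1,\dots,N\} : S^k(i)=i\}$, partitioned on the right according to the exact period. I expect this index-tracking step to be the main obstacle, but it is routine given the machinery of Section 2 of \cite{Gab1} and the computations already carried out in Proposition \ref{critereconv}; alternatively one can phrase the whole argument as the classical Möbius-type relation between "period divides $k$" and "period equals $d$ for $d\mid k$", which makes the summation over divisors transparent.
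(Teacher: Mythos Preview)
Your proposal is correct and follows essentially the same approach as the paper: both identify the left-hand side as the number of $i\in\{1,\dots,N\}$ whose period in $S$ divides $k$, identify each term on the right as the number of $i$ with period exactly $d$, and conclude by partitioning according to exact period. The paper is slightly more direct in that it simply notes $\mathrm{Tr}\big[S^{\otimes k}\circ\rho_N((1,\dots,k))\big]=\mathrm{Tr}(S^k)$ without the index-bookkeeping discussion you flag as delicate; that identity is immediate from the fact that $\rho_N$ of a permutation acts by permuting tensor factors, so invoking the relations \eqref{equat} here is unnecessary (though not wrong).
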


\begin{proof}
This is due to the fact that for any positive integer $k$, 
$$Tr\left[ S^{\otimes k} \circ \rho_N \left((1,...,k)\right) \right]= Tr\left(S^{k}\right)$$ is equal to the number of elements $i \in \{1,...,N\}$ whose period divides $k$ and $Tr\left[ S^{\otimes d}  \circ \rho_N \left( (1,...,d)^{c}\right)\right]$ is equal to the number of elements $i \in \{1,...,N\}$ whose period is equal to $d$.
\qed \end{proof}

\subsubsection{Criterion of non $\mathfrak{S}$-freeness}
Let us state some consequence of Lemma \ref{eq:nonlibre} when one considers random matrices whose entries are equal either to $0$ or $1$. Recall that ${\sf 0}_2$ is the partition $\{\{1,2,1',2'\}\}$ in $\mathcal{P}_2$. Let us recall that when, in an observable or in a coordinate number, we do not put the index $N$, it means that we take the limit as $N$ goes to infinity.

\begin{lemma}
\label{calculkappa}
Let $S = \left(S_{N}\right)_{N \in \mathbb{N}}$ be an element of $L^{\infty^{-}}\!\otimes \mathcal{M}(\{0,1\})$. Let us suppose that $S$ converges in $\mathcal{P}$-distribution then: 
\begin{align*}
\mathbb{E}\kappa_{{\sf 0}_{2}} [S] = \mathbb{E}m_{id_1}[S] - \mathbb{E}m_{id_2}[S]. 
\end{align*}
If the asymptotic $\mathcal{P}$-factorization property holds for $S$, then: 
\begin{align*}
\mathbb{E}\kappa_{{\sf 0}_{2}}\left[S\right] = \mathbb{E}m_{id_1}[S] \left(1- \mathbb{E}m_{id_1}[S]\right). 
\end{align*}
\end{lemma}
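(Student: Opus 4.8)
The plan is to unfold the two claimed identities from the definitions of $\mathcal{P}$-cumulants and $\mathcal{P}$-moments given in Section~\ref{sec:remind}, exploiting that the entries of $S_N$ lie in $\{0,1\}$. First I would analyze $\mathbb{E}\kappa_{{\sf 0}_2}[S]$. By Theorem~\ref{th:convergence}, $\mathbb{E}\kappa_{{\sf 0}_2}[S] = \sum_{p' \in [id, {\sf 0}_2]_{\mathcal{P}_2}} (\text{M\"obius-type coefficients})$; more directly, one inverts Equation~(\ref{eq:coord}) to express the cumulant indexed by ${\sf 0}_2$ in terms of the limiting $\mathcal{P}$-moments. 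Since ${\sf 0}_2 = \{\{1,2,1',2'\}\}$ is irreducible with ${\sf nc}({\sf 0}_2) = 1$ and ${\sf nc}({\sf 0}_2 \vee id) = 1$, the relevant interval $[id, {\sf 0}_2]_{\mathcal{P}_2}$ consists of exactly two partitions, $id_2$ and ${\sf 0}_2$ itself (the identity partition $\{\{1,1'\},\{2,2'\}\}$ has distance $1$ to ${\sf 0}_2$ and $0$ to itself). Using the first display of Theorem~\ref{th:convergence}, $\mathbb{E}m_{{\sf 0}_2}[S] = \mathbb{E}\kappa_{id_2}[S] + \mathbb{E}\kappa_{{\sf 0}_2}[S]$ and $\mathbb{E}m_{id_2}[S] = \mathbb{E}\kappa_{id_2}[S]$, hence $\mathbb{E}\kappa_{{\sf 0}_2}[S] = \mathbb{E}m_{{\sf 0}_2}[S] - \mathbb{E}m_{id_2}[S]$.

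Next I would identify $\mathbb{E}m_{{\sf 0}_2}[S]$ with $\mathbb{E}m_{id_1}[S]$, which is the crux that uses the $\{0,1\}$-valued hypothesis. Writing out $m_{{\sf 0}_2}(S_N)$ via Equation~(\ref{eq:mom}): since ${}^t{\sf 0}_2 = {\sf 0}_2$ and ${\sf nc}({\sf 0}_2 \vee id) = 1$, one gets $m_{{\sf 0}_2}(S_N) = \frac{1}{N}\,Tr(S_N^{\otimes 2}\rho_N({\sf 0}_2))$. Expanding $\rho_N({\sf 0}_2) = \sum_{i} E_i^i \otimes E_i^i$ over the single index running because all four vertices are in one block, the trace becomes $\sum_{i,j}(S_N)_{j}^{j}\cdots$ — more carefully, $Tr(S_N^{\otimes 2}\rho_N({\sf 0}_2)) = \sum_{i} \sum_{j} (S_N)_{i}^{j}(S_N)_{i}^{j}$ with the constraint forcing a single free index on the $\rho_N$ side; because $(S_N)_i^j \in \{0,1\}$ we have $((S_N)_i^j)^2 = (S_N)_i^j$, so this collapses to $\sum_{i,j}(S_N)_i^j = Tr(S_N \cdot J)$-type sum which equals $N \cdot m_{id_1}(S_N)$ after the correct bookkeeping (here $m_{id_1}(S_N) = \frac{1}{N}Tr(S_N)$ since ${\sf nc}(id_1 \vee id) = 1$ and $\rho_N(id_1) = \sum_i E_i^i$... wait — one must be careful: $m_{id_1}$ is the trace of $S_N$, whereas the diagonal-collapsing above gives the number of fixed points; I would verify that the idempotence $x^2 = x$ on $\{0,1\}$ entries is precisely what equates $\frac1N Tr(S_N^{\otimes 2}\rho_N({\sf 0}_2))$ with $\frac1N Tr(S_N)$, i.e. both count $\#\{i : (S_N)_i^i = 1\}$, dividing by $N$). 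Taking expectations and $N \to \infty$ yields $\mathbb{E}m_{{\sf 0}_2}[S] = \mathbb{E}m_{id_1}[S]$, giving the first identity.

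For the second identity, I would invoke the asymptotic $\mathcal{P}$-factorization hypothesis: $\mathbb{E}m_{id_2}[S] = \mathbb{E}m_{id_1 \otimes id_1}[S] = \mathbb{E}m_{id_1}[S]^2$, since $id_2 = id_1 \otimes id_1$ as partitions (both are $\{\{1,1'\},\{2,2'\}\}$ up to the tensor-product relabelling). Substituting into the first identity gives $\mathbb{E}\kappa_{{\sf 0}_2}[S] = \mathbb{E}m_{id_1}[S] - \mathbb{E}m_{id_1}[S]^2 = \mathbb{E}m_{id_1}[S](1 - \mathbb{E}m_{id_1}[S])$. The main obstacle I anticipate is purely notational bookkeeping in the second paragraph: correctly writing $\rho_N({\sf 0}_2)$ and $\rho_N(id_2)$ in coordinates and tracking which indices are summed, so that the idempotence $x = x^2$ for $x \in \{0,1\}$ is applied to the right entries and the normalizing powers of $N$ in Equations~(\ref{eq:mom}) and~(\ref{eq:coord}) match up; once the combinatorial identity ${\sf 0}_2 \leq id_2$ with a two-element interval is pinned down and the trace is computed honestly, the rest is immediate.
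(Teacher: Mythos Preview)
Your approach is essentially identical to the paper's: establish $\mathbb{E}\kappa_{{\sf 0}_2}[S] = \mathbb{E}m_{{\sf 0}_2}[S] - \mathbb{E}m_{id_2}[S]$ from the moment--cumulant relation, use the $\{0,1\}$-entries to get $m_{{\sf 0}_2}(S_N) = m_{id_1}(S_N)$, and then apply $\mathcal{P}$-factorization for the second identity. Two small cleanups: the distance $d(id_2,{\sf 0}_2)$ is $\tfrac12$, not $1$ (this does not affect your correct identification of the two-element interval), and your trace expansion should read $Tr(S_N^{\otimes 2}\rho_N({\sf 0}_2)) = \sum_i ((S_N)_{ii})^2$ with no second index $j$ --- you reach the right conclusion anyway, since this is exactly $\sum_i (S_N)_{ii} = Tr(S_N)$ by idempotence.
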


\begin{proof}
Indeed, we have $\mathbb{E}\kappa_{{\sf 0}_{2}}\left[S\right] =  \mathbb{E}m_{{\sf 0}_{2}}[S] - \mathbb{E}m_{id_2}[S]. $ Yet, for any integer $N$, $S_{N}$ is a matrix of zeros and ones, thus for any positive integer $N$, $ \mathbb{E}m_{{\sf 0}_{2}}\left[S_{N}\right] = \mathbb{E}m_{id_1}\left[S_{N}\right]$. This implies that: 
\begin{align*}
\mathbb{E}\kappa_{{\sf 0}_{2}} [S] = \mathbb{E}m_{id_1}[S] - \mathbb{E}m_{id_2}[S]. 
\end{align*}
The second assertion is a direct consequence of the $\mathcal{P}$-factorization property. 
\qed \end{proof}

This calculation allows us to state the following criterion of non-freeness for $\{0,1\}$-valued random matrices. 

\begin{proposition}
\label{critere}
Let $S_1$ and $S_2$ be two elements of $L^{\infty^{-}}\!\otimes \mathcal{M}(\{0,1\})$ which converge in $\mathcal{P}$-distribution, satisfy the asymptotic $\mathcal{P}$-factorization property and which are asymptotically $\mathcal{P}$-free. If $\mathbb{E}m_{id}[S_1] \notin \{0,1\}$ and $\mathbb{E}m_{id}[S_2] \notin \{0,1\}$, then $S_1$ and $S_2$ are not asymptotically free. 
\end{proposition}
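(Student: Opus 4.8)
The plan is to reduce Proposition \ref{critere} directly to Lemma \ref{eq:nonlibre} by verifying the single remaining hypothesis there, namely that $\mathbb{E}\kappa_{{\sf 0}_2}[S_1]\,\mathbb{E}\kappa_{{\sf 0}_2}[S_2]\neq 0$. All the other hypotheses of Lemma \ref{eq:nonlibre} — that $S_1$ and $S_2$ are $\mathcal{P}$-free and satisfy the asymptotic $\mathcal{P}$-factorization — are assumed outright in the statement of the proposition, so the only work is to compute the relevant cumulant and see that it does not vanish.

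First I would invoke Lemma \ref{calculkappa}. Since $S_1$ and $S_2$ are elements of $L^{\infty^{-}}\!\otimes\mathcal{M}(\{0,1\})$ that converge in $\mathcal{P}$-distribution and satisfy the asymptotic $\mathcal{P}$-factorization property, the second assertion of that lemma applies to each of them and gives
\begin{align*}
\mathbb{E}\kappa_{{\sf 0}_2}[S_1] = \mathbb{E}m_{id_1}[S_1]\left(1 - \mathbb{E}m_{id_1}[S_1]\right), \qquad \mathbb{E}\kappa_{{\sf 0}_2}[S_2] = \mathbb{E}m_{id_1}[S_2]\left(1 - \mathbb{E}m_{id_1}[S_2]\right).
\end{align*}
Next I would observe that a product $x(1-x)$ with $x\in[0,1]$ vanishes precisely when $x\in\{0,1\}$; here $x = \mathbb{E}m_{id_1}[S_i] = \mathbb{E}m_{id}[S_i]$, which lies in $[0,1]$ because each $S_{i,N}$ is a $\{0,1\}$-matrix (its normalized trace is between $0$ and $1$, and this passes to the limit). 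By hypothesis $\mathbb{E}m_{id}[S_1]\notin\{0,1\}$ and $\mathbb{E}m_{id}[S_2]\notin\{0,1\}$, so both factors $\mathbb{E}\kappa_{{\sf 0}_2}[S_1]$ and $\mathbb{E}\kappa_{{\sf 0}_2}[S_2]$ are strictly positive, and in particular their product is nonzero.

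Finally I would apply Lemma \ref{eq:nonlibre} with $M = S_1$ and $L = S_2$: all its hypotheses are now in place — $\mathcal{P}$-freeness, asymptotic $\mathcal{P}$-factorization, and $\mathbb{E}\kappa_{{\sf 0}_2}[S_1]\,\mathbb{E}\kappa_{{\sf 0}_2}[S_2]\neq 0$ — and the conclusion is exactly that $S_1$ and $S_2$ are not asymptotically free, which is what we had to prove. There is no real obstacle here; the only point requiring a moment of care is the bookkeeping identification $m_{id_1} = m_{id}$ and the remark that $\mathbb{E}m_{id}[S_i]$ automatically lies in $[0,1]$ for $\{0,1\}$-valued matrices, so that "$\notin\{0,1\}$" is genuinely equivalent to "$\mathbb{E}\kappa_{{\sf 0}_2}\neq 0$" via the formula $x\mapsto x(1-x)$.
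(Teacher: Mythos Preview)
Your proof is correct and is exactly the approach of the paper, which simply records that the result is a consequence of Lemmas \ref{eq:nonlibre} and \ref{calculkappa}; you have merely spelled out the two-line verification in detail. One minor remark: the observation that $\mathbb{E}m_{id}[S_i]\in[0,1]$ is not actually needed, since $x(1-x)=0$ iff $x\in\{0,1\}$ for any real $x$.
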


\begin{proof}
This is a consequence of Lemmas \ref{eq:nonlibre} and \ref{calculkappa}.
\qed \end{proof}

\subsubsection{Measures}
The following lemma is a special and easy case of the problem of moments. 
\begin{lemma}
\label{momentprob}
Let $(\kappa_n)_{n \in \mathbb{N}}$ be a sequence of positive numbers such that $\sum_{i=1}^{\infty} \kappa_i \leq \kappa_0$. There exists a unique measure $\mu$ on $\mathbb{U}$ whose weight is equal to $\kappa_0$ such that: 
\begin{align*}
\forall n \in \mathbb{N}^{*}, \int_{\mathbb{U}} z^{n} d\mu(z) = \sum_{d \in \mathbb{N}^{*}, d | n}\kappa_d.
\end{align*}
Besides $\mu$ is given by: 
\begin{align}
\label{eq:form}
\sum_{n \in \mathbb{N}^{*}} \sum_{k=0}^{n-1} \frac{\kappa_{n}}{n} \delta_{e^{\frac{2ik\pi}{n}}} + \left[\kappa_0-\sum_{i=1}^{\infty} \kappa_i\right] \lambda_{\mathbb{U}}, 
\end{align}
where we recall that $\lambda_{\mathbb{U}}$ is the uniform measure on the circle. 
\end{lemma}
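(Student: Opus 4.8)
The plan is to reduce everything to a classical Hausdorff-type moment problem on the circle by first exhibiting a candidate measure and then checking uniqueness. I would start by simply \emph{defining} $\mu$ via formula \eqref{eq:form}: set
\[
\mu = \sum_{n \in \mathbb{N}^{*}} \sum_{k=0}^{n-1} \frac{\kappa_{n}}{n}\, \delta_{e^{2ik\pi/n}} + \Big[\kappa_0 - \sum_{i=1}^{\infty}\kappa_i\Big]\lambda_{\mathbb{U}}.
\]
The hypothesis $\sum_{i\geq 1}\kappa_i \leq \kappa_0$ and $\kappa_i \geq 0$ guarantees that all coefficients are non-negative, so this is a genuine (finite, positive) measure; summing all the weights gives $\sum_{n\geq 1}\kappa_n + (\kappa_0 - \sum_{i\geq 1}\kappa_i) = \kappa_0$, which is the claimed total mass. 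One should note the double series converges absolutely since $\sum_n n\cdot\frac{\kappa_n}{n} = \sum_n \kappa_n < \infty$.

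Next I would verify the moment identity for this $\mu$. Fix $n \in \mathbb{N}^{*}$. The uniform measure contributes nothing to $\int z^n\,d\lambda_{\mathbb{U}}(z)$ since $n \neq 0$. For each $m \in \mathbb{N}^{*}$, the inner sum $\sum_{k=0}^{m-1} e^{2ik\pi n/m}$ is a geometric sum which equals $m$ if $m \mid n$ and $0$ otherwise (the roots of unity of order $m$, raised to the $n$-th power, sum to $m$ exactly when $m \mid n$). Hence
\[
\int_{\mathbb{U}} z^n \, d\mu(z) = \sum_{m \in \mathbb{N}^{*}} \frac{\kappa_m}{m} \sum_{k=0}^{m-1} e^{2ik\pi n/m} = \sum_{m \in \mathbb{N}^{*},\, m \mid n} \frac{\kappa_m}{m}\cdot m = \sum_{d \in \mathbb{N}^{*},\, d\mid n} \kappa_d,
\]
as desired. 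This settles existence.

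For uniqueness, suppose $\nu$ is any (finite, positive) measure on $\mathbb{U}$ of total mass $\kappa_0$ with $\int z^n\,d\nu = \sum_{d\mid n}\kappa_d$ for all $n \geq 1$. Then $\mu - \nu$ is a finite signed measure on $\mathbb{U}$, all of whose moments $\int z^n\,d(\mu-\nu)$ vanish for $n \geq 1$; taking complex conjugates (and using that $\overline{z^n} = z^{-n}$ on $\mathbb{U}$) they vanish for $n \leq -1$ as well, and for $n=0$ they vanish since both measures have mass $\kappa_0$. Since the trigonometric polynomials are dense in $C(\mathbb{U})$ by Stone--Weierstrass, this forces $\mu - \nu = 0$ by the Riesz representation theorem. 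Hence $\mu = \nu$. No step here is really an obstacle — the only mild subtlety is justifying the interchange of summation and integration in the existence step, which is immediate from absolute convergence of $\sum_n \kappa_n$, and keeping track of the $n=0$ versus $n\geq 1$ cases so that the uniform-measure piece is pinned down exactly by the total-mass constraint rather than by the moment conditions.
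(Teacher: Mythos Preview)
Your proof is correct and follows the same route as the paper: the paper simply remarks that any finite measure on $\mathbb{U}$ is determined by its non-negative moments and that one checks directly that the measure in \eqref{eq:form} has the prescribed moments. Your write-up supplies the details the paper omits (the roots-of-unity computation for existence and the Stone--Weierstrass argument for uniqueness), but the strategy is identical.
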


\begin{proof}
Any measure on the unit circle $\mathbb{U}$ is characterized by its non-negative moments. It is enough to see that the moments of the measure given by Equation (\ref{eq:form}) are equal to the ones expected. 
\qed \end{proof}

\begin{remark} Let $\mu$ be given by Equation (\ref{eq:form}), the weight of the purely atomic part of $\mu$ is equal to $\sum_{n \geq 1} \kappa_n$. 

\end{remark}

\subsection{Proofs of the theorems stated in Section \ref{generaltheorem}}
\label{proof}
Now that we have gathered all the notions and tools that we need, we can prove the theorems of Section \ref{generaltheorem}.
\begin{proof}[Theorem \ref{convergence1} and Theorem \ref{converlevy}]
For any positive integer $N$, let us consider $\lambda_N$ a conjugacy class of $\mathfrak{S}(N)$. Let us suppose that $\left(\lambda_N\right)_{N \in \mathbb{N}}$ converges as $N$ goes to infinity. For any positive integer $N$, let us consider $\left(S_t^{N}\right)_{t \geq 0}$ a $\lambda_{N}$-random walk on $\mathfrak{S}(N)$. Let $N$ and $k$ be two positive integers and let us define: 
\begin{align*}
G_k^{N} = \frac{d}{dt}_{| t=0} \mathbb{E}\left[\left(S_t^{N}\right)^{\otimes k}\right] = \frac{N}{\lambda_N(1^{c})}\frac{1}{\# \lambda_N} \sum_{\sigma \in \lambda_N} \big[ \sigma^{\otimes k} - id_N^{\otimes k} \big].
\end{align*}

Let $p$ be a partition in $\mathcal{P}_k$ and let $\sigma_N \in \lambda_N$. Let us remark that: 
\begin{align*}
m_{p^{c}} \left(G^{N}_k\right) = \frac{N}{\lambda_N(1^{c})} \left[ m_{p^{c}}\left(\sigma_N^{\otimes k}\right) - m_{p^{c}} \left( id_N^{\otimes k} \right)\right], 
\end{align*}
thus, using Proposition \ref{critereconv}, if $p$ is not a parure, then $m_{p^{c}} \left(G^{N}_k\right)  =0$.

Let us suppose that $p$ is an irreducible parure, then it is either a necklace or a chain. Let us suppose that $p$ is a necklace. Using Proposition \ref{critereconv}, we can suppose that it is a cycle. Let us suppose that $p = (1,...,k)$, then: 
\begin{align*}
m_{p^{c}}\left(G_k^{N}\right) = \frac{N}{\lambda_N(1^{c})} \left[ m_{(1,...,k)^{c}}\left(\sigma_N^{\otimes k}\right) - m_{(1,...,k)^{c}} \left( id_N^{\otimes k} \right)\right]. 
\end{align*}
If $k=1$, then: 
\begin{align*}
m_{p^{c}}\left(G_k^{N}\right) =\frac{N}{\lambda_N(1^{c})} \left[ \frac{\lambda_N(1)}{N} - \frac{N}{N} \right] = -1. 
\end{align*}
If $k \ne 1$, then: 
\begin{align*}
m_{p^{c}}\left(G_k^{N}\right) =\frac{N}{\lambda_N(1^{c})} \left[ \frac{\lambda_N(k)}{N}\right] = \frac{\lambda_N(k)}{\lambda_N(1^{c})} \underset{N \to \infty}{\longrightarrow} \lambda(k). 
\end{align*}
If $p$ is a chain, let us remark that, using again Proposition \ref{critereconv}:
\begin{align*}
m_{p^{c}}\left(G_k^{N}\right)& = \frac{N}{\lambda_N(1^{c})} m_{p^{c}}\left(\sigma_N^{\otimes k}\right) \\& = \frac{N}{\lambda_N(1^{c})} \left[1- \sum_{l=1}^{| p|} m_{(1,...,l)^{c}}\left(\sigma_N^{\otimes l}\right)\right] \underset{N \to \infty}{\longrightarrow} 1-\sum_{l=2}^{| p|} \lambda(l). 
\end{align*}

Thus, for any irreducible partition, $m_{p^{c}}\left(G_k^{N}\right)$ converges as $N$ goes to infinity. Yet, if $p$ is irreducible, any partition $p'$ which is coarser than $p$ is also irreducible. This implies that, for any irreducible partition, $m_{p}\left(G_k^{N}\right)$ converges as $N$ goes to infinity. 

Let us remark that for any partition $p$ in $\mathcal{P}_k$: 
\begin{align}
\label{eq:gsigma}
m_{p}\left(G_k^{N}\right) = \frac{N}{\lambda_N(1^{c})} \left[m_{p} \left(\sigma_N^{\otimes k} \right)-1 \right].
\end{align}
Thus, we have proved that for any irreducible partition $p$, $m_{p}(\sigma_{N}^{\otimes k})$ converges as $N$ goes to infinity. Besides, if $(\lambda_N)_{N \in \mathbb{N}}$ is evanescent, $\frac{\lambda_N(1^{c})}{N}$ goes to infinity: $\lim\limits_{N \to \infty} m_{p}(\sigma_{N}^{\otimes k}) = 1$. 
Let $r$ be a positive integer, let us consider $r$ irreducible partitions $p_1$, ..., $p_r$, we have: 
\begin{align*}
m_{p_1 \otimes...\otimes p_r} \left(G_k^{N} \right) &= \frac{N}{\lambda_N(1^{c})}\left[m_{p_1\otimes...\otimes p_r} \left(\sigma_N^{\otimes k} \right)-1 \right]\\
&= \frac{N}{\lambda_N(1^{c})}\left[\prod_{i=1}^{r}m_{p_i} \left(\sigma_N^{\otimes k} \right)-1 \right]\\
&=\sum_{i=1}^{r} \left( \frac{N}{\lambda_N(1^{c})} \left[m_{p_i} \left(\sigma_N^{\otimes k} \right)-1\right] \prod_{l=i+1}^{r} m_{p_l} \left(\sigma_N^{\otimes k} \right)\right)\\
&=\sum_{i=1}^{r} \left( m_{p_i}\left( G_k^{N}\right)\prod_{l=i+1}^{r} m_{p_l} \left(\sigma_N^{\otimes k} \right)\right).
\end{align*}
This proves that $m_{p_1 \otimes...\otimes p_r} \left(G_k^{N} \right)$ converges as $N$ goes to infinity. Thus for any positive integer $k$, for any partition $p \in \mathcal{P}_k$, $m_{p}\left( G_k^{N} \right)$ converges as $N$ goes to infinity. 

Using Theorem \ref{th:convlevy}, the family $\left( S_t^{N}\right)_{t \geq 0}$ converges in $\mathcal{P}$-distribution toward the $\mathcal{P}$-distribution of a multiplicative $\mathcal{P}$-L\'{e}vy process.

In particular, for any $t \geq 0$, $\left( S_t^{N}\right)_{N \in \mathbb{N}}$ converges in $\mathcal{P}$-distribution as $N$ goes to infinity. Using the discussion after Theorem \ref{th:conv} we deduce that the mean eigenvalue distributions of $\left( S_t^{N}\right)_{N \in \mathbb{N}}$ converges as $N$ goes to infinity to a probability measure $\overline{\mu}_{t}^{\lambda}$ defined on the circle $\mathbb{U}$. Besides, the measure $\overline{\mu}_{t}^{\lambda}$ is characterized by the fact that for any positive integer $n$: 
\begin{align*}
\int_{\mathbb{U}} z^{n} d\overline{\mu}_{t}^{\lambda} = \lim_{N \to \infty} \mathbb{E}m_{(1,...,n)}\left[S_{t}^{N}\right]. 
\end{align*}
Using Proposition \ref{lienmomentetexclu}, we get that: 
\begin{align*}
\int_{\mathbb{U}} z^{n} d\overline{\mu}_{t}^{\lambda} = \sum_{d \in \mathbb{N}^{*}, d| n} \lim_{N \to \infty} \mathbb{E}m_{(1,...,d)^{c}}\left[S_{t}^{N}\right]. 
\end{align*}
We are in the setting of Lemma \ref{momentprob} thus $\overline{\mu}_{t}^{\lambda}$ is equal to: 
\begin{align*}
\overline{\mu}_{t}^{\lambda} = \sum_{n \in \mathbb{N}^{*}} \sum_{k=0}^{n-1} \frac{m_{n^c}(t)}{n}\delta_{e^{\frac{2ik\pi}{n}}} + m_{\infty^{c}}(t) \lambda_{\mathbb{U}}, 
\end{align*}
where for any integer $n \in \mathbb{N}^{*}$, $m_{n^{c}}(t) =  \lim\limits_{N \to \infty} \mathbb{E}m_{(1,...,n)^{c}}\left[S_{t}^{N}\right]$ and $m_{\infty}^{c}(t)$ is such that $\overline{\mu_t}^{\lambda}(\mathbb{U})=1$. 

For any positive integer $k$, any $\sigma\in \mathfrak{S}_{k}$ and any $t \geq 0$, let us denote by $m_{\sigma^{c}}(t)$ the limit $\lim\limits_{N \to \infty} \mathbb{E}m_{\sigma^{c}}\left[S_{t}^{N}\right]$. Using Theorem \ref{th:convergence}, we know that: 
\begin{align*}
m_{\sigma^{c}}(t) = \mathbb{E}\kappa_{\sigma} \left[S_{t}\right]. 
\end{align*}
Using Theorem \ref{th:convlevy} and using the same notations as for this theorem, we get that $m_{\sigma^{c}}(t) $ satisfies the system of equations, $\forall t_0 \geq 0$, $\forall k \in \mathbb{N}^{*}$, $\forall \sigma_0 \in \mathfrak{S}_k$: 
\begin{align*}
\frac{d}{dt}_{| t=t_0} m_{\sigma_{0}^c}(t) = \sum_{p_1 \in \mathcal{P}_k, p_2\in \mathcal{P}_k| p_1 \circ p_2 = \sigma_0, p_1 \prec \sigma_0} \kappa_{p_1}(G_k) \mathbb{E}\kappa_{p_2}[S_{t_0}]. 
\end{align*}
Yet, we saw that if $p_1$ and $p_2$ are two partitions such that $ p_1 \circ p_2 = \sigma_0$ and $p_1 \prec \sigma_0$, then $p_1$ and $p_2$ are two permutations and $p_1 \in [id, \sigma_0]_{\mathcal{P}_k}\cap \mathfrak{S}_k$. Thus $\forall t_0 \geq 0$, $\forall k \in \mathbb{N}^{*}$, $\forall \sigma_0 \in \mathfrak{S}_k$: 
\begin{align*}
\frac{d}{dt}_{| t=t_0} m_{\sigma_{0}^c}(t) = \sum_{\sigma \in \mathfrak{S}_k | \sigma \leq \sigma_0} \kappa_{\sigma}(G_k) m_{^{t}\sigma \sigma_0}(t_0). 
\end{align*}
Using again the fact that for any permutation $\sigma$, $\kappa_{\sigma}[G_k] = \lim\limits_{N \to \infty} m_{\sigma^{c}}\left(G_k^{N}\right)$, we only need to compute $\lim\limits_{N \to \infty} m_{\sigma^{c}}\left(G_k^{N}\right)$ for any permutation in $\mathfrak{S}_k$ in order to finish the proof of Theorem \ref{convergence1}. Let $\sigma_0$ be a permutation in $\mathfrak{S}_k$ and let us compute $m_{\sigma_0^{c}}\left( G_k^{N}\right)$. If $\sigma_0$ is equal to $id_k$, then: 
\begin{align*}
m_{id_k^{c}}\left( G_k^{N}\right) &= \frac{N}{\lambda_N(1^{c})}\frac{1}{\# \lambda_N} \sum_{\sigma \in \lambda_N} \left[ m_{id_k^{c}}\left(\sigma^{\otimes k}\right) - m_{id_k^{c}} \left( id_N^{\otimes k} \right)\right]\\
&=\frac{N}{\lambda_N(1^{c})} \left[ m_{id_k^{c}}\left(\sigma^{\otimes k}\right) - m_{id_k^{c}} \left( id_N^{\otimes k} \right)\right], 
\end{align*}
where $\sigma$ is any permutation in $\lambda_N$. We will use the following convention: for any $n$ and $m$ in $\mathbb{N}$ such that $n-m+1 \leq 0$, $\frac{n!}{(n-m)!} = 0$. If $\sigma$ is a permutation in $\lambda_N$: 
\begin{align*}
 m_{id_k^{c}} \left(\sigma^{\otimes k}\right) = \frac{1}{N^{k}}  \frac{\lambda_N(1)!}{\left(\lambda_N(1)-k\right)!}. 
\end{align*}
Thus: 
\begin{align*}
m_{id_k^{c}}\left( G_k^{N}\right) &=  \frac{N}{\lambda_N(1^{c})} \frac{1}{N^{k}}  \left[\frac{\lambda_N(1)!}{\left(\lambda_N(1)-k\right)!} - \frac{N!}{(N-k)!}\right]\\
&=\frac{N}{\lambda_N(1^{c})}  \left[ \prod_{i=0}^{k-1} \left( 1- \frac{\lambda_N(1^{c}) + i}{N} \right)- \prod_{i=0}^{k-1} \left(1-\frac{i}{N}\right)\right].
\end{align*}
Let us denote by $\alpha$ the limit of $\frac{\lambda_N(1^{c})}{N}$ as $N$ goes to infinity. We get: 
\begin{align*}
\lim_{N \to \infty} m_{id_k^{c}}\left( G_k^{N}\right)  = \left\{
   \begin{array}{ll}
       -k &\text{ if }(\lambda_N)_{N \in \mathbb{N}} \text{ is evanescent, } \\
       \frac{1}{\alpha}( (1-\alpha)^{k} - 1) &\text{ if }(\lambda_N)_{N \in \mathbb{N}} \text{ is macroscopic. }
    \end{array}
\right. 
\end{align*}

Now, let us suppose that $\sigma_0$ is not equal to $id_k$. Let $\sigma$ be in $\lambda_N$, since $m_{\sigma_0}\left(id_N^{\otimes k}\right)= 0$: 
\begin{align*}
m_{\sigma_0^{c}}\left( G_k^{N}\right) = \frac{N}{\lambda_N(1^{c})} m_{\sigma_0^{c}} \left( \sigma^{\otimes k}\right). 
\end{align*}
Let us denote by $[\sigma_0]$ the conjugacy class of $\sigma_0$. As we already saw for $\lambda_N$, we can see $[\sigma_0]$ as a vector $([\sigma_0](i))_{i=1}^{N}$. It is easy to see that: 
\begin{align*}
m_{\sigma_0^{c}} \left( \sigma^{\otimes k}\right) =\frac{1}{N^{{\sf nc}(\sigma_0\vee id)}} \prod_{i=1}^{k} \frac{\left( \frac{\lambda_N(i)}{i}\right)!}{ \left(\frac{\lambda_N(i)}{i} - \frac{[\sigma_0](i)}{i}\right)!} i^{\frac{[\sigma_0](i)}{i}}, 
\end{align*}
thus: 
\begin{align*}
m_{\sigma_0^{c}}\left( G_k^{N}\right) = \frac{N}{\lambda_N(1^{c})} \frac{1}{N^{{\sf nc}(\sigma_0\vee id)}} \prod_{i=1}^{k} \frac{\left( \frac{\lambda_N(i)}{i}\right)!}{ \left(\frac{\lambda_N(i)}{i} - \frac{[\sigma_0](i)}{i}\right)!} i^{\frac{[\sigma_0](i)}{i}}.
\end{align*}
Let us notice that ${\sf nc}(\sigma_0\vee id) = \sum_{i=1}^{k} \frac{[\sigma_0](i)}{i}$. Thus $m_{\sigma_0^{c}}\left( G_k^{N}\right)$ is equal to: 
\begin{align*}
 \frac{N}{\lambda_N(1^{c})}\left( \prod_{i=2}^{k} \frac{1}{N^{\frac{[\sigma_0](i)}{i}}}  \frac{\left( \frac{\lambda_N(i)}{i}\right)!}{ \left(\frac{\lambda_N(i)}{i} - \frac{[\sigma_0](i)}{i}\right)!} i^{\frac{[\sigma_0](i)}{i}}\right) \frac{1}{N^{[\sigma_0](1)}}\frac{\lambda_N(1)!}{ \left( \lambda_{N}(1) - [\sigma_0](1)\right)!}.
\end{align*}

We recall that for any $i \geq 2$, there exists $\lambda(i)$ such that $\frac{\lambda_N(i)}{ \lambda_N(1^{c})}$ converges to $\lambda(i)$ as $N$ goes to infinity, and $\lim_{N \to \infty} \frac{\lambda_N(1)}{N} \to 1-\alpha$. Thus, as $N$ goes to infinity, $m_{\sigma^{c}}\left( G_k^{N}\right)$ has the same limit as: 
\begin{align*}
\frac{N}{\lambda_{N}(1^{c})}\left( \prod_{i=2}^{k} \left( \frac{\lambda_N(i)}{\lambda_N(1^{c})} \frac{\lambda_N(1^{c})}{N}\right)^{\frac{[\sigma_0](i)}{i}} \right)\left(\frac{ \lambda_N(1)}{N}\right)^{[\sigma_0](1)}, 
\end{align*}
or the same limit as: 
\begin{align*}
\left(\frac{\lambda_N(1^{c})}{N}\right)^{{\sf nc}(\sigma_0 \vee id) - [\sigma_0](1) - 1} \prod_{i=2}^{k} \left( \frac{\lambda_N(i)}{\lambda_N(1^{c})}\right)^{\frac{[\sigma_0](i)}{i}}  \left(\frac{ \lambda_N(1)}{N}\right)^{[\sigma_0](1)}.
\end{align*}
This implies that: 
\begin{align*}
\lim_{N \to \infty} m_{\sigma_0^{c}}\left( G_k^{N}\right) = \alpha^{{\sf nc}(\sigma_0 \vee id) - [\sigma_0](1) - 1} \left(\prod_{i=2}^{k} \left(\lambda(i) \right)^{\frac{[\sigma_0](i)}{i}} \right) (1-\alpha)^{[\sigma_0](1)}.
\end{align*}
Let us remark that, since $\sigma_0\ne id_k$, ${\sf nc}(\sigma_0 \vee id_k) - [\sigma_0](1) - 1$ is always non negative. So the following formula has a meaning even if $\alpha = 0$. Using these calculations, we recover the system of differential equations in Theorem \ref{convergence1}. 

At last, let us suppose that $(\lambda_{N})_{N \in \mathbb{N}}$ is evanescent and let us prove that $(S_{t}^{N})_{t \geq 0}$ does not converge toward a free multiplicative L\'{e}vy process. In order to do so, we will prove that the increments of $\left( S_t^{N}\right)_{t \geq 0}$ are not asymptotically free as $N$ goes to infinity. Let $t_1$ and $t_2$ be two positive reals. For any positive integer $N$, let $S_{t_2}^{'N}$ be a random variable which has the same law as $S_{t_2}^{N}$ and which is independent with $S_{t_1}^{N}$. Since $\left( S_t^{N}\right)_{t \geq 0}$ is a L\'{e}vy process, it is enough to prove that $S_{t_1}^{N}$ and $S_{t_2}^{'N}$ are not asymptotically free as $N$ goes to infinity. Using Theorem \ref{th:free}, we already know that  $S_{t_1}^{N}$ and $S_{t_2}^{'N}$ are asymptotically $\mathcal{P}$-free. Besides, for any real $t\geq 0$, $\mathbb{E}m_{id_1}[S_t] = \mathbb{E}m_{id_1^{c}}[S_t]$: using the differential system of equations proved in Theorem \ref{convergence1}, for any $t_0 >0 $, $\mathbb{E}m_{id_1}[S_{t_0}] = e^{-t_0} \notin \{ 0,1\}$. In the following, we will see that the asymptotic $\mathcal{P}$-factorization property holds for $\left(S_{t_1}^{N}\right)_{N \in \mathbb{N}}$ and $\left(S_{t_2}^{'N}\right)_{N \in \mathbb{N}}$: using Proposition \ref{critere}, $S_{t_1}^{N}$ and $S_{t_2}^{'N}$ are not asymptotically free. 
\qed \end{proof}

We have proved the convergence in $\mathcal{P}$-distribution: let us understand when the convergence holds in probability or not. 

\begin{proof}[Theorem \ref{converfac}]
Let us suppose that $\left( \lambda_N \right)$ is macroscopic. The second equation in Theorem \ref{th:convergence} implies that $\mathbb{E}m_{id_2}[S_t] =\mathbb{E}m_{id_2^{c}}[S_t]$. Using this equality and  the system of differential equations satisfied by the limits of the observables, it is easy to see that the family $\left(S_{t}^{N}\right)_{N \in \mathbb{N}}$ does not satisfy the asymptotic $\mathcal{P}$-factorization property: for any $t>0$: 
\begin{align}
\label{eq:nonfact}
\lim\limits_{N \to \infty} \mathbb{E}m_{id_2}\left[S_t^{N}\right] \neq \left(\lim\limits_{N \to \infty} \mathbb{E}m_{id_1}\left[S_t^{N} \right]\right)^{2}. 
\end{align}

Let us suppose that $\left( \lambda_N \right)_{N \in \mathbb{N}}$ is evanescent. Let $p$ be a partition in $\mathcal{P}_k$; we can suppose, up to a permutation of the columns, that there exist $r$ irreducible partitions $p_1$, ..., $p_r$ such that $p$ is equal to $p_1 \otimes...\otimes p_r$. We saw in the proof of Theorem \ref{convergence1} that for any integer $N$:  
\begin{align*}
m_{p}\left(G_k^{N}\right) = \sum_{i=1}^{r}\left( m_{p_i}\left(G_k^{N}\right) \prod_{l=i+1}^{r}m_{p_l} \left( \sigma_N^{\otimes k}\right)\right),
\end{align*}
where $\sigma_N \in \lambda_N$. Using Equation $(\ref{eq:gsigma})$, $\lim\limits_{N \to \infty} m_{p}\left( \sigma_N^{\otimes k}\right) = 1$. Thus, denoting by $m_{p}\left(G_k\right) $ the limit of $m_{p}\left(G_k^{N}\right)$: 
\begin{align*}
m_{p}\left(G_k\right) = \sum_{i=1}^{r}  m_{p_i}\left(G_k\right).
\end{align*}
 The last equation implies that $ \left( G_{k}^{N}\right)_{k,N}$ weakly condensates. By Theorem \ref{th:convlevy}, the process $\left(\left(S_{t}^{N}\right)_{t \geq 0}\right)_{N \in \mathbb{N}}$ satisfies the asymptotic $\mathcal{P}$-factorization property and, using Theorem \ref{th:conv}, it  converges in probability in $\mathcal{P}$-distribution. 

For a positive real $t$, let $\mu_{t}^{\lambda_N}$ the random eigenvalue distribution of $S_t^{N}$.  Since $\left(S_t^{N}\right)_{N \in \mathbb{N}}$ converges in $\mathcal{P}$-distribution, the measures $\mu_{t}^{\lambda_N}$ converge in law to a random measure on $\mathbb{U}$, denoted by $\mu_{t}^{\lambda}$. The measure $\mu_{t}^{\lambda}$ is not random if and only if  $\left(\left(S_{t}^{N}\right)_{t \geq 0}\right)_{N \in \mathbb{N}}$ satisfies the asymptotic $\mathcal{P}$-factorization property: the measure $\mu_{t}^{\lambda}$ is not random if and only if $(\lambda_N)_{N \in \mathbb{N}}$ is evanescent. \qed \end{proof}

From now on, we will suppose that $\left(\lambda_N\right)_{N \in \mathbb{N}}$ is evanescent: the limit of the eigenvalue distributions is non-random. Let us compute this limiting measure. 

\begin{proof}[Theorem \ref{permutations}]
We recall that we already used the following notation: for any positive integer $k$, any $\sigma\in \mathfrak{S}_{k}$ and any $t \geq 0$, we denote by $m_{\sigma^{c}}(t)$ the limit $\lim\limits_{N \to \infty} \mathbb{E}m_{\sigma^{c}}\left[\left(S_{t}^{N}\right)^{\otimes k}\right]$. Besides, we proved that the family $\left( m_{\sigma^{c}}(t)\right)_{t,\sigma}$ satisfies the system of differential equations stated in Theorem \ref{convergence1}. Since we suppose that $(\lambda_N)_{N \in \mathbb{N}}$ is evanescent, for any $t_0\geq 0$ and any $\sigma_0 \in \mathfrak{S}_k$, $\frac{d}{dt}_{| t=t_0} m_{\sigma_{0}^{c}(t)}$ is equal to: 
\begin{align*}
-k m_{\sigma_0^{c}}(t_0) +\!\!\!\!\! \sum_{\sigma \in \mathfrak{S}_{k}\setminus \{id\}, \sigma \leq \sigma_0}  0^{{\sf nc}(\sigma \vee id) - [\sigma](1) - 1}\! \left(\prod_{i=2}^{k} \left(\lambda(i)\right)^{\frac{[\sigma](i)}{i}} \right) m_{(^{t}\sigma \sigma_0)^{c}}(t_0), 
\end{align*}
yet ${\sf nc}(\sigma \vee id) - [\sigma](1) - 1 = 0$ if and only if $\sigma$ is a cycle. Thus, if we set $m_{n^{c}}(t) = m_{(1,...,n)^{c}}(t)$, for any $t_0 \geq 0$ and any positive integer $n$: 
\begin{align}
\label{eq:diff}
\frac{d}{dt}_{| t=t_0} m_{n^{c}(t)}=  -n m_{n^{c}}(t_0) +  \sum_{k=2}^{n} \sum_{\sigma \in \mathfrak{S}_{n},\ \sigma \text{ is a }k\text{-cycle}, \sigma \leq (1,...,n)} \!\!\!\!\!\!\!\!\!\!\!\!\!\!\!\!\!\!\!\! \lambda(k) m_{(^{t}\sigma (1,...,n))^{c}}(t_0). 
\end{align}
Using Theorem \ref{converfac}, the asymptotic $\mathcal{P}$-factorization property holds: we can wite Equation $(\ref{eq:diff})$ only in terms of $(m_{n^{c}}(t))_{n,t}$. For any positive integer $n$, any $t_0\geq 0$: 
\begin{align}
\label{equationdiff1}
\frac{d}{dt}_{| t=t_0} m_{n^{c}}(t) = -n m_{n^{c}}(t) + \sum_{k=2}^{n} \lambda(k)\frac{n}{k} \sum_{(n_1,...,n_k) \in (\mathbb{N}^{*})^{k} | \sum_{i=1}^{k}n_i = n} \prod_{i=1}^{k}  m_{n_{i}^{c}}.
\end{align}
Let us introduce the generating formal series of $\left(e^{nt} m_{n^{c}}(t)\right)_{n \geq 1}$: 
\begin{align*}
{\sf R}(t,z) = \sum_{n\geq 1} e^{nt}m_{n^{c}}(t) z^{n}.  
\end{align*}
Let us remark that ${\sf R}(0,z) = z$. The Equation $\left(\ref{equationdiff1}\right)$ can be written as:
\begin{align}
\label{eqR}
\partial_{t} {\sf R}(t,z) =  z \partial{\sf R}(t,z) {\sf LS}({\sf R})(t,z), 
\end{align} 
where we defined:  
\begin{align*}
{\sf LS}(z) = \sum_{n \geq 1} \lambda(n+1) z^{n}.
\end{align*}
Let us define ${\sf S}(t,z)$ the reciprocal formal series such that for any $t\geq 0$: 
\begin{align*}
{\sf S}(t,{\sf R}(t,z)) = z. 
\end{align*}
Let us remark that ${\sf S}(0,z) = z$. The Equation $(\ref{eqR})$ implies an equation on ${\sf S}$: 
\begin{align*}
\partial_{t}{\sf S}(t,z) = -{\sf LS}(z) {\sf S}(t,z). 
\end{align*}
Thus ${\sf S}(t,z)$ is given by $ {\sf S}(t,z)= z e^{-t {\sf LS}(z)}.$ Let $t \geq 0$ and let $n$ be a positive integer. Using the usual notations, since $e^{nt}m_{n^{c}}(t) = [z^{n}] {\sf R}(t, \bullet)$, we can compute $e^{nt}m_{n^{c}}(t)$ by using the Lagrange inversion. This implies that:

\begin{align*}
[z^{n}] {\sf R}(t,\bullet) = \frac{1}{n}\left[z^{n-1} \right] e^{tn {\sf LS}(z)}, 
\end{align*}
thus $m_{1^{c}}(t) = e^{-t}$ and for $n>1$: 
\begin{align*}
m_{n^{c}}(t) = e^{-nt} \sum_{k=1}^{n-1} t^{k} \frac{n^{k-1}}{k!} \sum_{(i_1,...,i_k) \in \left(\mathbb{N}^{*}\right)^{k}, i_1+...+i_k = n-1} \prod_{j=1}^{k} \lambda(i_j+1), 
\end{align*}
hence the assertions in Theorem \ref{permutations}. 
\qed \end{proof}

Let us prove the assertion on the existence of a phase transition for the random walks on the symmetric group. 

\begin{proof}[Theorem \ref{theo:tcritique}]
Let us suppose that the sequence $\left( \lambda_N\right)_{N\in \mathbb{N}}$ is evanescent. Let us show that the function $f(t) = \sum_{n=1}^{\infty} m_{n^{c}}(t)$, which is equal to $1-m_{\infty^{c}}(t)$ is continuous and converges to $0$ as $t$ goes to infinity. Indeed, we have:
\begin{align*}
f(t) = \sum_{k,n =0}^{\infty} \frac{1}{n} e^{-nt} t^{k} \frac{n^{k}}{k!} p(k,n), 
\end{align*}
where $p(k,n) = \sum_{(i_1,...,i_k) \in \mathbb{N}^{*}, \sum_{j=1}^{k} i_j = n-1} \prod_{j=1}^{k} \lambda(i_j+1)$. For any $k$ and $n$ in $\mathbb{N}$, $f_{k,n}(t)=\frac{1}{n} e^{-nt} t^{k} \frac{n^{k}}{k!} p(k,n)$ is continuous and goes to zero as $t$ goes to infinity, besides $f_{k,n}$ is non-negative and maximal at $t_{k,n} = \frac{k}{n}$ and using Stirling's formula, there exists a constant $C$ such that $f_{k,n}(t_{k,n})=\frac{1}{n}e^{-k}\frac{k^k}{k!} p(k,n) \leq C \frac{1}{k^{3/2}} p(k,n)$. In order to finish, one has to remark that: 
\begin{align*}
\sum_{n \in \mathbb{N}} p(k,n) = \sum_{(i_1,...,i_k) \in \mathbb{N}^{*}} \prod_{j=1}^{k} \lambda(i_j+1) = \left(\sum_{i\in \mathbb{N}^{*}} \lambda(i+1)\right)^{k} \leq 1, 
\end{align*}
thus $\sum_{k,n} f_{k,n}(t_{k,n}) <\infty$. This allows to apply the dominated convergence theorem, thus $f$ is a continous function and converges to zero as $t$ goes to infinity. 

Recall the definition of $t^{\lambda}_c$ given by Equation $(\ref{tcrit})$. Let us prove that $f(t) = 1$ for any $t\leq t_{c}^{\lambda}$ and $f(t)<1$ for any $t >t_{c}^{\lambda}$. Using the generating function ${\sf R}(t,\bullet)$ of $ e^{nt}m_{n^{c}}(t)$, we know that for any real $t \geq 0$: 
\begin{align*}
f(t) = {\sf R}\left(t,e^{-t}\right). 
\end{align*}
Using the fact that ${\sf S}(t,{\sf R}(t,e^{-t})) = e^{-t}$, and given that ${\sf S}(t,z) = ze^{-t{\sf LS}(z)}$, we get that: 
\begin{align*}
{\sf R}(t,e^{-t}) e^{-t{\sf LS}({\sf R}(t,e^{-t}))} = e^{-t}. 
\end{align*}
Thus for any $t \geq 0$, $f(t)$ is a solution in $[0,1]$ of $\Phi_t(z)=ze^{-t({\sf LS}(z)-1)} = 1$. The function $\Phi_t$ is log-concave on $[0,1]$, $\Phi_t(0) = 0$ and $\Phi_t(1) = e^{-(t{\sf LS}(1)-1)}$. If ${\sf LS}(1) = \sum_{i=2}^{\infty}\lambda(i)$ is not equal to one it must be stricly smaller than $1$, thus in this case for any $t>0$, $\Phi_t(1) >1$ and thus there exists a unique solution of $\Phi_t(z)=1$ in $[0,1]$ which is in fact in $]0,1[$. Thus we recover the delta function in Equation $( \ref{tcrit})$. Let us suppose now that $\sum_{i=2}^{\infty}\lambda(i)=1$. Then ${\sf LS}(1) = 1$: thus, since $\Phi_t$ is log-concave, there exists a solution $\nu_{t}$ (which is unique) of $\Phi_t(z)=1$ on $]0,1[$ if and only if $\Phi_t'(1)<0$. Since $\Phi_t'(1)= 1-t{\sf LS}'(1)$, we get that the critical time after which one observes a solution in $[0,1]$ which is different from the trivial solution $1$ is equal to $\frac{1}{{\sf LS}'(1)}$ which is the value of $t_c$ given by Equation $(\ref{tcrit})$. Since $f(t)$ is a continuous function which must converge to zero as $t$ goes to infinity, it must be equal to $1$ if $t \leq t_c$ and then it must be equal to $\nu_t$ if $t >t_c$.  
\qed \end{proof}

Let us finish with the proof of Corollary \ref{KozmaBerestycki}. 

\begin{proof}[Corollary \ref{KozmaBerestycki}]
Let $t$ be a non-negative real number, let $N$ be a positive integer, we have to understand: 
\begin{align*}
\frac{1}{N} d_{\mathfrak{S}(N)}\left(id_{N}, S_{t}^{N}\right) = 1- \frac{{\sf nc}\left( S_{t}^{N} \vee id_N \right)}{N}. 
\end{align*}
Recall that:
\begin{align*}
\frac{1}{N} {\sf nc}\left( S_{t}^{N} \vee id_N \right) = \sum_{k \geq 1} \frac{1}{k}m_{(1,...,k)^{c}} \left( S_{t}^{N}\right)
\end{align*}
since $m_{(1,...,k)^{c}} \left( S_{t}^{N}\right)$ is the fraction of integers in $\{1,...,N\}$ whose period in $S_{t}^{N}$ is equal to $k$. It remains to see if one can interchange the limit and the sum. For any positive integer $N$, for any $\sigma \in \mathfrak{S}(N)$, if $c_{k}(\sigma)$ is the numbers of cycles of size $k$ in $\sigma$, we have for any $K\in \mathbb{N}^{*}$:
\begin{align*}
\sum_{k \geq K} \frac{1}{k} m_{(1,...,k)^{c}} \left( \sigma^{\otimes k}\right) = \frac{1}{N} \sum_{k \geq K} c_{k}(\sigma) \leq \frac{1}{N} \frac{N}{K} = \frac{1}{K},  
\end{align*}
since there can not be more than $\frac{N}{K}$ cycles in $\sigma$ of size bigger than $K$. Thus: 
\begin{align*}
 \sup_{N} \sum_{k \geq K} \frac{1}{k}m_{(1,...,k)^{c}} \left( S_{t}^{N} \right) \underset{K \to \infty}{ \longrightarrow} 0, 
\end{align*}
almost surely. We can interchange limits and, since for any integer $k \geq 1$, $m_{(1,...,k)^{c}} \left( S_{t}^{N} \right)$ converges in probability to $m_{k^{c}}(t)$, we have the convergence in probability: 
\begin{align*}
\lim_{N \to \infty} \frac{1}{N} {\sf nc}\left( S_{t}^{N} \vee id_N \right)= \sum_{k \geq 1}\lim_{N \to \infty}  \frac{1}{k}m_{(1,...,k)^{c}} \left( S_{t}^{N} \right)= \sum_{k=1}^{\infty} \frac{1}{k} m_{k^{c}}(t). 
\end{align*}
This allows us to conclude the proof. 
\qed \end{proof}

\subsection{Log-cumulant calculations}
\label{sec:logcu}
\subsubsection{Brief reminder}
In the article \cite{Gab2}, we studied the log-cumulant invariant of free multiplicative infinitely divisible measures, defined by Equation $(20)$ of \cite{Gab2}. The log-cumulant transform is also an important tool in order to characterize free multiplicative L\'{e}vy processes. Since it could also be the case for more general $\mathcal{P}$-free multiplicative L\'{e}vy processes, we are interested in defining and computing the log-cumulant transform for some examples of multiplicative $\mathcal{P}$-L\'{e}vy processes. The theorem proved in this section is the first computation of the log-cumulants of multiplicative $\mathcal{P}$-L\'{e}vy processes which are not free multiplicative L\'{e}vy processes. 

We can generalize the definition of log-cumulants in the setting of multiplicative $\mathcal{P}$-L\'{e}vy processes: this notion was actually already defined in Definition 3.10 of \cite{Gab1} under an other name. Let $(a_t)_{t \geq 0}$ be a multiplicative $\mathcal{P}$-free L\'{e}vy process which takes values in a $\mathcal{P}$-tracial algebra. The infinitesimal $\boxtimes$-transform of $(a_t)_{t \geq 0}$ is also called the log-cumulant transform of $(a_t)_{t \geq 0}$. This definition can be applied to sequences of matricial L\'{e}vy processes which converge in $\mathcal{P}$-distribution.

For any integer $N$, let $(X^{N}_t)_{t \geq 0}$ be a L\'{e}vy process in the vector space of matrices of size $N$. Let us suppose that $(X^{N}_t)_{t \geq 0}$ converges in $\mathcal{P}$-distribution. In this section, we will always add the asumption that it satisfies the asymptotic $\mathcal{P}$-factorization property. Recall Section 4 of \cite{Gab1} where we defined a convolution $\boxtimes$ on $\left(\bigoplus_{k=0}^{\infty} \mathbb{C}[\mathcal{P}_k]\right)^{*}$. The reader can skip at first the definition of $\boxtimes$ since it will not be needed later. Recall the notion of $\mathcal{R}$-transform explained in Section \ref{sec:remind}: the family $(\mathcal{R}[X_t])_{t \geq 0}$ is a continuous semi-group for the $\boxtimes$-convolution. 

\begin{definition}
The log-cumulant transform of $\left((X^{N}_t)_{t \geq 0}\right)_{N \in \mathbb{N}}$ is the unique element in $ \left(\bigoplus_{k=0}^{\infty} \mathbb{C}[\mathcal{P}_k]\right)^{*}$, denoted by $\mathcal{LR}\left((X_{t})_{t \geq 0}\right)$, such that for any $t_0 \geq 0$:
\begin{align*}
\frac{d}{dt}_{| t=t_0} \mathcal{R}(X_t) = \mathcal{LR}\left((X_{t})_{t \geq 0}\right) \boxtimes \mathcal{R}(X_{t_0}).
\end{align*}
\end{definition}

We have an other characterization of the log-cumulant transform which is a consequence of the results in Section $6.2$ of \cite{Gab2}. 

\begin{lemma}
\label{lem:egaloggen}
Let us consider $G_k^{N} = \frac{d}{dt}_{| t = 0} \mathbb{E}\left[ \left( X_t^{N}\right)^{\otimes k}\right]$ seen as an element of $\mathbb{C}[\mathcal{P}_k(N)]$. Let us suppose that for any integer~$k$, $(G_{k}^{N})_{N}$ converges as $N$ goes to infinity, then for any positive integer $k$, any $p \in \mathcal{P}_k$, $\mathcal{LR}\left((X_t)_{t \geq 0}\right)(p) = \kappa_{p}(G_k)$. 
\end{lemma}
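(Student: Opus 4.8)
The plan is to unwind the two characterizations of the log-cumulant transform and match them at $t=0$. By definition, $\mathcal{LR}\left((X_t)_{t \geq 0}\right)$ is the unique element of $\left(\bigoplus_{k=0}^{\infty} \mathbb{C}[\mathcal{P}_k]\right)^{*}$ satisfying $\frac{d}{dt}_{|t=t_0}\mathcal{R}(X_t) = \mathcal{LR}\left((X_t)_{t \geq 0}\right) \boxtimes \mathcal{R}(X_{t_0})$ for all $t_0 \geq 0$. Evaluating at $t_0 = 0$ and using that $X_0 = \mathrm{id}$, so that $\mathcal{R}(X_0)$ is the neutral element for the $\boxtimes$-convolution (this is part of Section 4 of \cite{Gab1}), we obtain $\frac{d}{dt}_{|t=0}\mathcal{R}(X_t) = \mathcal{LR}\left((X_t)_{t \geq 0}\right)$. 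Hence it suffices to identify, for each $k$ and each $p \in \mathcal{P}_k$, the value $\left(\frac{d}{dt}_{|t=0}\mathcal{R}(X_t)\right)(p)$ with $\kappa_p(G_k)$.

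Next I would use that, by definition of the $\mathcal{R}$-transform in Section \ref{sec:remind}, $\mathcal{R}(X_t)(p) = \mathbb{E}\kappa_p(X_t)$, the limiting $\mathcal{P}$-cumulant, which under the standing hypotheses (convergence in $\mathcal{P}$-distribution plus asymptotic $\mathcal{P}$-factorization, so that Theorem \ref{th:convlevy} applies) is well defined and differentiable in $t$. So the goal reduces to showing $\frac{d}{dt}_{|t=0}\mathbb{E}\kappa_p(X_t) = \kappa_p(G_k)$. Here is where the coordinate expansion (\ref{eq:coord}) enters: the map $\mathbb{E}\left[(X_t^N)^{\otimes k}\right] \mapsto \left(\mathbb{E}\kappa_p(X_t^N)\right)_{p \in \mathcal{P}_k}$ is linear (it is just reading off coordinates in the basis $\{\rho_N(p)\}$, valid since $N > 2k$), and $\kappa_p(G_k^N)$ is defined by exactly the same formula applied to $G_k^N = \frac{d}{dt}_{|t=0}\mathbb{E}\left[(X_t^N)^{\otimes k}\right]$. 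Since differentiation at $t=0$ commutes with this fixed linear coordinate-extraction map, $\frac{d}{dt}_{|t=0}\mathbb{E}\kappa_p(X_t^N) = \kappa_p(G_k^N)$ for every $N$; then letting $N \to \infty$, using that $(G_k^N)_N$ converges and that the convergence of $\mathcal{P}$-cumulants is equivalent to that of moments (Theorem \ref{th:convergence}), gives $\frac{d}{dt}_{|t=0}\mathbb{E}\kappa_p(X_t) = \kappa_p(G_k)$, which is the claim.

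The step I expect to require the most care is the interchange of the limit $N \to \infty$ with the $t$-derivative at $0$: one must know that $t \mapsto \mathbb{E}\kappa_p(X_t)$ is genuinely differentiable in the limit with derivative equal to $\lim_N \frac{d}{dt}_{|t=0}\mathbb{E}\kappa_p(X_t^N)$, and that the limiting object $\mathcal{R}(X_t)$ indeed forms a $\boxtimes$-semigroup with $\mathcal{LR}$ as stated generator — but these are precisely the contents of Section 6.2 of \cite{Gab2} and of Theorem \ref{th:convlevy}, which I am entitled to invoke. With those in hand, the argument is the short chain: $\mathcal{LR} = \frac{d}{dt}_{|t=0}\mathcal{R}(X_t)$, evaluate at $p$, commute derivative with the linear coordinate map, pass to the limit.
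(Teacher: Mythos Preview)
Your proposal is correct and is precisely the natural unwinding of the definitions; the paper itself does not give an explicit proof but simply records the lemma as ``a consequence of the results in Section~6.2 of \cite{Gab2}'', which is exactly the reference you invoke for the semigroup structure of $(\mathcal{R}(X_t))_{t\geq 0}$ and the interchange of the $N\to\infty$ limit with the $t$-derivative. Your argument --- evaluate the defining equation at $t_0=0$, use that $\mathcal{R}(X_0)=\epsilon_{\boxtimes}$ is the neutral element, then commute the derivative with the linear coordinate-extraction map~(\ref{eq:coord}) at finite $N$ before passing to the limit --- is the expected one and matches what that citation encodes.
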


Let $\epsilon_{\boxtimes}$ be the linear form which sends, for any $k \in \mathbb{N}$, $\mathrm{id}_{k}$ on $1$ and any other partition $p$ on $0$. Since we supposed that $(X_t^{N})_{t \geq 0}$ satisfies the asymptotic $\mathcal{P}$-factorization property, the linear form $\mathcal{LR}\left((X_t)_{t \geq 0}\right)$ is a $\boxtimes$-infinitesimal character. This means that for any partitions $p_1$ and $p_2$, 
\begin{align*}
\mathcal{LR}\left((X_t)_{t \geq 0}\right)(p_1\otimes p_2) = \mathcal{LR}\left((X_t)_{t \geq 0}\right)(p_1) \epsilon_{\boxtimes }(p_2) + \epsilon_{\boxtimes}(p_1) \mathcal{LR}\left((X_t)_{t \geq 0}\right)(p_2). 
\end{align*}
This is equivalent to say that: 
\begin{enumerate}
\item for any positive integer $k$, $\mathcal{LR}\left((X_t)_{t \geq 0}\right)(id_k) = k \mathcal{LR}\left((X_t)_{t \geq 0}\right)(id_1)$, 
\item for any partition $p$ which can be written, up to a permutation of the columns, as $p'\otimes id_l$ with ${\sf nc}(p' \vee id) = 1$, $\mathcal{LR}\left((X_t)_{t \geq 0}\right)(p) = \mathcal{LR}\left((X_t)_{t \geq 0}\right)(p')$,
\item for all the other partitions, $\mathcal{LR}\left((X_t)_{t \geq 0}\right)(p) = 0$. 
\end{enumerate}

In the two first cases, we say that $p$ is {\em weakly irreducible}. We will also need a link between the coordinate numbers and the exclusive moments. For that, we need to recall the notion of {\em finer-compatible} which was defined in Definition $2.4$ of \cite{Gab1}. Let $p$ and $p'$ be two partitions in $\mathcal{P}_k$. We say that $p'$ is finer-compatible than $p$ and we denote it by $p' \sqsupset p$ if and only if $p'$ if finer than $p$ and ${\sf nc}(p') - {\sf nc}(p' \vee id) = {\sf nc}(p) - {\sf nc}(p \vee id)$. The following lemma is a consequence of Theorem $5.4$ of \cite{Gab1}. 

\begin{lemma}
\label{lem:tran}
For any positive integer $k$, any $p \in \mathcal{P}_k$, $m_{p^{c}}(G_k) = \sum_{p' \sqsupset p } \kappa_{p'} (G_k). $
\end{lemma}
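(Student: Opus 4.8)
The statement to establish is that for any positive integer $k$ and any $p \in \mathcal{P}_k$,
\begin{align*}
m_{p^{c}}(G_k) = \sum_{p' \sqsupset p } \kappa_{p'} (G_k),
\end{align*}
where $\sqsupset$ denotes the finer-compatible relation recalled just before the statement. The plan is to reduce this identity to a purely combinatorial relation between the finite-dimensional $\mathcal{P}$-cumulants of $G_k^{N}$ (the coordinates in the basis $\rho_N(p)$) and its exclusive moments, and then pass to the limit. Recall that by definition $G_k^{N}$ lies in ${\sf Vect}\{\rho_N(p)\mid p\in\mathcal{P}_k\}$, so its coordinate numbers $\kappa_p(G_k^N)$ are defined by the analogue of Equation (\ref{eq:coord}), and its exclusive moments $m_{p^c}(G_k^N)$ by the analogue of Equation (\ref{eq:mom}) with $p$ replaced by $p^c$. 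So the first step is to write out $m_{p^c}(G_k^N)$ by substituting the cumulant expansion of $G_k^N$ into the trace formula defining the exclusive moment.

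The key computation is the pairing $Tr(\rho_N(q)\,\rho_N({}^t p^{c}))$ (or the appropriately normalized version): because $\rho_N(q^c)$ is the ``exclusive'' summand that isolates $I$ with ${\sf Ker}(I)=q$ and $\rho_N(p)$ sums over $p\trianglelefteq {\sf Ker}(I)$, the product of such representation elements, traced, counts exactly those $I$ compatible with both partitions, and the leading order in $N$ picks out precisely the pairs of partitions that are finer-compatible. This is exactly the content of Theorem $5.4$ of \cite{Gab1}, which the excerpt tells us to invoke: that theorem already records the change-of-basis between the exclusive-moment functionals $m_{\bullet^c}$ and the coordinate functionals $\kappa_\bullet$ on $\mathbb{C}[\mathcal{P}_k(N)]$, and the transition matrix is (up to powers of $N$ that cancel under the normalization) the incidence matrix of the relation $\sqsupset$. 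So the second step is simply to apply Theorem $5.4$ of \cite{Gab1} to the element $G_k^N$, obtaining
\begin{align*}
m_{p^c}(G_k^N) = \sum_{p' \sqsupset p} \kappa_{p'}(G_k^N) + (\text{terms of strictly lower order in } N),
\end{align*}
where the error terms come from pairs $(p',p)$ with $p'$ finer than $p$ but ${\sf nc}(p') - {\sf nc}(p'\vee id) > {\sf nc}(p) - {\sf nc}(p\vee id)$, which contribute a negative power of $N$ after the normalization in (\ref{eq:mom}) and (\ref{eq:coord}).

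The final step is to take $N\to\infty$. We are assuming (this is the standing hypothesis of Section \ref{sec:logcu}, and it holds here because $(\lambda_N)$ converges and is evanescent, so by the proof of Theorem \ref{convergence1} each $(G_k^N)_N$ converges) that every coordinate number $\kappa_{p'}(G_k^N)$ converges as $N\to\infty$; hence both sides of the finite-$N$ identity converge, the lower-order error terms vanish in the limit, and we obtain the claimed equality with $m_{p^c}(G_k)$ and $\kappa_{p'}(G_k)$ denoting the respective limits. The only real obstacle is bookkeeping: one must make sure that the powers of $N$ attached to $\kappa_{p'}$ in (\ref{eq:coord}) and to the exclusive moment in (\ref{eq:mom}) combine so that the finer-compatible pairs contribute at order $N^0$ and all strictly-finer-but-not-compatible pairs are suppressed — but this is precisely the normalization already built into Theorem $5.4$ of \cite{Gab1}, so once that theorem is cited the verification is routine.
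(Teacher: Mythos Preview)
Your proposal is correct and takes essentially the same approach as the paper: the paper's entire proof is the one-line remark that the lemma ``is a consequence of Theorem $5.4$ of \cite{Gab1}'', and your plan likewise reduces everything to that change-of-basis theorem, merely spelling out the finite-$N$ expansion and the passage to the limit that the citation encodes.
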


\subsubsection{The log-cumulants of the limit of random walks on the symmetric group}
For any positive integer $N$, let $\lambda_N$ be a conjugacy of $\mathfrak{S}(N)$ and let us consider $\left(S_t^{N}\right)_{N \in \mathbb{N}}$ a $\lambda_N$-random walk on $\mathfrak{S}(N)$. For any positive integer $t$, let us denote by ${\sf S}^{\lambda}_t$ the family $\left( S_t^{N}\right)_{N \in \mathbb{N}}$. Let us suppose that $\left(\lambda_N\right)_{N \in \mathbb{N}}$ converges as $N$ goes to infinity and that it is evanescent. As we already did, for any $i \geq 2$, we set $\lambda(i) = \lim\limits_{N \to \infty}\frac{\lambda_N(i)}{\lambda_{N}\left( 1^{c}\right)}$ and $\lambda(1)=1$. In the following, we will need the notion of {\em ears}. 

\begin{definition}
Let $k$ be a positive integer, let $i$ be an element of $\{1,...,k\}$ and let $p \in \mathcal{P}_k$. We say that $\{i,i' \}$ is an ear of $p$ if $\{i, i'\}$ are in the same block of $p$. The set of ears of $p$ is denoted by ${\sf E}(p)$. The head of $p$, denoted by ${\sf H}(p)$, is the extraction of $p$ to $\{1,...,k, 1',...,k'\} \setminus \cup_{\{i,i'\} \in {\sf E}(p)} \{i,i'\}$. 
\end{definition}

Let us state the main result about the log-cumulant functional. Recall the notion of true-length defined in Definition \ref{parure}. 

\begin{theorem}
\label{th:logcum}
The log-cumulant transform of $\left({\sf S}^{\lambda}_{t}\right)_{t \geq 0}$, denoted by $\mathcal{LR}^{\lambda}$, is characterized by: 
\begin{enumerate}
\item $\mathcal{LR}^{\lambda}$ is a $\boxtimes$-infinitesimal character, 
\item for any positive integer $k$, for any irreducible partition $p \in \mathcal{P}_k$, if ${\sf H}(p)$ is not a parure then $\mathcal{LR}^{\lambda}(p)=0$, 
\item for any positive integer $k$, for any irreducible partition $p \in \mathcal{P}_k$, if ${\sf H}(p)$ is a necklace then: 
\begin{align*}
\mathcal{LR}^{\lambda}(p)= (-1)^{\# {\sf E}(p)} \lambda( | {\sf H}(p)|), 
\end{align*}
with the convention that $|\emptyset| = 0$ and $\lambda(0)=1$ and if ${\sf H}(p)$ is a chain then:  
\begin{align*}
\mathcal{LR}^{\lambda}(p)= (-1)^{\# {\sf E}(p)} \left( 1-\sum_{i=2}^{| {\sf H}(p) |} \lambda(i)\right). 
\end{align*}
\end{enumerate}
\end{theorem}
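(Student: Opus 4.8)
The plan is to identify the log-cumulant transform with the family of limiting cumulants of the generators $G_k^{N}$ and then to compute these cumulants on irreducible partitions by peeling off ears. Since $(\lambda_N)_{N\in\mathbb{N}}$ is evanescent, the proof of Theorem~\ref{convergence1} already shows that $(G_k^{N})_{N}$ converges for every $k$, so Lemma~\ref{lem:egaloggen} gives $\mathcal{LR}^{\lambda}(p)=\kappa_p(G_k)$ for every $p\in\mathcal{P}_k$. By Theorem~\ref{converfac} the family ${\sf S}^{\lambda}_t$ satisfies the asymptotic $\mathcal{P}$-factorization, so by the discussion preceding Lemma~\ref{lem:tran} the functional $\mathcal{LR}^{\lambda}$ is a $\boxtimes$-infinitesimal character: this is assertion~(1), and it reduces everything to computing $\kappa_p(G_k)$ for $p$ irreducible, since then $\mathcal{LR}^{\lambda}(p)=\mathcal{LR}^{\lambda}(p')$ when $p=p'\otimes id_l$ with $p'$ irreducible and $\mathcal{LR}^{\lambda}(p)=0$ otherwise. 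One also checks that inserting a pair $\{i,i'\}$ into a block of a partition never merges two of its cycles, so for $p$ irreducible the head ${\sf H}(p)$ is a single cycle, i.e. it is not a parure, or a necklace, or a chain: this is exactly the trichotomy of (2)--(3).

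Next I would compute all the exclusive moments of $G_k$. As in the proof of Theorem~\ref{convergence1}, exclusive moments depend only on the conjugacy class, so with $\epsilon_N=\lambda_N(1^{c})/N\to 0$ and $\sigma_N\in\lambda_N$ one has $m_{p^{c}}(G_k^{N})=\epsilon_N^{-1}\big(m_{p^{c}}(\sigma_N^{\otimes k})-m_{p^{c}}(id_N^{\otimes k})\big)$. By Proposition~\ref{critereconv} this vanishes unless $p$ is a parure, and for a parure $m_{p^{c}}$ factorizes over the cycles of $p$; writing the $\sigma_N$-factor of a cycle $c$ as $a_c+b_c\epsilon_N+o(\epsilon_N)$ one gets $(a_c,b_c)=(1,-1)$ for a length-one necklace (a block $\{i,i'\}$), $(0,\lambda(s))$ for a length-$s\ge 2$ necklace, $(0,1-\sum_{i=2}^{s}\lambda(i))$ for a chain of true-length $s$, while $id_N$ realizes exactly the $a_c$'s. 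Hence $m_{p^{c}}(G_k)=\sum_{c}b_c\prod_{c'\neq c}a_{c'}$, which is nonzero only when at most one cycle of $p$ is non-trivial; in particular, for $p$ irreducible (one cycle) $m_{p^{c}}(G_k)$ equals $-1$ if $p$ is the one-block partition, $\lambda(s)$ if $p$ is a necklace of true-length $s\ge 2$, and $1-\sum_{i=2}^{s}\lambda(i)$ if $p$ is a chain of true-length $s$.

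Finally I would invert the relation $m_{p^{c}}(G_k)=\sum_{p'\sqsupset p}\kappa_{p'}(G_k)$ of Lemma~\ref{lem:tran}. For $p$ irreducible with head $h={\sf H}(p)$ and ear set ${\sf E}(p)$, I would argue that the only finer-compatible refinements $p'$ whose $m_{(p')^{c}}(G_k)$ does not vanish are the parures obtained by detaching \emph{all} the ears from $h$ and regrouping them — one for each set partition of ${\sf E}(p)$, whose blocks become extra length-one-necklace cycles while $h$ stays an untouched cycle — all other refinements of $p$ being non-parures. The associated Möbius sum over the set-partition lattice of ${\sf E}(p)$ then collapses and produces the sign $(-1)^{\#{\sf E}(p)}$ in front of the exclusive moment of $h$ computed above, so that $\kappa_p(G_k)=(-1)^{\#{\sf E}(p)}m_{h^{c}}(G)$; with the conventions $|\emptyset|=0$ and $\lambda(0)=1$ this is precisely (2)--(3) (in particular the one-block partition has empty head and all $k$ strands as ears, so it gets $(-1)^{k}$, consistent with the value $-1$ of its exclusive moment after inversion along the full lattice of $k$ ears).

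The main obstacle is this last step: pinning down exactly which refinements $p'\sqsupset p$ enter the inversion for an irreducible $p$, and checking that the resulting Möbius computation over the set-partition lattice of ${\sf E}(p)$ does collapse to $(-1)^{\#{\sf E}(p)}$ times the head contribution (this plays, at the level of cumulants, the role of the Lagrange-inversion step in the proof of Theorem~\ref{permutations}). An alternative is to set up an ``ear-removal'' recursion by applying the partial trace $Tr_k$ over the last strand — which sends $\rho_N(p)$ with a size-$\ge 3$ ear-block at index $k$ to $\rho_N$ of the partition with strand $k$ deleted — to the identity $Tr_k(G_k^{N})=\lambda_N(1)\,G_{k-1}^{N}-N\,\rho_N(id_{k-1})$, and to induct on $\#{\sf E}(p)$ down to the base cases $k=1$ and the ear-free partitions (necklaces and chains), where $\kappa_p(G_k)=m_{p^{c}}(G_k)$ because such $p$ have no proper finer-compatible refinement; matching coefficients there requires the same bookkeeping, so either route meets the combinatorics squarely.
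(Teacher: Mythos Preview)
Your setup is essentially the paper's: use Lemma~\ref{lem:egaloggen} to identify $\mathcal{LR}^{\lambda}(p)=\kappa_p(G_k)$, invoke the asymptotic $\mathcal{P}$-factorization to get the $\boxtimes$-infinitesimal character property, and recall the values of $m_{p^{c}}(G_k)$ on irreducible parures. Where you diverge is in the last step. You try to compute $\kappa_p(G_k)$ by a direct M\"obius inversion of Lemma~\ref{lem:tran} over the finer-compatible order, and you yourself flag this as the obstacle. The paper avoids this entirely: it \emph{defines} a candidate $E$ by the formulas in (2)--(3), and then simply \emph{verifies} that $\sum_{p'\sqsupset p}E(p')=m_{p^{c}}(G_k)$ for every irreducible $p$. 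Since a $\boxtimes$-infinitesimal character is uniquely determined by these sums, $E=\mathcal{LR}^{\lambda}$ follows without any M\"obius function. The verification is short because $E(p')=0$ unless $p'$ is weakly irreducible, and the weakly irreducible $p'\sqsupset p$ are obtained by cutting off a \emph{subset} of the ears of $p$ (each detached ear becomes a separate $id_1$-column). The resulting sum is an alternating sum over subsets of ${\sf E}(p)$, which collapses to $\delta_{\#{\sf E}(p)=0}\,E({\sf H}(p))$ when $p\neq{\sf 0}_k$, and to $-1$ when $p={\sf 0}_k$.

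Your proposed inversion also has a concrete error in its combinatorics: you index the relevant refinements by \emph{set partitions} of ${\sf E}(p)$ whose blocks become merged length-one necklaces. But merging several ears into one block is not a refinement of $p$ unless those ears already sit in the same block of $p$, and in any case such merged blocks are not the objects that arise in the finer-compatible poset here. The correct indexing (whether you verify or invert) is by \emph{subsets} of ${\sf E}(p)$, and the relevant lattice is the Boolean lattice of ears, not the set-partition lattice; this is why the sign comes out as $(-1)^{\#{\sf E}(p)}$ via a binomial alternating sum rather than via a set-partition M\"obius function. Your alternative ``ear-removal'' recursion via $Tr_k$ could be made to work, but it is strictly more involved than the one-line verification the paper gives.
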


\begin{proof}
Since we supposed in this section that $(\lambda_N)_{N \in \mathbb{N}}$ converges and is evanescent, the family of random walks satisfy the asymptotic $\mathcal{P}$-factorization property. We have seen that it implies that $\mathcal{LR}^{\lambda}$ is a $\boxtimes$-infinitesimal character. Using the usual notations and Lemma \ref{lem:egaloggen}, for any partition $p \in \mathcal{P}_k$, $\mathcal{LR}^{\lambda}(p) = \kappa_{p}(G_k)$. Yet we have computed the exclusive moments of the generator in Section \ref{proof}: using Lemma \ref{lem:tran},  for any partition $p$ we known the value of $
\sum_{p' \sqsupset p } \mathcal{LR}^{\lambda}(p')$ which is equal to $m_{p^{c}}(G_k)$. Besides, since $\mathcal{LR}^{\lambda}$ is a $\boxtimes$-infinitesimal character, it is uniquely characterized by the equalites $
\sum_{p' \sqsupset p } \mathcal{LR}^{\lambda}(p') = m_{p^{c}}(G_k)$ for any irreducible partition $p \in \mathcal{P}_k$ and any integer $k$.  Recall that $| p|$ is the true-length of a partition $p$. Using the calculations in the proof of Theorem \ref{convergence1}, for any irreducible partition $p$: 
\begin{enumerate}
\item if $p$ is not a parure then $m_{p^{c}}(G_k)=0$,
\item if $p$ is a necklace of true-length equal to $1$ then $m_{p^{c}}(G_k)=-1$, 
\item if it is a necklace of true-length greater than $1$ then $m_{p^{c}}(G_k) = \lambda(| p|)$, 
\item  if $p$ is a chain then $m_{p^{c}}(G_k) = 1-\sum_{k=2}^{| p|} \lambda(k)$. 
\end{enumerate}
Let us consider the unique $\boxtimes$-infinitesimal character $E$ in $\left(\bigoplus_{k=0}^{\infty} \mathbb{C}[\mathcal{P}_k]\right)^{*}$ which satisfies the conditions $2.$ and $3.$ of the theorem. Let $p$ be an irreducible partition in $\mathcal{P}_k$, it remains to prove that: 
\begin{align*}
m_{p^{c}}(G_k) = \sum_{p' \sqsupset p} E({p'}). 
\end{align*}
Let us recall that $E({p'}) = 0$ if $p'$ is not weakly irreducible. Yet if $p'\sqsupset p$ and $p'$ is weakly irreducible, this means that one can get $p'$ by choosing a certain number of ears of $p$ and by cutting each of them in $p$. Let us consider the two possible cases $p={\sf 0}_k := \{\{1,...,k,1',...,k'\}\}$ or $p\neq {\sf 0}_k$. If $p={\sf 0}_k$, then using the fact that $E(id_k) = -k$: 
\begin{align*}
\sum_{p' \sqsupset {\sf 0}_{k}} E({p'}) &= - \left(\sum_{I \subset \{1,...,k\}, \#I >1} (-1)^{\#I -1} \right) - k  \\
&=- \left(\left(\sum_{l=0}^{k} (-1)^{l-1} \frac{k!}{l!(k-l)!} \right) + 1 \right) = -1 = m_{{\sf 0}_k^{c}}(G_k), 
\end{align*}
the last equality coming from the fact that ${\sf 0}_{k}$ is a necklace of true-length equal to $1$. 

If $p \ne {\sf 0}_k$, then: 
\begin{align*}
\sum_{p' \sqsupset p} E({p'})= \sum_{I \subset {\sf E}(p)} (-1)^{\#{\sf E}(p) - \# I} E({{\sf H}(p)}) = \delta_{\# {\sf E}(p) = 0}E({{\sf H}(p)}) &= \delta_{\# {\sf E}(p) = 0}E({p}) \\&= \delta_{\# {\sf E}(p) = 0} m_{p^{c}}(G_k) \\&= m_{p^{c}}(G_k),
\end{align*}
the last equality coming from the fact that the only irreducible parure in $\mathcal{P}_k$ which has ears is ${\sf 0}_k$. 
\qed \end{proof}

Let us remark that we could have tried to prove the last theorem by computing directly the coordinate numbers of $G_k^{N}$. For example, if one considers the random walk by transpositions:  
\begin{align*}
 G_{k}^{N} &= \frac{1}{N-1}\sum_{\tau \in \mathcal{T}_N} \left(\tau^{\otimes k}-Id_N^{\otimes k} \right) \\&= \frac{1}{2(N-1)}\sum_{i,j=1}^{N} \left( \left(Id_N-E_i^{i}-E_{j}^{j}+E_{i}^{j}+E_{j}^{i} \right)^{\otimes k}  -Id_N^{\otimes k}\right), 
\end{align*}
where $Id_N$ is the identity matrix of size $N$. We used the same notations as in Section \ref{sec:remind} and $\mathcal{T}_N$ is the set of transpositions. One can develop the tensor product and compute the coordinate numbers and their limits. Yet, it becomes less tractable as soon as one considers general random walks.

\section{Large $N$ limit of the $\mathfrak{S}(N)$-Yang-Mills measure}
\label{YMConv}

We will not go into all the details of the theory of planar Yang-Mills fields: one can read \cite{Gabholo} and \cite{Levy1} for an introduction on this subject. Yet, our presentation will be adequate so that the reader does not have to read other articles in order to understand the main result of this section, namely Theorem \ref{masterfield}. The general ideas are all taken from the article \cite{FGA} where asymptotics of unitary Yang-Mills measures are proved. In this article, the Yang-Mills measure with $\mathfrak{S}(N)$ gauge group is the planar Markovian holonomy field associated with the $\mathcal{T}_N$-random walk, where $\mathcal{T}_N$ is the set of transposition in $\mathfrak{S}(N)$. Yet, this section can easily be generalized to planar Markovian holonomy fields associated with any $\lambda_N$-random walk.

The set of paths $P$ in the plane is the set of rectifiable oriented curves drawn in $\mathbb{R}^{2}$ up to increasing reparametrization. The set of loops based at $0$, denoted by $L_0$, is the set of paths $l$ such that the two endpoints of $l$ are $0$. A loop is simple if it does not intersect with itself, except at the endpoints. We will consider $\sf Aff$ and $\sf Aff_0$ respectively the set of piecewise affine paths in $\mathbb{R}^{2}$ and the set of piecewise affine loops based at $0$. We can define two operations on $P$: the {\em concatenation} and the {\em inversion}. Given two paths $p_1$ and $p_2$ such that the starting point of $p_2$ is the arrival point  of $p_1$, it is natural to concatenate $p_1$ and $p_2$ by gluing them at the arrival point  of $p_1$: it defines a new path $p_1p_2$. The inversion of $p_1$, denoted by $p_1^{-1}$, is defined by changing the orientation of $p_1$. 
T. L\'{e}vy defined in \cite{Levy1}, the notion of {\em convergence with fixed endpoints}. For any $p \in P$, $\underline{p}$ denotes the starting point of $p$ and $\overline{p}$ denotes the arrival point of $p$. Let  $(p_n)_{n \in \mathbb{N}}$ be a sequence of paths. The sequence $(p_n)_{n \in \mathbb{N}}$ converges with fixed endpoints if and only if there exists a path $p$ such that for any integer $n$, $p_n$ and $p$ have the same endpoints and: $$| \l(p_n)-\l( p)| +  \inf \underset{t \in [0,1]}{\sup} | p_n(t) - p(t)| \underset{n \to \infty}{\longrightarrow}  0,$$ where the infimum is taken on the parametrizations of the paths $p_n$ and $p$ and where $\l(p )$ is the length of $p$. 

Let $J$ be a subset of $P$, let $G$ be a group. The set of {\em multiplicative functions} $\mathcal{M}ult\left(J,G\right)$ from $J$ to $G$ is the subset of functions $f$ in $G^{J}$ such that for any $p_1, p_2, p_3\in J$ such that $p_1p_2 \in J$ and $p_3^{-1} \in J$, one has: 
\begin{align*}
f(p_1p_2) &=f(p_2) f(p_1),\\
f\left(p_3^{-1}\right) &= f(p_3 )^{-1}.
\end{align*}
 For any $p \in P$, we define $h_p$ or, with an abuse of notation, $h(p)$, as the evaluation on $p$: 
\begin{align*}
h_p: \mathcal{M}ult(J,G) &\to G\\
 h &\mapsto h( p).  
\end{align*}

We are going to define a gauge-invariant measure on the set of multiplicative functions from $J$ to $\mathfrak{S}(N)$. Thus we endow $\mathcal{M}ult(J,\mathfrak{S}(N))$ with the cylinder $\sigma$-field $\mathcal{B}$ which is the trace on $\mathcal{M}ult(J,G)$ of the cylinder $\sigma$-field on $\mathfrak{S}(N)^{J}$. Let us denote by $V$ the set $\{x \in \mathbb{R}^{2}, \exists\ p \in J, x= \underline{p} \text{ or } x= \overline{p}\}$. For any function $j: V \to \mathfrak{S}(N)$ and any $h \in \mathcal{M}ult(J,\mathfrak{S}(N))$, we define $j \bullet h \in \mathcal{M}ult(J,\mathfrak{S}(N))$ such that: 
\begin{align*}
\forall c \in J, (j \bullet h)( c) = j_{\overline{c}}^{-1} h(c) j_{\underline{c}}. 
\end{align*}
A measure $\mu$ on $\mathcal{M}ult\left(J,\mathfrak{S}(N)\right)$ is gauge-invariant if for any measurable function $f$ from $(\mathcal{M}ult(J,\mathfrak{S}(N)), \mathcal{B})$ to $\mathbb{R}$, for any function $j: V \to \mathfrak{S}(N)$: 
\begin{align*}
\int_{\mathcal{M}ult(P,G)} f ( j \bullet h ) d\mu(h) = \int_{\mathcal{M}ult(P,G)} f ( h ) d\mu(h).
\end{align*}

In \cite{FGA}, the author and his co-authors proved a version of the following theorem which is a generalization of Theorem $3.3.1$ proved by T.L\'{e}vy in \cite{Levy1}. The original formulation by T. L\'{e}vy of this theorem is the first part of Theorem \ref{main}. Let us denote by $dx$ the Lebesgue measure on $\mathbb{R}^{2}$. 

\begin{theorem}
\label{main}
Let $\left(\Gamma_N,d_N\right)_{N \in \mathbb{N}}$ be a sequence of complete metric groups such that for any $N\in \mathbb{N}$, translations and inversion are isometries on $\Gamma_N$. For any integer $N$, let $H_N \in \mathcal{M}ult( {\sf Aff}, \Gamma_N)$ be a multiplicative function. Assume that there exists $K_N \geq 0$ such that for any $N \in \mathbb{N}$, for all simple loop $l \in {\sf Aff}$ bounding a disk $D$, the inequality: 
\begin{align}
\label{hold}
d_N(1,H_N(l)) \leq K_N \sqrt{dx(D)}
\end{align}
 holds. Then for each integer $N$, the function $H_N$ admits a unique extension as an element of $\mathcal{M}ult(P, G)$, also denoted by $H_N$, which is continuous for the convergence with fixed endpoints. 

Let $(E,d)$ be a complete metric space. For any integer $N$, let $\psi_N: \Gamma_N\to E$ be a Lipchitz function of Lipchitz norm $|| \psi_{N} ||_{{\sf Lip}}$. Let us assume that the three following conditions hold: 
\begin{enumerate}
\item for any $l \in {\sf Aff}_0$, $\psi_N\left(H_N(l)\right)$ converges to a limit as $N$ goes to infinity, 
\item $\underset{N \in \mathbb{N}}{\sup} || \psi_N ||_{{\sf Lip}} < \infty$, 
\item $\underset{N \in \mathbb{N}}{\sup} K_N <\infty,$ 
\end{enumerate}
then for any $l \in L_0$, $\big(\psi_N(H_N(l))\big)_{N \in \mathbb{N}}$ converges to a limit $\phi(l)$. Besides, the function: 
\begin{align*}
\phi: L_0&\to E\\
l &\mapsto \phi(l)
\end{align*}
is continuous for the convergence with fixed endpoints. 
\end{theorem}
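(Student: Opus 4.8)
The plan is to adapt the proof of Theorem $3.3.1$ of \cite{Levy1}, the extra work being to keep every estimate uniform in $N$. The whole argument rests on two facts about the convergence with fixed endpoints: piecewise affine paths are dense in $P$ for this convergence, and the bound (\ref{hold}) makes each $H_N$ ``H\"older'' in the area, so that the holonomy of a simple affine loop enclosing a small region is close to $1$, with a modulus governed by $K_N$.

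For the first assertion --- existence and continuity of the extension --- I would fix $N$ and use only (\ref{hold}). Given $p\in P$, pick $(p_n)_n$ in ${\sf Aff}$ with $p_n\to p$ with fixed endpoints. For $n,m$ large the loop $p_np_m^{-1}$, based at $\underline p$, is a piecewise affine loop lying in a shrinking neighbourhood of the rectifiable arc $p$; writing it as a concatenation of simple piecewise affine sub-loops $\ell_1,\dots,\ell_r$ bounding pairwise disjoint disks $D_1,\dots,D_r$, multiplicativity of $H_N$, the fact that inversion and translations are isometries of $(\Gamma_N,d_N)$, the triangle inequality and (\ref{hold}) give
\[
d_N\big(H_N(p_n),H_N(p_m)\big)=d_N\big(1,H_N(p_np_m^{-1})\big)\le \sum_{i=1}^{r}d_N\big(1,H_N(\ell_i)\big)\le K_N\sum_{i=1}^{r}\sqrt{dx(D_i)}.
\]
The geometric lemma of \cite{Levy1}, which exploits both the uniform and the length convergence contained in the definition of convergence with fixed endpoints, guarantees that the sum of the $\sqrt{dx(D_i)}$ tends to $0$ as $n,m\to\infty$; hence $(H_N(p_n))_n$ is Cauchy in the complete space $\Gamma_N$ and converges. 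The same estimate shows the limit does not depend on the approximating sequence, and multiplicativity and the behaviour under inversion pass to the limit because concatenation and inversion are continuous for the convergence with fixed endpoints. Re-running the bound with the now-extended $H_N$ gives its continuity for that convergence.

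For the second assertion I would combine this with hypotheses $(1)$--$(3)$. If $l\in{\sf Aff}_0$, then $\psi_N(H_N(l))$ converges by $(1)$. For a general $l\in L_0$, choose $l_n\to l$ in ${\sf Aff}_0$ with fixed endpoints; decomposing $l_nl^{-1}$ into simple sub-loops bounding disjoint disks $D_1,\dots,D_r$ and writing $c_n=\sum_{i=1}^{r}\sqrt{dx(D_i)}$ (a quantity that depends only on $l$ and the chosen approximation, with $c_n\to0$), the Lipschitz bound and the estimate above yield
\[
d\big(\psi_N(H_N(l)),\psi_N(H_N(l_n))\big)\le \|\psi_N\|_{\sf Lip}\,d_N\big(1,H_N(l_nl^{-1})\big)\le \Big(\sup_{M}\|\psi_M\|_{\sf Lip}\Big)\Big(\sup_{M}K_M\Big)\,c_n,
\]
the prefactor being finite by $(2)$ and $(3)$. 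Thus $\psi_N(H_N(l))$ is within a prescribed $\varepsilon$ of $\psi_N(H_N(l_n))$ uniformly in $N$ once $n$ is large, while $\psi_N(H_N(l_n))$ converges as $N\to\infty$ for each fixed $n$; a $3\varepsilon$ argument then shows $(\psi_N(H_N(l)))_N$ is Cauchy in the complete space $E$, hence converges to a limit $\phi(l)$. Continuity of $\phi$ for the convergence with fixed endpoints follows the same way: if $l^{(k)}\to l$ with fixed endpoints, then $d(\phi(l^{(k)}),\phi(l))=\lim_N d(\psi_N(H_N(l^{(k)})),\psi_N(H_N(l)))$ is bounded by $(\sup_M\|\psi_M\|_{\sf Lip})(\sup_M K_M)\,c'_k$ with $c'_k\to0$.

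The delicate point --- and the part I would import essentially verbatim from \cite{Levy1} --- is the geometric ``filling'' lemma used above: showing that the error loop between two uniformly close piecewise affine paths with the same endpoints can be cut into simple sub-loops whose bounding disks have $\sqrt{\mathrm{area}}$-sum tending to $0$. The tension is that refining the approximation shrinks the area of the strip between the two paths but tends to increase the number of sub-loops, and one must check that the total $\sqrt{\mathrm{area}}$-sum still vanishes; this is where the length condition in the definition of convergence with fixed endpoints is essential. Everything else is a soft combination of completeness, density of piecewise affine loops, and the uniform bounds $(2)$--$(3)$.
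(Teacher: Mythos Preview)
The paper does not give its own proof of this theorem: it is quoted with attribution, the first part to Theorem~3.3.1 of \cite{Levy1} and the full statement to \cite{FGA}, and no argument appears in the text. Your sketch is precisely the intended strategy in those references --- L\'evy's extension via the geometric ``filling'' lemma for the first part, and the uniform-in-$N$ upgrade by a $3\varepsilon$ argument for the second --- so there is nothing to compare against beyond saying that your outline matches the cited proofs.

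One small technical slip worth flagging: in the second assertion you write ``decomposing $l_nl^{-1}$ into simple sub-loops bounding disjoint disks'', but for a general rectifiable $l\in L_0$ the loop $l_nl^{-1}$ is not piecewise affine, so the filling lemma does not apply to it directly. The clean way around this is to phrase the output of the first part as a \emph{uniform modulus of continuity}: the filling estimate yields, for $p,p'\in{\sf Aff}$ with common endpoints, $d_N\big(H_N(p),H_N(p')\big)\le K_N\,\omega(p,p')$ with $\omega$ a purely geometric quantity that tends to $0$ along convergence with fixed endpoints; since $H_N$ on $P$ is defined as a limit of its values on ${\sf Aff}$, this inequality persists for arbitrary $p,p'\in P$. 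With that formulation your $3\varepsilon$ argument runs with $c_n=\omega(l_n,l)$, which is independent of $N$, and the continuity of $\phi$ follows by the same bound. This is only a rewording of what you wrote, not a genuine gap.
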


Recall that $\mathcal{T}_N$ is the set of transpositions in $\mathfrak{S}(N)$. Let $\left(S_{t}^{N}\right)_{ t \geq 0}$ be a $\mathcal{T}_N$ random walk on $\mathfrak{S}(N)$.  Let us explain how the first part of Theorem \ref{main} allows us to construct the Yang-Mills field associated with  $\left(S_{t}^{N}\right)_{ t \geq 0}$.  In order to do so, we need the notion of finite planar graph: it will be the usual notion, except that we ask that the bounded faces are homeomorphic to an open disk. Let $\mathbb{G}$ be a finite planar graph: the set of bounded of faces of $\mathbb{G}$ is denoted by $\mathbb{F}$. For any finite planar graph $\mathbb{G}$, we define $P(\mathbb{G})$ as the set of paths that one can draw by concatenating edges of $\mathbb{G}$. Let us define also $\mathcal{G}({\sf Aff})$ the set of finite planar graphs $\mathbb{G}$ whose edges are piecewise affine.

In order to construct a measure on $\left(\mathcal{M}ult(P,\mathfrak{S}(N)),\mathcal{B}\right)$, first we construct for any $\mathbb{G} \in \mathcal{G}({\sf Aff})$ an associated measure $\mu_{\mathbb{G}}$ on $\big(\mathcal{M}ult(P(\mathbb{G}),\mathfrak{S}(N)),\mathcal{B}\big)$. We will give the construction given by the author in \cite{Gabholo}, but one can have a look at \cite{Levy1} where a different formulation is given. 

We need to introduce the loop paradigm for two dimensional Yang-Mills fields. Let us consider a finite planar graph $\mathbb{G}$ in $\mathcal{G}({\sf Aff})$, let us consider $v_0$ a vertex of $\mathbb{G}$ and $T$ a covering tree of $\mathbb{G}$. Let us consider for any bounded face $F$ of $\mathbb{G}$ a loop $c_F \in P(\mathbb{G})$ which represents the anti-clockwise-oriented boundary $\partial F$. For any vertex $v$ of $\mathbb{G}$, we denote by $[v_0,v]_{T}$ the unique injective path in $T$ which goes from $v_0$ to $v$. Let $L_{v_0}(\mathbb{G})$ be the set of loops $l$ in $P(\mathbb{G})$ such that $ \underline{l} = v_0$. We define the facial lasso ${\sf l}_{ F} \in L_{v_0}(\mathbb{G})$ by: 
$${\sf l}_{F} = [v_0,v]_{T}\ \!c_{F} \ \![v_0,v]_{T} ^{-1}.$$ 
It was proved in Proposition $5.12$ of \cite{Gabholo} that the application: 
\begin{align*}
\Phi_{T,(c_{\mathbb{F}})_{F \in \mathbb{F}}}: \mathcal{M}ult\left( L_{v_0}(\mathbb{G}) , \mathfrak{S}(N) \right)& \to  \left(\mathfrak{S}(N)\right)^{\mathbb{F}} \\
h &\mapsto \left( h \left({\sf l}_{ F} \right)\right)_{F \in \mathbb{F}}, 
\end{align*}
is a bijection and for any loop $l \in L_{v_0}(\mathbb{G})$, there exists a word $w$ in the letters $({\sf l}_F)_{F \in \mathbb{F}}$ and $\left({\sf l}_F^{-1}\right)_{F \in \mathbb{F}}$ such that $h_{l} = w\left( \left(h_{{\sf l}_F}\right)_{F \in \mathbb{F}},  \left(h_{{\sf l}_F^{-1}}\right)_{F \in \mathbb{F}} \right).$

Using Proposition $7.2$ proved by the author in \cite{Gabholo}, we have the following proposition.

\begin{proposition}
\label{creation}
There exists a unique gauge-invariant measure $\mu_{\mathbb{G}}^{v_0,T,(c_F)_{F \in \mathbb{F}}}$ on $\mathcal{M}ult\left( P(\mathbb{G}), \mathfrak{S}(N)\right)$ such that under this measure: 
\begin{enumerate}
\item the random variables $\left( h\left({\sf l}_{ F}\right)\right)_{F \in \mathbb{F}}$ are independent, 
\item for any $F \in \mathbb{F}$, $h\left({\sf l}_{ F}\right)$ has the same law as $S_{dx(F)}^{N}$. 
\end{enumerate}
This measure does not depend neither on the choice of $v_0$ nor $T$ nor on the choice of $\left(c_{F}\right)_{F \in \mathbb{F}}$: we denote it $\mu_{\mathbb{G}}$. 
\end{proposition}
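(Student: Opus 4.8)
The plan is to reduce the statement almost entirely to the bijection $\Phi_{T,(c_F)_{F\in\mathbb{F}}}$ recalled just above, plus the gauge-invariance bookkeeping. First I would fix a vertex $v_0$, a spanning tree $T$ and a choice of boundary loops $(c_F)_{F\in\mathbb{F}}$, and use Proposition~5.12 of \cite{Gabholo}: since $\Phi_{T,(c_F)_{F\in\mathbb{F}}}$ identifies $\mathcal{M}ult(L_{v_0}(\mathbb{G}),\mathfrak{S}(N))$ with $\mathfrak{S}(N)^{\mathbb{F}}$, I can simply \emph{define} a probability measure on $\mathcal{M}ult(L_{v_0}(\mathbb{G}),\mathfrak{S}(N))$ by pushing forward, through $\Phi_{T,(c_F)_{F\in\mathbb{F}}}^{-1}$, the product measure $\bigotimes_{F\in\mathbb{F}}\mathrm{law}(S^N_{dx(F)})$. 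By construction the random variables $h({\sf l}_F)$ are independent with the prescribed laws. The second step is to extend this measure from loops based at $v_0$ to all of $P(\mathbb{G})$: any path $p\in P(\mathbb{G})$ can be written using the tree as $p=[v_0,\underline p]_T^{-1}\,\tilde p\,[v_0,\overline p]_T$ where $\tilde p\in L_{v_0}(\mathbb{G})$, so defining $h(p)$ by the corresponding word in the $h({\sf l}_F)$ (after declaring $h$ to be trivial on the edges of $T$, say) produces a multiplicative function on $P(\mathbb{G})$; one checks multiplicativity is preserved. This is essentially Proposition~7.2 of \cite{Gabholo} applied with the $\mathcal{T}_N$-random walk as the jump kernel, so I would invoke it rather than redo it.

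For gauge-invariance, I would argue as in \cite{Gabholo}: the group $\mathfrak{S}(N)^V$ acts on $\mathcal{M}ult(P(\mathbb{G}),\mathfrak{S}(N))$, and one verifies directly that the facial lassos' holonomies $h({\sf l}_F)$ transform by conjugation $h({\sf l}_F)\mapsto j_{v_0}^{-1} h({\sf l}_F) j_{v_0}$ under $j\in\mathfrak{S}(N)^V$ (only the base vertex matters for a lasso based at $v_0$). Since each law $\mathrm{law}(S^N_{dx(F)})$ is conjugation-invariant by Lemma~\ref{invariance}, the product measure — hence its pushforward — is invariant under this action, which is precisely gauge-invariance. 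For uniqueness: any gauge-invariant measure satisfying (1) and (2) is, after transporting by $\Phi_{T,(c_F)_{F\in\mathbb{F}}}$, a measure on $\mathfrak{S}(N)^{\mathbb{F}}$ making the coordinates independent with laws $\mathrm{law}(S^N_{dx(F)})$; such a measure is unique, and since $\Phi_{T,(c_F)_{F\in\mathbb{F}}}$ is a bijection and the value of $h$ on any path is a fixed word in the $h({\sf l}_F)$, the measure on $\mathcal{M}ult(P(\mathbb{G}),\mathfrak{S}(N))$ is determined.

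Finally, for independence of $(v_0,T,(c_F)_{F\in\mathbb{F}})$: the standard argument is that changing any of these data conjugates the family $(h({\sf l}_F))_{F\in\mathbb{F}}$ by fixed (deterministic-in-the-construction, but absorbed by gauge) elements and permutes/recombines the lassos in a way that, because each $S^N_{dx(F)}$ is conjugation-invariant and because the boundary of the unbounded face gives the relation $\prod_F h({\sf l}_F)=1$ up to conjugation, leaves the law unchanged; this is exactly the content already established in \cite{Gabholo} for a general conjugation-invariant increment law, so I would cite it. The only point requiring genuine care — and the main obstacle — is checking that the transition from ``measure on loops based at $v_0$'' to ``measure on all of $P(\mathbb{G})$'' is well defined and gauge-invariant simultaneously, i.e.\ that the choice made on the tree edges does not secretly break gauge-invariance; this is handled by observing that two such choices differ by a gauge transformation supported on $V$, so they induce the same measure once one quotients appropriately, matching the formulation in Proposition~7.2 of \cite{Gabholo}.
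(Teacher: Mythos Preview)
Your proposal is correct and matches the paper's own approach: the paper does not give a standalone proof of this proposition but simply invokes Proposition~7.2 of \cite{Gabholo}, exactly as you do, and your sketch fleshes out the content of that cited result (push forward the product of the laws of $S^N_{dx(F)}$ through the bijection $\Phi_{T,(c_F)_{F\in\mathbb{F}}}$, extend to $P(\mathbb{G})$ via the tree, use conjugation-invariance of the walk for gauge-invariance, and conclude uniqueness and independence of choices from the bijection).

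One small correction: in your last paragraph you invoke a relation ``$\prod_F h({\sf l}_F)=1$ up to conjugation'' coming from the unbounded face. On the plane there is no such relation --- the reduced group of loops based at $v_0$ in $\mathbb{G}$ is free on the facial lassos (this is precisely why $\Phi_{T,(c_F)_{F\in\mathbb{F}}}$ is a bijection onto the full product $\mathfrak{S}(N)^{\mathbb{F}}$), so the independence-of-choices argument does not need, and should not use, that identity. This does not affect your overall strategy, since you correctly defer the details to \cite{Gabholo}.
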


Let $\mathbb{G}$ and $\mathbb{G}'$ be two finite planar graphs in $\mathcal{G}({\sf Aff})$ such that $\mathbb{G}'$ is coarser than $\mathbb{G}$: this means that $P(\mathbb{G}') \subset P(\mathbb{G})$. Any function in $\mathcal{M}ult(P(\mathbb{G}),\mathfrak{S}(N))$ allows us to define, by restriction, an element of $\mathcal{M}ult(P(\mathbb{G}'),\mathfrak{S}(N))$. The measures $\left( \mu_{\mathbb{G}} \right)_{\mathbb{G}}$ are compatible with the applications of restriction: the family of measures $\big(\mathcal{M}ult(P(\mathbb{G}),\mathfrak{S}(N)), \mu_{\mathbb{G}} \big)_{\mathbb{G} \in \mathcal{G}({\sf Aff})}$ is a projective system and, as explained in Proposition $1.22$ of \cite{Gabholo} and in \cite{Levy1}, we can take the projective limit.

\begin{definition}
The affine Yang-Mills measure associated to $\left(S_{t}^{N}\right)_{t \geq 0}$, denoted by $YM_{\sf Aff}^{\mathfrak{S}(N)}$, is the projective limit of: 
\begin{align*}
\big(\mathcal{M}ult(P(\mathbb{G}),\mathfrak{S}(N)), \mu_{\mathbb{G}} \big)_{\mathbb{G} \in \mathcal{G}({\sf Aff})}.
\end{align*}
It is a gauge-invariant measure on $\mathcal{M}ult({\sf Aff}, \mathfrak{S}(N))$. 
\end{definition}

Let us consider a simple loop $l$ in ${\sf Aff}$ and let $\mathbb{G}^{l}$ be the finite planar graph in $\mathcal{G}({\sf Aff})$ which has $l$ as unique edge. In this case, $\mathcal{M}ult(P(\mathbb{G}^{l}), \mathfrak{S}(N))\simeq \mathfrak{S}(N)$ and for any continuous function $f: \mathfrak{S}(N) \to \mathbb{R}$: 
\begin{align}
\label{egalit}
YM_{\sf Aff}^{\mathfrak{S}(N)}\left[f(h_l)\right] = \mathbb{E}\left[f\left(S^{N}_{dx({\sf Int}(l))}\right)\right], 
\end{align}
where ${\sf Int}(l)$ is the bounded component of $\mathbb{R}^{2}\setminus l$. 
This last equality shows that under $YM_{\sf Aff}^{\mathfrak{S}(N)}$, $h_{l}$ has the same law as $S^{N}_{dx({\sf Int}(l))}$. This will allow us to use the first part of Theorem \ref{main} in order to construct the Yang-Mills measure, as it was done by T. L\'{e}vy in \cite{Levy1} and then by the author in \cite{Gabholo}. We need some estimates on the walk $\left(S^{N}_{t}\right)_{t \geq 0}$: in order to do so, let us define a distance on $\mathfrak{S}(N)$ by considering any element of $\mathfrak{S}(N)$ as a permutation matrix of size $N$.

\begin{definition}
\label{distance}
For any $\sigma, \sigma' \in \mathfrak{S}(N)$: 
\begin{align*}
d_{N}(\sigma,\sigma') = \left[2\left(1-\frac{1}{N}Tr\!\left(\sigma \sigma'^{-1}\right)\right)\right]^{\frac{1}{2}}. 
\end{align*}
\end{definition}
Since the permutation matrices are orthogonal, for any $\sigma$ and $\sigma'$ in $\mathfrak{S}(N)$: 
\begin{align*}
d_{N} \left(\sigma,\sigma'\right) = \left[\frac{1}{N}Tr\left(\left(\sigma-\sigma'\right)\ ^{t}\!\left(\sigma-\sigma'\right)\right)\right]^{\frac{1}{2}}. 
\end{align*}
This shows that $d_N$ is a distance on $\mathfrak{S}(N)$. Let us control the distance of $\left(S^{N}_t\right)_{t \geq 0}$ to the identity.

\begin{lemma}
\label{estime}
For any real $t\geq 0$, $\mathbb{E}\left[d_{N}\left(id_N,S_{t}^{N}\right)\right] \leq  \sqrt{2t}. $
\end{lemma}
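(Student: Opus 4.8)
The plan is to bound $\mathbb{E}[d_N(id_N,S_t^N)]$ by first bounding $\mathbb{E}[d_N(id_N,S_t^N)^2]$ and then applying the Cauchy--Schwarz (Jensen) inequality $\mathbb{E}[d_N(id_N,S_t^N)] \le \sqrt{\mathbb{E}[d_N(id_N,S_t^N)^2]}$. By Definition \ref{distance}, $d_N(id_N,S_t^N)^2 = 2\bigl(1 - \tfrac1N \mathrm{Tr}(S_t^N)\bigr)$, so it suffices to show $\mathbb{E}\bigl[1 - \tfrac1N \mathrm{Tr}(S_t^N)\bigr] \le t$, i.e. that $f_N(t) := \mathbb{E}[\tfrac1N \mathrm{Tr}(S_t^N)]$ satisfies $f_N(t) \ge 1 - t$. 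Note $\tfrac1N\mathrm{Tr}(S_t^N) = m_{id_1}(S_t^N)$ in the notation of Section \ref{sec:remind}, so $f_N(t) = \mathbb{E}m_{id_1}(S_t^N)$, and $f_N(0) = 1$.

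The key step is a differential (in)equality for $f_N$. First I would use that $\left(S_t^N\right)_{t\ge 0}$ is a Markov process with generator $H_N$ given in Definition \ref{simplewalk} to write, for the function $g(\sigma) = \tfrac1N\mathrm{Tr}(\sigma)$ viewed as an element of $\mathbb{R}^{\mathfrak{S}(N)}$,
\begin{align*}
f_N'(t) = \mathbb{E}\bigl[(H_N g)(S_t^N)\bigr] = \frac{N}{\lambda_N(1^c)}\frac{1}{\#\lambda_N}\sum_{\sigma \in \lambda_N} \mathbb{E}\Bigl[\tfrac1N\mathrm{Tr}(\sigma S_t^N) - \tfrac1N\mathrm{Tr}(S_t^N)\Bigr].
\end{align*}
Since $\mathcal{T}_N$-random walk is the relevant case (here $\lambda_N = \mathcal{T}_N$, so $\lambda_N(1^c) = 2$ and $\tfrac{N}{\lambda_N(1^c)} = \tfrac N2$), one uses the elementary trace identity: for a transposition $\tau = (i\,j)$ and any permutation matrix $S$, $\mathrm{Tr}(\tau S) = \mathrm{Tr}(S) - \delta_{S(j)=i} - \delta_{S(i)=j} + \delta_{S(i)=i}\delta_{S(j)=j}\cdot(\text{correction})$; more cleanly, $\mathrm{Tr}(\tau S) - \mathrm{Tr}(S) = -(\text{number of }\ell\in\{i,j\}\text{ with }S(\ell)\in\{i,j\})$ which lies in $\{-2,-1,0\}$ (it equals $0$ when $S$ fixes both $i,j$, and is negative otherwise). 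Averaging over the $\binom N2$ transpositions and using that each contributes at most $0$ and at least a bounded negative amount, one gets $f_N'(t) = -\mathbb{E}[\phi(S_t^N)]$ for a nonnegative functional $\phi$ with $\phi \le $ something controllable; the cleanest bound is $f_N'(t) \ge -1$ for all $t$, which follows because $\tfrac N2 \cdot \tfrac1{\binom N2}\sum_{i<j}(\text{term in }[-2,0]) \ge \tfrac N2 \cdot \tfrac1{\binom N2}\cdot\bigl(-\tfrac1N\bigr)\cdot(\text{count})$... The honest route: show directly that $\tfrac{N}{\lambda_N(1^c)}\tfrac1{\#\lambda_N}\sum_\sigma\bigl(\tfrac1N\mathrm{Tr}(\sigma S)-\tfrac1N\mathrm{Tr}(S)\bigr) \ge -1$ pointwise in $S$ — this is a finite combinatorial check using that the total decrease of $\mathrm{Tr}$ summed over all $\sigma\in\lambda_N$ is at most $N\cdot\lambda_N(1^c)$ in absolute value. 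Then $f_N'(t) \ge -1$, so $f_N(t) \ge f_N(0) - t = 1 - t$, giving $\mathbb{E}[d_N(id_N,S_t^N)^2] = 2(1-f_N(t)) \le 2t$, and Jensen finishes: $\mathbb{E}[d_N(id_N,S_t^N)] \le \sqrt{2t}$.

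The main obstacle is pinning down the pointwise bound $(H_N g)(\sigma_0) \ge -1$ for the generator applied to the normalized trace; this is a purely combinatorial estimate about how much $\mathrm{Tr}(\sigma\sigma_0)$ can drop, on average over $\sigma \in \lambda_N$, relative to $\mathrm{Tr}(\sigma_0)$, and the normalization factor $\tfrac{N}{\lambda_N(1^c)}$ is exactly tuned so this works out to $-1$. For the transposition case this is clean (each transposition $(i\,j)$ changes the permutation's cycle structure by merging or splitting, changing the number of fixed points by a bounded amount), and the general $\lambda_N$ case works the same way with the normalization absorbing the size of the support. Once that bound is in hand the rest is Grönwall-type bookkeeping plus Jensen, which is routine.
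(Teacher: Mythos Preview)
Your overall strategy coincides with the paper's: reduce via Jensen to bounding $\mathbb{E}[d_N(id_N,S_t^N)^2] = 2(1 - f_N(t))$ with $f_N(t) = \mathbb{E}[\tfrac{1}{N}\mathrm{Tr}(S_t^N)]$, and control $f_N$ through the generator. The paper simply computes the action of the generator on the normalized trace \emph{exactly}: one checks the matrix identity $\tfrac{1}{N}\sum_{\tau\in\mathcal{T}_N}(\tau - id_N) = \tfrac{1}{N}J - Id_N$ (with $J$ the all-ones matrix), which yields a linear ODE for $f_N$ with solution $f_N(t) = \tfrac{1}{N} + (1-\tfrac{1}{N})e^{-t}$, hence $\mathbb{E}[d_N^2] = 2(1-\tfrac{1}{N})(1-e^{-t}) \le 2t$.

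Your combinatorial sketch contains a genuine error. The claim that $\mathrm{Tr}(\tau S) - \mathrm{Tr}(S) \in \{-2,-1,0\}$ for a transposition $\tau=(i\ j)$ is false: if $S(i)=j$ and $S(j)=i$ then $\tau S$ fixes both $i$ and $j$ while $S$ fixes neither, so the difference is $+2$. The correct identity is
\[
\mathrm{Tr}(\tau S) - \mathrm{Tr}(S) = (\delta_{S(i)=j} - \delta_{S(i)=i}) + (\delta_{S(j)=i} - \delta_{S(j)=j}),
\]
and summing over all transpositions gives $\sum_\tau[\mathrm{Tr}(\tau S) - \mathrm{Tr}(S)] = N - N\,\mathrm{Tr}(S)$. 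Plugging into the generator yields $(H_N g)(S) = \tfrac{1}{N-1}(1 - \mathrm{Tr}(S))$, which is indeed $\ge -1$ since $\mathrm{Tr}(S)\le N$. So your pointwise bound is true, but establishing it requires exactly the computation the paper performs; there is no softer shortcut, and the one you attempted was incorrect. Once this exact formula is in hand, the paper's route of solving the ODE is no extra work and is more informative than the bare inequality $f_N' \ge -1$.
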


\begin{proof}
Let $t$ be a non-negative real. By definition: 
\begin{align*}
\mathbb{E}\left[d_{N}\left(id_N,S_{t}^{N}\right)^{2}\right] = 2\left[ 1-\mathbb{E}\left[\frac{1}{N}Tr\left(S_{t}^{N}\right)\right]\right]. 
\end{align*} 
A simple calculation allows us to see that $\frac{1}{N}\sum_{\tau \in \mathcal{T}_N} (\tau-id_N)$, seen as a matrix of size $N$, is equal to $\rho_N \left[\frac{1}{N}{\sf e}_1 -id\right],$ where we recall that $\rho_{N}$ is the representation defined in Section \ref{sec:remind} and ${\sf e}_1$ is the partition $\{\{1\} ,\{1'\}\}$. This implies that for any $t_0\geq 0$:  
\begin{align*}
\frac{d}{dt}_{| t=t_0}\mathbb{E}\left[S_{t}^{N}\right] =\rho_N\left(\frac{1}{N}{\sf e}_1 - id_1\right)\mathbb{E}\left[S_{t_0}^{N}\right]. 
\end{align*}
Thus, by linearity: 
\begin{align*}
\frac{d}{dt}_{| t=t_0} \mathbb{E}\left[ \frac{1}{N} Tr\left(S_{t}^{N}\right)\right] = \frac{1}{N} \mathbb{E}\left[ \frac{1}{N}Tr\left(\rho_N({\sf e}_1) S_{t_0}^{N}\right)\right] -  \mathbb{E}\left[ \frac{1}{N}Tr\left( S_{t_0}^{N}\right)\right], 
\end{align*}
and, using the fact that $\frac{1}{N}Tr\left( \rho_N({\sf e}_1) \sigma \right) = 1$ for any $\sigma \in \mathfrak{S}(N)$, we get the differential equation:
\begin{align*}
\frac{d}{dt}_{| t=t_0} \mathbb{E}\left[ \frac{1}{N} Tr\left(S_{t_0}^{N}\right)\right] &= \frac{1}{N}  -  \mathbb{E}\left[ \frac{1}{N}Tr\left( S_{t_0}^{N}\right)\right], \\
\mathbb{E}\left[ \frac{1}{N} Tr\left(S_{0}^{N}\right)\right] &= 1. 
\end{align*}
The solution is given by the function $t \mapsto \frac{1}{N} + \left(1-\frac{1}{N}\right)e^{-t}$: for any real $t \geq 0$: 
\begin{align}
\label{pointfixe}
\mathbb{E}\left[ \frac{1}{N} Tr\left(S_{t}^{N}\right)\right] = \frac{1}{N}+\left(1-\frac{1}{N}\right) e^{-t}, 
\end{align}
and thus:  
\begin{align*}
\mathbb{E}\left[d_{N}\left(id_N, S_{t}^{N}\right)^{2}\right] = 2\left[1- \frac{1}{N}\right] [1-e^{-t}]. 
\end{align*}
This implies that for any $t\geq 0$ and any positive integer $N$, 
$$\left(\mathbb{E}\left[d_N\left(id_N, S^{N}_t\right)\right]\right)^{2} \leq \mathbb{E}\left[d_{N}\left(id_N, S^{N}_{t}\right)^{2}\right]\leq 2t.$$
This allows us to finish the proof. 
\qed \end{proof}

Using Lemma \ref{estime}, we can prove the following theorem. 
\begin{theorem}
\label{yangmills}
The measure $YM^{\mathfrak{S}(N)}_{\sf Aff}$ can be extended by continuity to a measure on $\mathcal{M}ult(P,\mathfrak{S}(N))$. This means that there exists a measure  $YM^{\mathfrak{S}(N)}$ on $\mathcal{M}ult(P, \mathfrak{S}(N))$ such that: 
\begin{enumerate}
\item the restriction of $YM^{\mathfrak{S}(N)}$ on $\mathcal{M}ult(\sf Aff, \mathfrak{S}(N))$ is equal to  $YM^{\mathfrak{S}(N)}_{\sf Aff}$,
\item for any sequence of paths $(p_n)_{n \in \mathbb{N}}$ and any path $p\in P$ such that $(p_n)_{n \in \mathbb{N}}$ converges with fixed endpoints to $p$, we have: 
\begin{align*}
YM^{\mathfrak{S}(N)}\left[d_{N}\left(h_{p_n}, h_p\right)\right] \underset{n \to \infty}{\longrightarrow}  0. 
\end{align*} 
\end{enumerate}
\end{theorem}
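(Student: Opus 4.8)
The plan is to obtain $YM^{\mathfrak{S}(N)}$ by running the extension procedure of T.~L\'{e}vy from \cite{Levy1}: the first part of Theorem~\ref{main} is the deterministic skeleton of that procedure, and the probabilistic input it requires is a single area estimate on the holonomies of small simple loops, which here is furnished by Lemma~\ref{estime}. (The literal hypothesis (\ref{hold}) cannot hold $YM^{\mathfrak{S}(N)}_{\sf Aff}$-almost surely, since $\mathfrak{S}(N)$ is discrete and $d_N(1,h_l)$ does not tend to $0$ with the enclosed area for a fixed realization; it is its expectation form that is used.) So the work is to check that $YM^{\mathfrak{S}(N)}_{\sf Aff}$, with $\Gamma_N=\mathfrak{S}(N)$ endowed with the metric $d_N$ of Definition~\ref{distance}, meets the hypotheses. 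That $(\mathfrak{S}(N),d_N)$ is a complete metric group on which translations and inversion are isometries is immediate from the trace formula for $d_N$ (permutation matrices are orthogonal and $Tr$ is cyclic and conjugation-invariant), and gauge invariance of $YM^{\mathfrak{S}(N)}_{\sf Aff}$ is built into its construction through Proposition~\ref{creation}.

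The area estimate is exactly where Lemma~\ref{estime} enters. If $l\in{\sf Aff}$ is a simple loop bounding a disk $D$, then by (\ref{egalit}) the variable $h_l$ has, under $YM^{\mathfrak{S}(N)}_{\sf Aff}$, the law of $S^{N}_{dx(D)}$, so Lemma~\ref{estime} gives
\begin{align*}
YM^{\mathfrak{S}(N)}_{\sf Aff}\left[d_N(1,h_l)\right]=\mathbb{E}\left[d_N\left(id_N,S^{N}_{dx(D)}\right)\right]\le\sqrt{2\,dx(D)},
\end{align*}
which is the expectation form of (\ref{hold}) with the constant $K_N=\sqrt{2}$, uniform in $N$; the second-moment computation inside the proof of Lemma~\ref{estime} even yields the sharper bound $YM^{\mathfrak{S}(N)}_{\sf Aff}[d_N(1,h_l)^2]=2(1-\frac1N)(1-e^{-dx(D)})\le 2\,dx(D)$, should an $L^2$-version be preferred. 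Feeding this into the extension procedure produces a measure $YM^{\mathfrak{S}(N)}$ enjoying properties~(1) and~(2).

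Concretely the extension runs as follows. For $p\in P$ and any $(p_n)\subset{\sf Aff}$ converging to $p$ with fixed endpoints one shows that $(h_{p_n})_n$ is Cauchy in $L^1(YM^{\mathfrak{S}(N)}_{\sf Aff};d_N)$: for large $n,m$ one embeds $p_n$ and $p_m$ in a common affine graph, writes $h_{p_m}^{-1}h_{p_n}$ — and $d_N(h_{p_n},h_{p_m})=d_N(1,h_{p_m}^{-1}h_{p_n})$ by cyclicity of the trace — as a product of facial holonomies supported in the thin region swept between $p_n$ and $p_m$, whose total area tends to $0$, and then invokes the area estimate together with gauge invariance to move basepoints. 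Since $\mathfrak{S}(N)$ is finite, an $L^1$-Cauchy sequence of $\mathfrak{S}(N)$-valued variables converges in probability; carrying this out simultaneously for finite families of paths and checking Kolmogorov consistency yields the finite-dimensional laws of $YM^{\mathfrak{S}(N)}$, while the multiplicativity relations pass to the limit in probability, so the resulting measure is carried by $\mathcal{M}ult(P,\mathfrak{S}(N))$. Property~(1) holds because $h_{p_n}$ is unchanged when $p=p_n\in{\sf Aff}$, and property~(2) follows from the $L^1$-continuity just obtained, extended from affine approximating sequences to arbitrary ones by a three-$\varepsilon$ argument using the uniform continuity of $p\mapsto h_p$ on balls.

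The routine parts are the metric-group axioms for $(\mathfrak{S}(N),d_N)$, the gauge invariance, and the passage of the multiplicativity relations to the limit. The genuine difficulty sits inside the extension procedure of \cite{Levy1}, in the graph-combinatorial step that realizes the region between two nearby affine paths as a union of faces of controllably small total area so that the area estimate can be applied; but that step is entirely group-independent, and the only $N$-dependent ingredient is Lemma~\ref{estime}, so once it is in hand the proof reduces to a direct invocation of L\'{e}vy's construction.
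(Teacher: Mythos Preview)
Your argument is correct in substance and identifies the right ingredients (the metric-group structure of $(\mathfrak{S}(N),d_N)$, gauge invariance, and the area estimate from Lemma~\ref{estime}), but you take a more hands-on route than the paper. You set $\Gamma_N=\mathfrak{S}(N)$, observe that (\ref{hold}) can only hold in expectation, and then sketch the extension procedure directly: Cauchy in $L^1$, convergence in probability, Kolmogorov consistency, passage of multiplicativity to the limit. The paper instead absorbs the randomness into the group: it sets $\Gamma_N=L(\Omega,\mathcal{A},\mathbb{P};\mathfrak{S}(N))$, the group of $\mathfrak{S}(N)$-valued random variables under pointwise multiplication, endowed with $\overline{d_N}(X,Y)=\mathbb{E}[d_N(X,Y)]$. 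Then $H_N:{\sf Aff}\to\Gamma_N$, $l\mapsto h_l$, is a \emph{deterministic} multiplicative function, and Lemma~\ref{estime} makes (\ref{hold}) hold \emph{literally} for $\overline{d_N}$ with $K_N=\sqrt{2}$, so Theorem~\ref{main} applies as a black box and yields the continuous extension $H_N:P\to\Gamma_N$ in one stroke; the measure $YM^{\mathfrak{S}(N)}$ is then read off from the resulting process. Your approach is what this black box does internally, so nothing is wrong, but the paper's device of promoting $\Gamma_N$ to a space of random variables is exactly what lets one avoid reproving the graph-combinatorial step you flag as the genuine difficulty.
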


We only recall the proof given in \cite{Levy1}. 
\begin{proof}
Let $(\Omega,\mathcal{A}, \mathbb{P})$ be equal to $\left(\mathcal{M}ult({\sf Aff}, \mathfrak{S}(N)), \mathcal{B}, YM^{\mathfrak{S}(N)}_{\sf Aff}\right)$. For any path $p\in{\sf Aff}$, $h_p$ is a function on $\mathcal{M}ult({\sf Aff}, \mathfrak{S}(N))$ thus it can be seen as a $G$-valued random variable on $(\Omega, \mathcal{A}, \mathbb{P})$.
For any positive integer $N$, let $\Gamma_N = L(\Omega,\mathcal{A},\mathbb{P};\mathfrak{S}(N))$ be the set of $\mathfrak{S}(N)$-valued random variables defined on $\Omega$: this is a group for the pointwise multiplication of random variables. We endow $\Gamma_{N}$ with the distance: 
\begin{align*}
\overline{d_N}(X,Y) = \mathbb{E}\left[d_{N}(X,Y)\right]. 
\end{align*}
It is a distance which is invariant by translations and inversion. 
Let us consider the mapping: 
\begin{align*}
H_N: \sf Aff &\to \Gamma_N\\
l &\to h_l, 
\end{align*}
whih is a multiplicative function. Using Lemma \ref{estime} and Equality (\ref{egalit}),  we get that for any simple loop $l$: $\overline{d_N}(1,h_{l}) \leq\sqrt{2}\sqrt{dx({\sf Int}(l))}.$ We can apply Theorem \ref{main}: there exists an extension: 
\begin{align*}
H_N: P &\to \Gamma_N\\
p &\mapsto H(p)
\end{align*}
which is continuous for the convergence with fixed endpoints: for any sequence of paths $(p_n)_{n \in \mathbb{N}}$ and any path $p\in P$ such that $(p_n)_{n \in \mathbb{N}}$ converges with fixed endpoints to $p$, we get: 
\begin{align*}
\overline{d_N} \left(H_N\left(p_n\right), H_N\left(p \right)\right) = YM_{\sf Aff}^{\mathfrak{S}(N)}\big[d_{N}(H_N(p_n), H_N(p ))\big] \underset{n \to \infty}{\longrightarrow}  0.
\end{align*}
Thus we have constructed a $\mathfrak{S}(N)$-valued process $\left(H_N( p)\right)_{p \in P}$ on $\Omega$ which is stochastically continuous in law and such that for any $p$ and $p'$ in $P$ such that $\overline{p} = \underline{p'}$, almost surely $H_N(p p') =H_N(p') H_N( p)$, $H_N(p^{-1})=H_N( p)^{-1}$. Using Proposition $1.22$ in \cite{Gabholo} this allows us to construct a measure on $\mathcal{M}ul(P, \mathfrak{S}(N))$ called $YM^{\mathfrak{S}(N)}$, such that the process $(h_p)_{p \in P}$ has the same law under $YM^{\mathfrak{S}(N)}$ as the process $(H_N(p ))_{p \in P}$ under $YM_{\sf Aff}^{\mathfrak{S}(N)}$. The measure $YM^{\mathfrak{S}(N)}$ satisfies the desired properties. 
\qed \end{proof}

Now that we have defined the Yang-Mills measure $YM^{\mathfrak{S}(N)}$ for any positive integer $N$, we are interested in the convergence of the Wilson loops under these measures as $N$ goes to infinity.

\begin{definition}
Let $l_0$ be a loop based at $0$, the Wilson loop on $l_0$ is the function: 
\begin{align*}
W_{l_0}^{N}: \mathcal{M}ult(P , \mathfrak{S}(N)) &\to \mathbb{R}\\
 (h_p)_{p\in P} &\mapsto \frac{1}{N} Tr(h_{l_0}).
\end{align*}
seen as a random variable on $(\mathcal{M}ult(P , \mathfrak{S}(N)) , \mathcal{B}, YM^{\mathfrak{S}(N)})$.
\end{definition}

The main result about the limit of Yang-Mills measure on the symmetric group is given by the following result. 
\begin{theorem}
\label{masterfield}
For any loop $l$ based at $0$ the Wilson loop on $l$ converges in probability to a constant denoted by $\phi(l)$ as $N$ goes to infinity. The function: 
\begin{align*}
\phi: L_0 &\to \mathbb{R} \\
l &\mapsto \phi_l, 
\end{align*}
is continuous for the convergence with fixed endpoints. 

The asymptotic factorization property holds: for any positive integer $k$, any $k$-tuple of loops $l_1$, ..., $l_k$ in $L_0$: 
\begin{align*}
YM^{\mathfrak{S}(N)}\left[W_{l_1}^{N}...W_{l_k}^{N}\right] \underset{N \to \infty}{\longrightarrow} \phi(l_1)...\phi(l_k). 
\end{align*}
\end{theorem}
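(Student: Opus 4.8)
The plan is to combine the loop‑paradigm description of $YM^{\mathfrak{S}(N)}$ with the convergence results of Section \ref{generaltheorem}, and then to transport the statement from piecewise affine loops to arbitrary rectifiable loops by the second part of Theorem \ref{main}. The key preliminary observation is that $\mathcal{T}_N$ is an \emph{evanescent} sequence of conjugacy classes in the sense of Definition \ref{convergencedef}: a transposition fixes $N-2$ points and has a single $2$-cycle, so $\lambda_N(1^{c})=2$, $\lambda(2)=1$ and $\alpha=0$. Consequently Theorem \ref{convergencefamille} applies to every finite family of independent $\mathcal{T}_N$-random walks on $\mathfrak{S}(N)$ together with their inverses: such a family converges in probability in $\mathcal{P}$-distribution and satisfies the asymptotic $\mathcal{P}$-factorization property. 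My strategy is to first prove that $W_l^N\to\phi(l)$ in probability for every $l\in L_0$; the asymptotic factorization statement will then be automatic, since each $W_{l_i}^N$ is bounded by $1$.

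First I would treat $l\in\mathsf{Aff}_0$. Pick $\mathbb{G}\in\mathcal{G}(\mathsf{Aff})$ with $0$ as a vertex and $l\in P(\mathbb{G})$, a spanning tree $T$ and boundary loops $(c_F)_{F\in\mathbb{F}}$. By the loop paradigm and Proposition \ref{creation}, under $YM^{\mathfrak{S}(N)}$ the holonomy $h_l$ is a fixed word $w$ in the facial holonomies $(h_{\mathsf{l}_F})_{F\in\mathbb{F}}$ and their inverses, the $h_{\mathsf{l}_F}$ being independent with $h_{\mathsf{l}_F}$ distributed as $S^N_{dx(F)}$. Viewing the faces as indexing $|\mathbb{F}|$ independent $\mathcal{T}_N$-random walks, every letter of $w$ belongs to the family $\mathbf{F}_N$ of Theorem \ref{convergencefamille}, and the normalized trace $W_l^N=\frac1N\mathrm{Tr}(h_l)$ is exactly a $\mathcal{P}$-moment $m_{p}(\,\cdot\,)$ of a tuple extracted from $\mathbf{F}_N$, where $p$ is the cyclic partition read off from the word $w$. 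By Theorem \ref{convergencefamille}, $W_l^N$ converges in probability to a deterministic number, which I call $\phi(l)$; as $0\le W_l^N\le 1$, also $\mathbb{E}[W_l^N]\to\phi(l)$.

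To pass to arbitrary $l\in L_0$ I would apply the second part of Theorem \ref{main} with $\Gamma_N=L(\Omega,\mathcal{A},\mathbb{P};\mathfrak{S}(N))$ equipped with $\overline{d_N}(X,Y)=\mathbb{E}[d_N(X,Y)]$ and with $H_N\colon\mathsf{Aff}\to\Gamma_N$ the multiplicative function $l\mapsto h_l$, exactly as in the proof of Theorem \ref{yangmills}; the area bound (\ref{hold}) holds with $K_N=\sqrt2$ by Lemma \ref{estime}. I take $(E,d)=(\mathcal{P}([0,1]),W_1)$, the compact (hence complete) space of probability measures on $[0,1]$ with the Wasserstein distance $W_1$, and $\psi_N\colon\Gamma_N\to E$ sending a random permutation $X$ to the law of $\frac1N\mathrm{Tr}(X)$. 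Since $\bigl|\frac1N\mathrm{Tr}(X-Y)\bigr|\le d_N(X,Y)$ for permutation matrices, one gets $W_1(\psi_N(X),\psi_N(Y))\le\mathbb{E}\bigl|\frac1N\mathrm{Tr}(X-Y)\bigr|\le\overline{d_N}(X,Y)$, so $\sup_N\|\psi_N\|_{\mathsf{Lip}}\le 1<\infty$; and by the previous paragraph $\psi_N(H_N(l))$ converges for every $l\in\mathsf{Aff}_0$, convergence in probability to a constant being the same as $W_1$-convergence of the laws. Theorem \ref{main} then produces, for every $l\in L_0$, a $W_1$-limit $\nu_l$ of $\mathrm{law}(W^N_l)$, with $l\mapsto\nu_l$ continuous for convergence with fixed endpoints. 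Approximating $l$ by piecewise affine loops $l_n$ with fixed endpoints we have $\nu_{l_n}=\delta_{\phi(l_n)}$, so $(\phi(l_n))_n$ is Cauchy in $\mathbb{R}$, its limit is independent of the approximating sequence, and $\nu_l=\delta_{\phi(l)}$ with $\phi(l):=\lim_n\phi(l_n)$. In particular $\nu_l$ is a Dirac mass, so $W^N_l\to\phi(l)$ in distribution and hence in probability, and the $W_1$-continuity of $l\mapsto\nu_l$ yields the continuity of $\phi$ for convergence with fixed endpoints.

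Finally, the asymptotic factorization is immediate: given $l_1,\dots,l_k\in L_0$, each $W^N_{l_i}\to\phi(l_i)$ in probability and $0\le W^N_{l_i}\le1$, so $W^N_{l_1}\cdots W^N_{l_k}\to\phi(l_1)\cdots\phi(l_k)$ in probability, and the dominated convergence theorem gives $YM^{\mathfrak{S}(N)}[W^N_{l_1}\cdots W^N_{l_k}]\to\phi(l_1)\cdots\phi(l_k)$. The step I expect to be the main obstacle is the bookkeeping in the piecewise affine case: one must verify carefully, through the loop paradigm and Proposition \ref{creation}, that $\frac1N\mathrm{Tr}(h_l)$ is literally of the form $m_p(\mathbf{F}_N)$ for the correct partition $p$ and the correct tuple of elements of $\mathbf{F}_N$ (accounting for inverses and for repeated facial holonomies), so that Theorem \ref{convergencefamille} applies without modification; the rest of the argument is soft.
\qed
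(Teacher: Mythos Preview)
Your proof is correct and follows the same overall architecture as the paper: first establish convergence in probability for piecewise affine loops via the loop paradigm and Theorem \ref{convergencefamille} (this is exactly Proposition \ref{convergenceaffine}, which you have reproduced), then invoke the extension principle of Theorem \ref{main} to reach all of $L_0$, and finally deduce factorization by dominated convergence.

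The one genuine point of divergence is the choice of target space $(E,d)$ and map $\psi_N$ when applying Theorem \ref{main}. The paper takes $E=L(\Omega,\mathcal{A},\mathbb{P};\mathbb{R})$ with $d(X,Y)=\mathbb{E}|X-Y|$ and $\psi_N(X)=\frac{1}{N}\mathrm{Tr}(X)$ viewed as a random variable; this forces one to first realize all the processes $(h^N_p)_{p\in P}$, for every $N$, on a single probability space $(\Omega,\mathcal{A},\mathbb{P})$, and then convergence in $E$ is directly $L^1$ (hence probability) convergence. You instead take $E=(\mathcal{P}([0,1]),W_1)$ and $\psi_N(X)=\mathrm{law}\bigl(\frac{1}{N}\mathrm{Tr}(X)\bigr)$, which uses only distributional information and sidesteps the coupling step; the price is that Theorem \ref{main} only hands you convergence in law, which you then upgrade to convergence in probability via the Dirac-mass argument. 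Both choices yield $\sup_N\|\psi_N\|_{\mathsf{Lip}}\le 1$ from the same Cauchy--Schwarz inequality (Lemma \ref{lip}), and both are valid; the paper's route is marginally more direct, yours marginally more self-contained.
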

The function $\phi$ in Theorem \ref{masterfield} is called the $\mathfrak{S}(\infty)$-master field. 

\begin{remark}
The vector space $\mathbb{C}[L_0]$ can be endowed with a structure of algebra with the concatenation of loops and can be endowed with a $*$-operation defined by: 
\begin{align*}
(\lambda l)^{*} = \overline{\lambda} l^{-1}.
\end{align*}
The function $\phi$ can be extended by linearity on $\mathbb{C}[L_0]$ and it satisfies some interesting properties: 
\begin{itemize}
\item $\phi$ is continuous for the convergence with fixed endpoints, 
\item $\phi$ is invariant by the homeomorphisms which preserves the Lebesgue measure, 
\item $\phi$ defines an application on $\mathbb{C}[L_0]$: 
\begin{align*}
\Phi : (l,l') \mapsto \phi(l l'^{*}).
\end{align*}
which satisfies the conjugate symmetry, the linearity in the first argument and the positiveness properties: for any $(a_i)_{i=1}^{k} \in \mathbb{C}$ and $(l_i)_{i=1}^{k}$, $\sum_{i,j=1}^{k} a_i\overline{a_j}\phi(l_il_j^{*})$ is non negative.
\end{itemize}
\end{remark}

Let us prove Theorem \ref{masterfield} when one considers only piecewise affine loops. 

\begin{proposition}
\label{convergenceaffine}
For any loop $l$ in $\sf Aff_0$ the Wilson loop $W_{l}^{N}$ converges in expectation and in probability as $N$ tends to infinity to a constant $\phi(l)$. 
\end{proposition}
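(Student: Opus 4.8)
The plan is to reduce Proposition \ref{convergenceaffine} to the convergence results on random walks obtained in Section \ref{generaltheorem}, through the loop paradigm underlying Proposition \ref{creation}. Given a loop $l \in {\sf Aff}_0$, I would first pick a finite planar graph $\mathbb{G} \in \mathcal{G}({\sf Aff})$ with $l \in P(\mathbb{G})$ (always possible since $l$ is piecewise affine), fix a vertex $v_0$, a spanning tree $T$ and loops $(c_F)_{F\in\mathbb{F}}$ bounding the bounded faces. By the description of $\Phi_{T,(c_F)_{F\in\mathbb{F}}}$ recalled just before Proposition \ref{creation}, there is a word $w = {\sf l}_{F_1}^{\varepsilon_1}\cdots{\sf l}_{F_r}^{\varepsilon_r}$ in the facial lassos and their inverses such that $h_l = w\big((h_{{\sf l}_F})_{F\in\mathbb{F}},(h_{{\sf l}_F}^{-1})_{F\in\mathbb{F}}\big)$; and by Proposition \ref{creation}, the law of $(h_{{\sf l}_F})_{F\in\mathbb{F}}$ under $YM^{\mathfrak{S}(N)}$ (whose restriction to $P(\mathbb{G})$ is $\mu_{\mathbb{G}}$) is that of $\#\mathbb{F}$ \emph{independent} $\mathcal{T}_N$-random walks evaluated at the areas $(dx(F))_{F\in\mathbb{F}}$, these being conjugation-invariant in law by Lemma \ref{invariance}.

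Then I would recognise $W_l^N$ as a $\mathcal{P}$-moment. Since ${\sf nc}\big((1,\dots,r)\vee id\big)=1$, one has $\frac1N\mathrm{Tr}(M_1\cdots M_r) = m_{(1,\dots,r)}(M_1,\dots,M_r)$ for any $N\times N$ matrices $M_1,\dots,M_r$, up to the orientation convention relating $(1,\dots,r)$ with its transpose, which is immaterial here because $\mathrm{Tr}(h_l)=\mathrm{Tr}(h_l^{-1})$ for permutation matrices. Hence $W_l^N = \frac1N\mathrm{Tr}(h_l)$ is the $(1,\dots,r)$-moment of a sub-family of the family of independent $\mathcal{T}_N$-random walks and their inverses considered in Theorem \ref{convergencefamille}. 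Now the conjugacy class of transpositions satisfies $\lambda_N(1)=N-2$, $\lambda_N(2)=2$, $\lambda_N(1^c)=2$, so $\lambda_N(2)/\lambda_N(1^c)\to 1$ and $\lambda_N(1)/N\to 1$: the sequence $(\mathcal{T}_N)_{N}$ converges in the sense of Definition \ref{convergencedef}, with $\lambda(2)=1$, and is evanescent ($\alpha=0$). Therefore Theorem \ref{convergencefamille} applies and this family converges in probability in $\mathcal{P}$-distribution, so by Theorem \ref{th:conv} the $\mathcal{P}$-moment $W_l^N$ converges in probability to the deterministic number $\mathbb{E}m_{(1,\dots,r)}(\cdots) =: \phi(l)$. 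Since $h_l$ is a permutation matrix, $0\le W_l^N\le 1$ uniformly in $N$, so convergence in probability upgrades to $L^1$-convergence by bounded convergence; in particular $YM^{\mathfrak{S}(N)}[W_l^N]\to\phi(l)$, which settles both assertions. That $\phi(l)$ does not depend on the auxiliary choices ($\mathbb{G}$, $v_0$, $T$, $(c_F)_{F\in\mathbb{F}}$) is automatic, being the intrinsic limit in probability of $W_l^N$, consistency across graphs being guaranteed by the projective structure of $(\mu_{\mathbb{G}})_{\mathbb{G}}$.

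I expect the only genuine difficulty to be bookkeeping rather than conceptual: one must carefully match the word $w$ produced by the loop paradigm — in which a given facial lasso may occur several times and with either sign — with the definition (\ref{eq:mom}) of a $\mathcal{P}$-moment of the family in Theorem \ref{convergencefamille}, and check that passing from the convergence in $\mathcal{P}$-distribution of the whole continuous-time family to the evaluation at the specific areas $(dx(F))_{F\in\mathbb{F}}$ is legitimate. Both points become routine once the identity $\frac1N\mathrm{Tr}(M_1\cdots M_r)=m_{(1,\dots,r)}(M_1,\dots,M_r)$ is in place and the independence statement of Proposition \ref{creation} is used.
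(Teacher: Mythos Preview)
Your proposal is correct and follows essentially the same route as the paper's proof: decompose $h_l$ as a word in facial lassos via the loop paradigm, identify $W_l^N$ as a cycle-moment $m_\sigma$ of a tuple drawn from independent $\mathcal{T}_N$-random walks and their inverses, and conclude by Theorem \ref{convergencefamille} applied to the evanescent sequence $\lambda_N=(N-2,2,0,\dots)$. You even make explicit a couple of points the paper leaves implicit, namely the bounded-convergence step from convergence in probability to convergence in expectation, and the independence of $\phi(l)$ from the auxiliary graph data.
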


\begin{proof}
Let $l_0$  be a loop in $\sf Aff_0$. Let $\mathbb{G}$ be a graph in $\mathcal{G}({\sf Aff})$ such that $l_0$ is a loop in $\mathbb{G}$. Let us consider $T$ a covering tree of $\mathbb{G}$, let us consider for any bounded face $F$ of $\mathbb{G}$ a loop $c_F \in P(\mathbb{G})$ which represents $\partial F$ in the anti-clockwise orientation and let us consider the facial lassos ${\sf l}_{F}$ associated with these choices of tree and loops.  Let $w\left(\left({\sf l}_{F}\right)_{F \in \mathbb{F}}, \left({\sf l}_{F}^{-1}\right)_{F \in \mathbb{F}}\right)$ be a word in the letters $({\sf l}_F)_{F \in \mathbb{F}}$ and $\left({\sf l}_F^{-1}\right)_{F \in \mathbb{F}}$ such that $h_{l_0} = w\left( \left(h_{{\sf l}_F}\right)_{F \in \mathbb{F}},  \left(h_{{\sf l}_F^{-1}}\right)_{F \in \mathbb{F}} \right)$.

Using Proposition \ref{creation}, the random variables $(h_{{\sf l}_F})_{F \in \mathbb{F}}$ defined on the probability space $\left(\mathcal{M}ult(L_0,\mathbb{G}),\mathcal{B}, YM^{\mathfrak{S}(N)}\right)$ are independent and for any $F\in \mathbb{F}$, $h_{{\sf l}_F}$ has the same law as $S^{N}_{dx(F)}$. For all positive integer $N$, let $\left(S_{t,N}^{(1)}\right)_{t \geq 0}$, ..., $\left(S_{t,N}^{(\#\mathbb{F})}\right)_{t \geq 0}$ be $\#\mathbb{F}$ independent random walks identically distributed as $(S_{t}^{N})_{t \geq 0}$. The discussion we just had implies that there exist $i_1,...,i_k,j_1,...,j_{k'}$ in $\{1,...,\#\mathbb{F}\}$ and $\sigma$ a $k+k'$-cycle such that the Wilson loop $W_l^{N}$ is equal to: 
\begin{align*}
m_{\sigma}\left( S_{t,N}^{(i_1)}, ..., S_{t,N}^{(i_k)}, \left(S_{t,N}^{(j_1)}\right)^{-1}, ..., \left(S_{t,N}^{(j_{k'})}\right)^{-1} \right), 
\end{align*}
where we recall that $m_{\sigma}$ are the observables defined in Section \ref{sec:remind}. The Theorem \ref{convergencefamille}, applied to the evanescent family $\left((N-2,2,0,...,0)\right)_{N \in \mathbb{N}}$ allows us to conclude. 
\qed \end{proof}

In order to generalize Proposition \ref{convergenceaffine} and to prove Theorem \ref{masterfield}, we need an estimate on the Lipschitz norm of the function which gives the fraction of fixed points of a permutation.  Recall Definition \ref{distance} where we defined a distance $d_N$ on $\mathfrak{S}(N)$.

\begin{lemma}
\label{lip}
For any positive integer $N$ and any permutations $\sigma$ and $\sigma'$ in $\mathfrak{S}(N)$, one has: 
\begin{align*}
\frac{1}{N}\!\left\lvert Tr(\sigma)-Tr(\sigma')\right\rvert\ \leq d_{N}(\sigma,\sigma'). 
\end{align*}
\end{lemma}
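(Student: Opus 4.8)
The plan is to realize both $\mathrm{Tr}(\sigma)-\mathrm{Tr}(\sigma')$ and $d_N(\sigma,\sigma')$ through the Hilbert–Schmidt inner product on $M_N(\mathbb{R})$ and then apply Cauchy–Schwarz. Write $\langle A,B\rangle = \mathrm{Tr}(A\,{}^{t}B)$ for the (real) Hilbert–Schmidt inner product, with associated norm $\|A\| = \langle A,A\rangle^{1/2}$. First I would observe that $\mathrm{Tr}(\sigma)-\mathrm{Tr}(\sigma') = \mathrm{Tr}\big((\sigma-\sigma')\,{}^{t}\sigma'\big) + \mathrm{Tr}\big(\sigma'\,{}^{t}\sigma'\big) - \mathrm{Tr}(\sigma')$; but this is clumsy. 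A cleaner route: since $\sigma'$ is a permutation matrix, ${}^{t}\sigma' = \sigma'^{-1}$ and $\mathrm{Tr}(\sigma) = \mathrm{Tr}(\sigma \sigma'^{-1}\sigma') = \langle \sigma\sigma'^{-1}, \sigma'\rangle$ using orthogonality again, but the most direct thing is simply
\begin{align*}
\mathrm{Tr}(\sigma)-\mathrm{Tr}(\sigma') = \mathrm{Tr}\big((\sigma-\sigma')\,{}^{t}\sigma'\big) = \langle \sigma-\sigma', \sigma'\rangle,
\end{align*}
which holds because $\mathrm{Tr}(\sigma\,{}^{t}\sigma') - \mathrm{Tr}(\sigma'\,{}^{t}\sigma') = \mathrm{Tr}(\sigma\,{}^{t}\sigma') - N$, and we want this to equal $\mathrm{Tr}(\sigma)-\mathrm{Tr}(\sigma')$; that is not an identity in general, so instead I would pair against $\mathrm{Id}_N$: note $\mathrm{Tr}(A) = \langle A, \mathrm{Id}_N\rangle$, hence $\mathrm{Tr}(\sigma)-\mathrm{Tr}(\sigma') = \langle \sigma-\sigma', \mathrm{Id}_N\rangle$.

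The next step is Cauchy–Schwarz: $|\langle \sigma-\sigma',\mathrm{Id}_N\rangle| \le \|\sigma-\sigma'\|\cdot\|\mathrm{Id}_N\| = \|\sigma-\sigma'\|\cdot\sqrt{N}$. Then I divide by $N$ to get $\frac1N|\mathrm{Tr}(\sigma)-\mathrm{Tr}(\sigma')| \le \frac{1}{\sqrt N}\|\sigma-\sigma'\| = \big(\frac1N\|\sigma-\sigma'\|^2\big)^{1/2}$. Finally, by the formula recorded just after Definition \ref{distance}, $\frac1N\|\sigma-\sigma'\|^2 = \frac1N\mathrm{Tr}\big((\sigma-\sigma')\,{}^{t}(\sigma-\sigma')\big) = d_N(\sigma,\sigma')^2$, so the right-hand side is exactly $d_N(\sigma,\sigma')$, which is the claimed inequality.

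I do not expect any genuine obstacle here; the only thing to be careful about is choosing the right vector to pair against in the Cauchy–Schwarz step (pairing $\sigma-\sigma'$ against $\mathrm{Id}_N$ rather than against $\sigma'$, which would produce the wrong trace). Everything else is the elementary identity $\mathrm{Tr}(A)=\langle A,\mathrm{Id}_N\rangle$ together with the already-established expression for $d_N$ in terms of the Hilbert–Schmidt norm of $\sigma-\sigma'$. One could alternatively give a purely combinatorial proof — $\mathrm{Tr}(\sigma)$ counts fixed points, $Nd_N(\sigma,\sigma')^2/2 = N - \mathrm{Tr}(\sigma\sigma'^{-1})$ counts non-fixed points of $\sigma\sigma'^{-1}$, and one bounds the difference in fixed-point counts by the number of coordinates where $\sigma$ and $\sigma'$ disagree — but the Hilbert–Schmidt argument is shorter and dovetails with the notation already set up in this section.
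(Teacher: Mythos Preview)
Your final argument is correct and is essentially the same as the paper's: both write $\mathrm{Tr}(\sigma)-\mathrm{Tr}(\sigma')=\mathrm{Tr}(\sigma-\sigma')=\langle \sigma-\sigma',\mathrm{Id}_N\rangle$ and apply Cauchy--Schwarz against $\mathrm{Id}_N$, then identify $\big(\tfrac{1}{N}\mathrm{Tr}((\sigma-\sigma')\,{}^{t}(\sigma-\sigma'))\big)^{1/2}$ with $d_N(\sigma,\sigma')$. The paper simply records this as a one-line chain of inequalities; your proposal contains the same content once the exploratory false starts (pairing against $\sigma'$) are trimmed away.
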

\begin{proof}
It is a consequence of the Cauchy-Schwarz's inequality:  
\begin{align*}
\frac{1}{N}\!\left\lvert Tr(\sigma)-Tr(\sigma')\right\rvert\!=\!\frac{1}{N} \left\lvert Tr(\sigma-\sigma')\right\rvert\!\leq\!\!\left[\frac{1}{N} Tr\!\!\left((\sigma\!-\!\sigma')\!\!\text{ }^{t}(\sigma\!-\!\sigma')\right)\right]^{\frac{1}{2}}\!\!\!=\! d_{N}(\sigma,\sigma'), 
\end{align*}
hence the result. 
\qed \end{proof}

We can finish the proof of Theorem \ref{masterfield}. 
\begin{proof}[Theorem \ref{masterfield}]
We will use the second part of Theorem \ref{main}. For this, we consider $\left(\Omega, \mathcal{A}, \mathbb{P}\right)$ a probability space on which is defined for each positive integer $N$ a process $\left(h_{p}^{N}\right)_{p \in P}$ whose law is the law of the canonical process $\left(h_{p}\right)_{p \in P}$ under the $\mathfrak{S}(N)$-Yang-Mills measure associated with the $\mathcal{T}_N$-random walk on $\mathfrak{S}(N)$. Recall the notations defined in the proof of Theorem \ref{yangmills}: we consider $\Gamma_N = L\left(\Omega, \mathcal{A}, \mathbb{P}; \mathfrak{S}(N)\right)$ endowed with the distance $\overline{d_N}$ and we consider the mappings $H_N$ defined from $P$ to $\Gamma_N$. 

Let us denote by $E$ the space $L\left(\Omega, \mathcal{A}, \mathbb{P}; \mathbb{R}\right)$ of real valued random variables defined on $\Omega$. Let us endow $E$ with the distance $d\left(X,Y \right)= \mathbb{E}\left[|X-Y|\right]$. For any positive integer $N$, let: 
\begin{align*}
\psi_N: \Gamma_N &\to E \\
S &\mapsto \frac{1}{N}Tr\left(S\right). 
\end{align*}

Using Lemma \ref{lip}, for any positive integer $N$, $\psi_N$ is a Lipschitz function and $\underset{N \in \mathbb{N}}{\sup} || \psi_N||_{{\sf Lip}} \leq 1$. Besides, using Proposition \ref{convergenceaffine}, and using the dominated convergence theorem, we know that for any $l \in {\sf Aff}_0$, $\psi_N(H_N(l))$ converges in $E$ to a limit which is the non-random variable $\phi(l)$. At last, Lemma \ref{estime} shows that the constant $K_N$ in (\ref{hold}) can be taken equal to $\sqrt{2}$ for any positive integer $N$. Thus we can apply the second part of Theorem \ref{main}: for any $l \in L_0$, $\psi_N \left(H_N(l) \right)= W_l^{N}$ converges in $E$ to a limit $\phi(l)$ and the function:
 \begin{align*}
\phi: L_0 &\to E \\
l &\mapsto \phi(l) 
\end{align*}
is continuous for the convergence with fixed endpoints. 

Let $l$ be a loop based at $0$: one can approximate $l \in L_0$ by a sequence of loops $\left(l_n\right)_{n \in \mathbb{N}}$ in ${\sf Aff}_0$. Since for any positive integer $n$, $\phi(l_n)$ is almost surely constant then $\phi(l)$ is almost surely constant. Besides, the convergence of $W_l^{N}$ to $\phi(l)$ in probability holds since we proved that: 
\begin{align*}
\overline{d_N}(W_l^{N}, \phi(l)) = \mathbb{E}\left[ | W_{l}^{N}-\phi(l)|\right] \underset{N \to \infty}{\longrightarrow} 0. 
\end{align*}
The asymptotic factorization property is a simple consequence of the dominated convergence theorem. 
\qed \end{proof}

\begin{remark}
Theorem \ref{masterfield} is also true for more general sequences of Yang-Mills measures, the proof follows exactly the same steps.
\end{remark}

\section{Random ramified coverings}
\label{sec:cov}
In this section, we present a natural model of random ramified coverings on the unit disk $\mathbb{D}$. This model was first defined in \cite{Levy1}, in Chapter 5 in the general setting of ramified $G$-bundles when $G$ is a finite group. We translate the results for random ramified coverings without any conditions on the monodromy on the boundary. This needs some simple verifications which will not be further discussed here. Let $Y$ be a finite subset of $\mathbb{D}\setminus \partial \mathbb{D}$. 

\begin{definition}
A ramified covering of the disk with ramification locus $Y$ is a continuous mapping $\pi: R \to \mathbb{D}$ from a surface $R$ such that the following conditions hold: 
\begin{enumerate}
\item the restriction of $\pi$ to $\pi^{-1}\left(\mathbb{D}\setminus Y\right)$ is a covering,
\item for all $y \in Y$ and any $p \in \pi^{-1}(y)$, one can find a neighborhood $U$ of $p$ and an integer $n \geq 1$ such that the mapping: 
\begin{align*}
\pi_{| U}: (U,p) &\to (\pi(U), y)\\
x &\mapsto \pi(x)
\end{align*}
is conjugated to the mapping $z \mapsto z^{n}$: $(\mathbb{C},0) \to (\mathbb{C},0).$
\end{enumerate}
The integer $n$ is the order of ramification of $p$ and will be denoted by ${\sf or}( p)$. 

Let $N$ be a positive integer. A ramified covering $\pi: R \to \mathbb{D}$ with ramification locus $Y$ has degree $N$ if the restriction $\pi$ to $\pi^{-1}\left(\mathbb{D}\setminus Y\right)$ is a covering of degree~$N$. 
\end{definition}

For sake of simplicity, in this paper, we will only consider simple ramified coverings but it is easy to extend the results to general ramified covering by using Chapter $5$ of~\cite{Levy1}.

\begin{definition}
Let $R$ be a ramified covering of the disk with ramification locus $Y$. Let $x \in Y$ be a ramification point of $R$. It is a simple ramification point if there exists $p_0\in \pi^{-1}(x)$ such that ${\sf or} ( p) = 2$, and for any other $p \in \pi^{-1}(x)$, ${\sf or }( p)=1$. The ramified covering $R$ is simple if for any $x \in Y$, $x$ is a simple ramification point. 
\end{definition}

Often we will denote the covering $\pi:R\to \mathbb{D}$ just by $R$. The set of simple ramified coverings of the disk is too big to be interesting. As one does for the theory of random maps, we will only work with the isomorphism classes of simple ramified coverings.

\begin{definition}
Let $\pi: R \to \mathbb{D}$ and $\pi': R' \to \mathbb{D}$ be two simple ramified coverings. They are isomorphic if there exists a homeomorphism $h: R \to R'$ such that $\pi' \circ h = \pi.$
\end{definition}

Let $N$ be a positive integer. We denote by $\mathcal{R}^{N}(Y)$ the set of isomorphism classes of simple ramified coverings of degree $N$ of $\mathbb{D}$ with ramification locus equal to $Y$. In fact, it is even easier to work with labelled simple ramified coverings since the set of automorphisms of a labelled ramified covering is trivial. 

\begin{definition}
Let $\pi: R \to  \mathbb{D}$ be a simple ramified covering of the disk of degree $N$ with ramification locus equal to $Y$. Let $x$ be in $\mathbb{D} \setminus Y$. A labelling $l$ of $R$ at the point $x$ is a bijection from $\{1,...,n\}$ to $\pi^{-1}\left(x\right)$. The pair $(R,l)$ is a labelled simple ramified covering based at $x$. 

Let $(R, l)$, $(R,l')$ be two labelled simple ramified coverings based at $x$. They are isomorphic if there exists an isomorphism of simple ramified coverings $h: R \to R'$ such that $h \circ l = l'$. 
\end{definition}
Let $x$ be a point of $\mathbb{D}\setminus Y$. The set of isomorphism classes of labelled simple ramified coverings of $\mathbb{D}$ with ramification locus $Y$ based at $x$ and with degree $N$ is denoted by $\mathcal{R}_{x}^{N}(Y)$. In order to define a measure on $\mathcal{R}^{N}(Y)$ or $\mathcal{R}_{x}^{N}(Y)$, we need to define a $\sigma$-field. The $\sigma$-field we will consider will be a Borel $\sigma$-field.

\begin{definition}
We consider on $\mathcal{R}^N_x(Y)$ the topology generated by: 
\begin{align*}
\mathcal{V}\left(\left(R,l\right),U\right) = \big\{R' \in \mathcal{R}^N_x(Y)| R_{| M\setminus U} \simeq R'_{| M\setminus U}\big\},
\end{align*}
where $U$ is any open subset such that $Y \subset U \subset \mathbb{D}\setminus x$.

Also, we consider on $\mathcal{R}^N(Y)$ the topology generated by: 
\begin{align*}
\mathcal{V}(R,U) = \big\{ R' \in \mathcal{R}^N(Y)| R_{| M\setminus U} \simeq R'_{| M\setminus U}\big\}, 
\end{align*}
where $U$ is any open subset such that $Y \subset U \subset \mathbb{D}$. 
\end{definition}

Let $\mathcal{T}_{N}$ be the set of transpositions in $\mathfrak{S}(N)$. The set $\mathcal{R}^N_x(Y)$ is in bijection with $\left(\mathcal{T}_{N}\right)^{\#Y}$: this is a finite set, and thus we can consider the uniform measure on $\mathcal{R}^N_x(Y)$. 
When one wants to define a measure on a finite set of objects, it is common to take into account the size of the automorphism group: in case of labelled ramified coverings, the uniform measure is the natural one. 

\begin{definition}
The uniform measure on $\mathcal{R}^N _x(Y)$ is:
\begin{align*}
{\sf \bold{U}}^{N}_{x,Y} = \frac{1}{\left(\#\mathcal{T}_n\right)^{\#Y}}\sum_{(R, l) \in \mathcal{R}^N_{x}(Y)} \delta_{(R, l)}. 
\end{align*}
The natural measure on $\mathcal{R}^{N}(Y)$ is: 
\begin{align*}
{\sf \bold{U}}^{N}_{Y} =  \frac{1}{\left(\#\mathcal{T}_n\right)^{\#Y}}\sum_{R \in \mathcal{R}^{N}(Y)} \frac{n!}{\#{\sf Aut}( R)}\delta_{R}. 
\end{align*}
\end{definition}

Using the Equation $(63)$ of \cite{Levy1}, one gets the following lemma. 
\begin{lemma}
Let $\mathcal{F}: \mathcal{R}^N_x(Y) \to \mathcal{R}^{N}(Y)$ be the application where one forgets about the labelling. We have ${\sf \bold{U}}_{Y}^{N} = {\sf \bold{U}}_{x,Y}^{N} \circ \mathcal{F}^{-1}. $\end{lemma}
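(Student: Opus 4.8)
The plan is to verify the equality of two finite atomic measures on the finite set $\mathcal{R}^{N}(Y)$ by comparing the mass they assign to each singleton, which reduces the statement to a single combinatorial fibre-count. First I would observe that, by definition of the push-forward, for every $R \in \mathcal{R}^{N}(Y)$
\begin{align*}
\left({\sf \bold{U}}^{N}_{x,Y}\circ\mathcal{F}^{-1}\right)(\{R\}) = {\sf \bold{U}}^{N}_{x,Y}\big(\mathcal{F}^{-1}(\{R\})\big) = \frac{\#\mathcal{F}^{-1}(\{R\})}{\left(\#\mathcal{T}_{N}\right)^{\#Y}},
\end{align*}
whereas by definition ${\sf \bold{U}}^{N}_{Y}(\{R\}) = \frac{1}{\left(\#\mathcal{T}_{N}\right)^{\#Y}}\cdot\frac{N!}{\#{\sf Aut}(R)}$. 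Hence everything comes down to the identity
\begin{align*}
\#\mathcal{F}^{-1}(\{R\}) = \frac{N!}{\#{\sf Aut}(R)} \qquad \text{for every } R \in \mathcal{R}^{N}(Y).
\end{align*}

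Second I would establish this fibre-count. Fix such an $R$ and the base point $x \in \mathbb{D}\setminus Y$. Every class in $\mathcal{F}^{-1}(\{R\})$ has a representative of the form $(R,l)$ with $l$ a labelling of $R$ at $x$: indeed if $\mathcal{F}(R',l') = R$, choose an isomorphism $g\colon R' \to R$ of unlabelled coverings and then $(R',l')$ is isomorphic, as a labelled covering, to $(R, g\circ l')$. The set of labellings of $R$ at $x$ is a torsor under $\mathfrak{S}(N)$ acting by precomposition, so it has exactly $N!$ elements, and $(R,l)$, $(R,l')$ represent the same class in $\mathcal{R}^{N}_{x}(Y)$ precisely when $l' = h\circ l$ for some $h\in{\sf Aut}(R)$. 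The action of ${\sf Aut}(R)$ on labellings by postcomposition is free: an automorphism of the covering over $\mathbb{D}$ fixing $\pi^{-1}(x)$ pointwise restricts to a deck transformation of the genuine (unramified) covering $\pi^{-1}(\mathbb{D}\setminus Y)\to\mathbb{D}\setminus Y$ fixing a point in each connected component of the total space, hence is the identity there, hence the identity on all of $R$ by density and continuity. Therefore $\mathcal{F}^{-1}(\{R\})$ is in bijection with the set of ${\sf Aut}(R)$-orbits of labellings, of cardinality $N!/\#{\sf Aut}(R)$. This is exactly the content of Equation $(63)$ of \cite{Levy1}, which may be quoted directly in place of the orbit-stabilizer discussion above.

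Finally, combining the two displays yields, for every $R \in \mathcal{R}^{N}(Y)$,
\begin{align*}
\left({\sf \bold{U}}^{N}_{x,Y}\circ\mathcal{F}^{-1}\right)(\{R\}) = \frac{1}{\left(\#\mathcal{T}_{N}\right)^{\#Y}}\cdot\frac{N!}{\#{\sf Aut}(R)} = {\sf \bold{U}}^{N}_{Y}(\{R\}),
\end{align*}
and since both sides are finite atomic measures on the finite set $\mathcal{R}^{N}(Y)$, agreement on singletons gives ${\sf \bold{U}}^{N}_{Y} = {\sf \bold{U}}^{N}_{x,Y}\circ\mathcal{F}^{-1}$. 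The only genuinely non-formal ingredient is the freeness of the ${\sf Aut}(R)$-action, equivalently the fibre cardinality $N!/\#{\sf Aut}(R)$; this is precisely the cited equation of \cite{Levy1}, and everything else is bookkeeping about push-forwards of measures on finite sets.
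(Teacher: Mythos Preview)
Your proof is correct and follows exactly the route the paper indicates: the paper's entire argument is the single sentence ``Using the Equation $(63)$ of \cite{Levy1}, one gets the following lemma,'' and your proposal simply unpacks what that citation says, namely the fibre count $\#\mathcal{F}^{-1}(\{R\}) = N!/\#{\sf Aut}(R)$, together with the trivial push-forward bookkeeping. There is no difference in approach; you have just written out in full what the paper leaves to the reference.
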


 Let $\mathcal{P}_N(dY)$ be a Poisson point process on $\mathbb{D}$ of intensity equal to $\frac{N}{2} dx$. On the set of finite subsets of $\mathbb{D}$, $F(\mathbb{D})$, we will consider the topology which makes the bijection $F(\mathbb{D}) \simeq \cup_{k\geq 0} (\mathbb{D}^{k}\setminus \Delta_{k}) / \mathfrak{S}_k$ continuous. In \cite{Levy1}, Proposition $5.3.3$, T. L\'{e}vy showed that: 

\begin{lemma}
The application which sends $Y$, a finite subset of $\mathbb{D}$, on ${\sf \bold{U}}^{N}_{Y}$ and the one which sends $Y$, a finite subset of $\mathbb{D}\setminus \{x\}$, on ${\sf \bold{U}}_{x,Y}^{N}$ are continuous. 
\end{lemma}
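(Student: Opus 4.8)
The plan is to reduce the statement to a fact about ordinary (unramified) coverings of a punctured disk, via the monodromy description of ramified coverings. First I would use the previous lemma: since $\mathcal{F}$ (forgetting the labelling) extends to a continuous map between the spaces of coverings with \emph{variable} ramification locus, and since pushing a probability measure forward by a continuous map is continuous for the weak topology, the identity ${\sf \bold{U}}^{N}_{Y} = {\sf \bold{U}}^{N}_{x,Y}\circ\mathcal{F}^{-1}$ shows that it suffices to prove that $Y \mapsto {\sf \bold{U}}^{N}_{x,Y}$ is continuous. Here ${\sf \bold{U}}^{N}_{x,Y}$ is regarded as a Borel probability measure on the space of isomorphism classes of labelled simple ramified coverings of $\mathbb{D}$ based at $x$, with arbitrary finite ramification locus in $\mathbb{D}\setminus\{x\}$, equipped with the topology generated by the sets $\mathcal{V}((R,l),U)$. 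Since in the topology on $F(\mathbb{D})$ the cardinality is locally constant (each stratum $(\mathbb{D}^k\setminus\Delta_k)/\mathfrak{S}_k$ being open), it is enough to fix a sequence $Y_n=\{y^n_1,\dots,y^n_k\}$ converging to $Y=\{y_1,\dots,y_k\}$ with $y^n_i\to y_i$ in $\mathbb{D}\setminus\{x\}$ and to show ${\sf \bold{U}}^{N}_{x,Y_n}\to{\sf \bold{U}}^{N}_{x,Y}$ weakly.

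Next I would fix, once and for all, loops $\gamma_1,\dots,\gamma_k$ based at $x$ such that $\gamma_i$ winds once around $y_i$, encircles no other point of $Y$, and, for $n$ large, encircles exactly $y^n_i$ among the points of $Y_n$. Since the monodromy of a labelled simple ramified covering around a simple ramification point is a transposition, the map $(R,l)\mapsto\big({\sf mon}_{\gamma_1}(R,l),\dots,{\sf mon}_{\gamma_k}(R,l)\big)$ is a bijection from $\mathcal{R}^N_x(Y)$, respectively from $\mathcal{R}^N_x(Y_n)$ for $n$ large, onto $(\mathcal{T}_N)^k$, under which both ${\sf \bold{U}}^{N}_{x,Y}$ and ${\sf \bold{U}}^{N}_{x,Y_n}$ become the uniform measure on the \emph{same} finite set $(\mathcal{T}_N)^k$. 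Testing against a bounded continuous function $f$ on the space of coverings then reduces the weak convergence to a single elementary assertion: for each fixed tuple $\tau=(\tau_1,\dots,\tau_k)\in(\mathcal{T}_N)^k$, the labelled covering $R_n(\tau)$ with ramification locus $Y_n$ and monodromy $\tau_i$ around $y^n_i$ converges, in the topology above, to the labelled covering $R(\tau)$ with ramification locus $Y$ and monodromy $\tau_i$ around $y_i$.

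To prove this convergence I would observe that the neighbourhoods $\mathcal{V}((R(\tau),l),U)$ with $U$ a disjoint union of small open disks $U_i\ni y_i$, each small enough to lie inside the loop $\gamma_i$ and to meet no other $\gamma_j$, form a neighbourhood basis of $R(\tau)$, because $U'\subseteq U$ implies $\mathcal{V}((R(\tau),l),U')\subseteq\mathcal{V}((R(\tau),l),U)$. For $n$ large we have $y^n_i\in U_i$, so both $R_n(\tau)|_{\mathbb{D}\setminus U}$ and $R(\tau)|_{\mathbb{D}\setminus U}$ are genuine degree-$N$ coverings of $\mathbb{D}\setminus U$; since $\pi_1(\mathbb{D}\setminus U,x)$ is free on the classes of $\gamma_1,\dots,\gamma_k$ and the two monodromy morphisms both send $\gamma_i$ to the \emph{same} $\tau_i$, the classical classification of coverings with labelled fibre shows they are isomorphic as labelled coverings, i.e. $R_n(\tau)\in\mathcal{V}((R(\tau),l),U)$. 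This gives term-by-term convergence of the finite sums $\frac{1}{(\#\mathcal{T}_N)^k}\sum_{\tau}f(R_n(\tau))$, hence ${\sf \bold{U}}^{N}_{x,Y_n}\to{\sf \bold{U}}^{N}_{x,Y}$, and the lemma follows.

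The hard part is not any computation but the topological bookkeeping: one must set up the topology on the space of coverings with variable ramification locus so that all the ${\sf \bold{U}}^{N}_{x,Y}$ live on a common space, check that the disjoint-disk neighbourhoods $\mathcal{V}((R,l),U)$ are cofinal, and verify that a fixed system of loops $\gamma_i$ stays admissible for all $Y_n$ once $n$ is large — equivalently, that small perturbations of the ramification points do not change the monodromy of the ``same'' covering. These are precisely the ``simple verifications'' the section defers, and, as with the rest of this section, none of them appeals to the large-$N$ machinery of Sections \ref{conv} and \ref{YMConv}.
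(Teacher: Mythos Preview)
The paper does not give its own proof of this lemma: it simply records the statement as a consequence of Proposition~5.3.3 in L\'{e}vy~\cite{Levy1}. So there is nothing in the paper to compare against beyond the citation.

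Your argument is correct and is essentially the natural monodromy proof one would expect to find behind that citation: fix the cardinality $k$, parametrize both $\mathcal{R}^N_x(Y)$ and $\mathcal{R}^N_x(Y_n)$ by the common finite set $(\mathcal{T}_N)^k$ via a fixed system of loops, and then check that for each fixed monodromy datum $\tau$ the corresponding coverings converge in the $\mathcal{V}(\cdot,U)$-topology. The key observation --- that for $n$ large the restrictions to $\mathbb{D}\setminus U$ are genuine coverings with identical monodromy representations of the free group $\pi_1(\mathbb{D}\setminus U,x)$, hence isomorphic --- is exactly right. The reduction to the labelled case via the pushforward identity ${\sf \bold{U}}^{N}_{Y}={\sf \bold{U}}^{N}_{x,Y}\circ\mathcal{F}^{-1}$ is also fine; the continuity of $\mathcal{F}$ on the space with variable ramification locus is immediate from the definition of the topologies, since an isomorphism of labelled restrictions $R|_{\mathbb{D}\setminus U}\simeq R'|_{\mathbb{D}\setminus U}$ is in particular an isomorphism of unlabelled restrictions. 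Your closing paragraph correctly identifies the only points requiring care, and none of them is problematic.
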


Thus we can define the following measures on simple ramified coverings on the disk (labelled or not). 
\begin{definition}
We consider on $\mathcal{R}^N(Y)$ and $\mathcal{R}^N_x(Y)$ respectively the Borel measures: 
\begin{align*}
{\sf \bold{U}}^N = \int {\sf \bold{U}}_Y^{N} \mathcal{P}_N(dY) \text{ and }
{\sf \bold{U}}_{x}^N  =  \int {\sf \bold{U}}_{x, Y}^{N} \mathcal{P}_N(dY). 
\end{align*}
\end{definition}

The main result in this article is that, in some sense, the measures ${\sf \bold{U}}^N$ or ${\sf \bold{U}}^N_{x}$ converge as $N$ goes to infinity. This assertion has to be taken non-rigorously as the measures are not supported by the same space and the limiting object is not defined. What we will show instead is that the monodromies of the ramified coverings converge in probability. From now on, we will only consider the measure ${\sf \bold{U}}^N_{x}$ on labelled ramified coverings. The case of non labelled ramified coverings could be also studied, yet it would be necessary to be a little more careful on how we define the associated holonomy process thus, for sake of clarity, we prefered to present the results in the setting of labelled ramified coverings.

Let $R$ be a ramified covering in $\mathcal{R}_{x}^{N}(Y)$ and let $l$ be the labelling of the sheets of $R$ at $x$. Let $c$ be a rectifiable loop in $\mathbb{D}$ based at $x$. We can transport the labelling $l$ along the path $c$: it gives us an other labelling $l'$ of the sheets above $x$. The unique element $\sigma  \in \mathfrak{S}(N)$ such that $l'=l \sigma$ is called the monodromy of $R$ along $c$ with respect to $l$ and is denoted by ${\sf mon}_{R,l}( c)$. Suppose that we label $R$ at $x$ with $l \circ \eta$ where $\eta \in \mathfrak{S}(N)$, then $c$ transports the labelling $l \eta$ on $l {\sf mon}_{R,l}(c ) \eta$: it shows that for any curves $c_1$ and $c_2$ based at $x$, 
\begin{align}
\label{diagconj}{\sf mon}_{R,l}(c_1 ) &= \eta^{-1} {\sf mon}_{R,l \circ \eta}( c_1) \eta, \\
{\sf mon}_{R,l}(c_1^{-1}) &=\left( {\sf mon}_{R,l}(c_1 )\right)^{-1}, \\
{\sf mon}_{R,l}(c_1 c_2) &= {\sf mon}_{R,l}(c_2) {\sf mon}_{R,l}(c_1), 
\end{align}
where we recall that $c_1c_2$ is the concatenation of $c_1$ with $c_2$ and $c_1^{-1}$ is the curve $c_1$ with reversed orientation. 

If $c$ is a rectifiable curve, $\mathcal{P}_N(dY)$-a.s. the range of $c$ is inside $\mathbb{D}\setminus Y$. We can thus define the holonomy process associated with ${\sf \bold{U}}^{N}_x$ by using the monodromy along any rectifiable loop based at $x$. The set of rectifiable loops in $\mathbb{D}$ based at $x$ is denoted by $L_{x}(\mathbb{D})$. 

\begin{definition}
The random holonomy field on $L_{x}(\mathbb{D})$ associated with ${\sf \bold{U}}^{N}_x$ is the process $\left( {\sf m}( c) \right)_{c \in L_{x}(\mathbb{D})}$ defined on $\left( \mathcal{R}_{x}^{N}, {\sf \bold{U}}^{N}_x\right)$ where: 
\begin{align*}
 {\sf m}_N(c )\!: \ \ \mathcal{R}_{x}^{N} &\to \mathfrak{S}(N) \\
  (R,l)&\mapsto {\sf mon}_{R,l}(c ). 
\end{align*}
For any $c_1$ and $c_2$ in $L_x(\mathbb{D})$, ${\sf \bold{U}}^{N}_x$-a.s, 
\begin{align*}
{\sf m}_{N}(c_1 c_2) &= {\sf m}_{N}(c_2){\sf m}_{N}(c_1), \\
{\sf m}_{N}\left(c_1^{-1}\right) &= {\sf m}_{N}(c_1)^{-1}.
\end{align*}
\end{definition}

It is quite natural to wonder how a change of the base point $x$ changes the random holonomy field: in order to do so, we need to consider the same index set for the random processes.
\begin{definition}
 Let $c_{0 \to x}$ be a path from $0$ to $x$. The random holonomy field on $L_{0}(\mathbb{D})$ associated with $ {\sf \bold{U}}^{N}_x$ is the process $\left( {\sf m}_{N}\left( c_{0 \to x}^{-1}\ c\ c_{0 \to x}\right)\right)_{c \in L_{0}(\mathbb{D})}$ defined on $\left(\mathcal{R}_{x}^{N}, {\sf \bold{U}}^{N}_x\right)$. 
\end{definition}

Its law does not depend on the choice of $c_{0\to x}$ and it was also proved by L\'{e}vy that the laws of the random holonomy field on $L_{0}(\mathbb{D})$ associated with $ {\sf \bold{U}}^{N}_x$ do not depend on the choice of $x$. From now on, we will only consider the random holonomy field on $L_{0}(\mathbb{D})$ associated with ${\sf \bold{U}}^{N}_0$. Let us state a theorem which is a direct consequence of Proposition $5.4.4$ of \cite{Levy1}. 

\begin{theorem}
\label{th:egaliYMRam}
Let $\left(S_t^{N}\right)_{t \geq 0}$ be a $\mathcal{T}_N$-random walk on $\mathfrak{S}(N)$. The holonomy field on $L_0(\mathbb{D})$ associated with ${\sf \bold{U}}^{N}_0$ has the same law as the process $\left(h(l)\right)_{l \in L_0(\mathbb{D})}$ under the $\mathfrak{S}(N)$-valued Yang-Mills measure associated with $\left(S_t^{N}\right)_{t \geq 0}$. 
\end{theorem}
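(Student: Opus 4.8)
The plan is to reduce the statement to T. Lévy's result on random ramified $G$-bundles (Proposition 5.4.4 of \cite{Levy1}) and to check that the only new feature here — that we impose no condition on the monodromy around $\partial\mathbb{D}$ — does not affect the argument. Both processes appearing in the statement are gauge-invariant elements of $\mathcal{M}ult(L_0(\mathbb{D}),\mathfrak{S}(N))$ which are stochastically continuous for the convergence with fixed endpoints: on the Yang--Mills side this is part of Theorem \ref{yangmills}, and on the covering side it holds because, $\mathcal{P}_N(dY)$-almost surely, a sufficiently small deformation of a loop in $\mathbb{D}$ does not meet $Y$. By the projective-limit construction underlying $YM^{\mathfrak{S}(N)}$ and Proposition \ref{creation}, the law of such a field is determined by its restrictions to the subgroups $P(\mathbb{G})$ for $\mathbb{G}\in\mathcal{G}({\sf Aff})$, and, after fixing a base vertex $v_0$, a spanning tree $T$ and boundary loops $(c_F)_{F\in\mathbb{F}}$, by the joint law of the facial-lasso holonomies $\big({\sf m}_N({\sf l}_F)\big)_{F\in\mathbb{F}}$. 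So it suffices to prove that, under ${\sf \bold{U}}^{N}_0$, these variables are independent with ${\sf m}_N({\sf l}_F)$ distributed as $S^N_{dx(F)}$.

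To do that I would condition on the ramification locus $Y$, which almost surely avoids the edges of $\mathbb{G}$. As recalled above, the labelled simple coverings with locus $Y$ are in bijection with $(\mathcal{T}_N)^{\#Y}$ via the monodromies of small positively oriented loops around the points of $Y$ (transported to the base point along fixed paths), and under ${\sf \bold{U}}^{N}_{x,Y}$ these transpositions are i.i.d.\ uniform. A homotopy argument — this is precisely the content of Proposition 5.4.4 of \cite{Levy1} — shows that ${\sf m}_N({\sf l}_F)$ is the ordered product of the transpositions attached to the points of $Y$ lying in the face $F$, so lassos around distinct faces involve disjoint, hence independent, families of transpositions; the spanning-tree segments $[v_0,v]_T$ in ${\sf l}_F$ only conjugate the result and are absorbed by gauge invariance. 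It then remains to average over $\mathcal{P}_N(dY)$: the counts $\#(Y\cap F)$ are independent Poisson variables of parameters $\frac{N}{2}\,dx(F)$, and since the $\mathcal{T}_N$-random walk is the compound Poisson process on $\mathfrak{S}(N)$ with jump rate $\frac{N}{2}$ (indeed $\lambda_N(1^c)=2$ for transpositions) and i.i.d.\ uniform transposition jumps, a product of $\#(Y\cap F)$ such transpositions has exactly the law of $S^N_{dx(F)}$, independently over the faces. This is the description of the ${\sf \bold{U}}^{N}_0$-holonomy given by Proposition \ref{creation}, so the two fields have the same finite-dimensional distributions on every graph, hence the same law.

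The main obstacle is not a single estimate but the bookkeeping in the second step: one must check that the bijection between labelled coverings and tuples of transpositions is compatible with the facial-lasso parametrization (correct cyclic ordering of the product around each face, correct handling of the tree segments and of the base-point change from $x$ to $0$), and that dropping the constraint on the monodromy over $\partial\mathbb{D}$ present in Lévy's ramified-bundle formulation leaves everything intact. Since all of this is carried out in Chapter 5 of \cite{Levy1}, I would simply invoke Proposition 5.4.4 there together with the elementary remarks above to pass from that statement to the present one.
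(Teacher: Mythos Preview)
Your proposal is correct and follows the same route as the paper, which simply records the theorem as ``a direct consequence of Proposition 5.4.4 of \cite{Levy1}'' without spelling out any details. Your sketch is in fact a careful unpacking of why that proposition applies here: the reduction to facial lassos via Proposition~\ref{creation}, the conditioning on the Poisson locus, and the identification of the jump rate $N/\lambda_N(1^c)=N/2$ with the intensity of $\mathcal{P}_N$ are exactly the verifications the paper alludes to when it says that dropping the boundary-monodromy constraint ``needs some simple verifications which will not be further discussed here.''
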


In a nutshell we have the following ``equality'': 
\begin{align*}
\sf{ Monodromy\ of\ random\ ramified\ coverings = \mathfrak{S}(N)\ valued\ Yang\!-\!Mills\ measure. }
\end{align*}

Using Theorems \ref{masterfield} and  \ref{th:egaliYMRam}, we have proved in this article that the traces of the monodromies of random ramified coverings of the disk of degree $N$ converge in probability as $N$ goes to infinity.

\begin{theorem}
There exists an non-random application:  
\begin{align*}
\phi: L_{0}(\mathbb{D}) &\to \mathbb{R} \\
l &\mapsto \phi(l), 
\end{align*}
which is continuous for the convergence with fixed endpoints such that for any loop $l \in L_0(\mathbb{D})$, $\frac{1}{N}Tr\left({\sf m}_N(l)\right)$ converges in probability to $\phi(l)$ as $N$ goes to infinity. The application $\phi$ is the $\mathfrak{S}(\infty)$-master field associated with the transpositions. 
\end{theorem}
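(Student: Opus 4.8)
The plan is simply to transfer the convergence result already established for the $\mathfrak{S}(N)$-Yang-Mills measure over to the monodromy field of random ramified coverings via the identification of laws. First I would invoke Theorem \ref{th:egaliYMRam}: the random holonomy field $\left({\sf m}_N(l)\right)_{l \in L_0(\mathbb{D})}$ defined on $\left(\mathcal{R}_0^{N}, {\sf \bold{U}}_0^{N}\right)$ has the same law as the canonical process $\left(h(l)\right)_{l \in L_0(\mathbb{D})}$ under the $\mathfrak{S}(N)$-valued Yang-Mills measure associated with the $\mathcal{T}_N$-random walk $\left(S_t^{N}\right)_{t \geq 0}$. In particular, for every fixed loop $l$ based at $0$ lying in $\mathbb{D}$, the real random variable $\frac{1}{N}Tr\left({\sf m}_N(l)\right)$ has exactly the same distribution as the Wilson loop $W_l^{N} = \frac{1}{N}Tr(h_l)$ on $\left(\mathcal{M}ult(P,\mathfrak{S}(N)),\mathcal{B}, YM^{\mathfrak{S}(N)}\right)$.

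Next I would observe that $L_0(\mathbb{D}) \subset L_0$, so that Theorem \ref{masterfield} applies verbatim to each such loop: $W_l^{N}$ converges in probability, as $N \to \infty$, to the deterministic constant $\phi(l)$, and the resulting map $l \mapsto \phi(l)$ is continuous for the convergence with fixed endpoints. Since convergence in probability to a constant depends only on the one-dimensional marginal laws, the equality of laws from the previous step immediately yields that $\frac{1}{N}Tr\left({\sf m}_N(l)\right)$ converges in probability to $\phi(l)$ under ${\sf \bold{U}}_0^{N}$. The continuity assertion is inherited directly from Theorem \ref{masterfield}, and $\phi$ is, by construction, the $\mathfrak{S}(\infty)$-master field attached to the transpositions, i.e. to the evanescent sequence $\lambda_N = \mathcal{T}_N$ coded by $\left((N-2,2,0,\dots,0)\right)_{N \in \mathbb{N}}$.

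There is essentially no hard step here: this theorem is a straightforward corollary of the conjunction of Theorems \ref{masterfield} and \ref{th:egaliYMRam}, whose proofs carry all the analytic content — the two-dimensional Kolmogorov-type continuity theorem (Theorem \ref{main}), the estimate of Lemma \ref{estime}, and the convergence in $\mathcal{P}$-distribution of independent $\mathcal{T}_N$-random walks together with asymptotic $\mathcal{P}$-factorization (Theorem \ref{convergencefamille} via Proposition \ref{convergenceaffine}). The only points that deserve a word of care are purely a matter of bookkeeping: checking that loops based at $0$ inside $\mathbb{D}$ are admissible index points for the Yang-Mills process defined on paths of the whole plane, and that the almost-sure multiplicativity and inversion relations satisfied by $\left({\sf m}_N(l)\right)_l$ coincide with those of $\left(h_l\right)_l$, so that the phrase ``same law'' is meaningful on the common index set $L_0(\mathbb{D})$. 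Both of these are already built into the statement of Theorem \ref{th:egaliYMRam}, so the argument reduces to citing the two theorems and remarking that convergence in probability is a law-level notion.
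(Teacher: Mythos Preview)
Your proposal is correct and matches the paper's own argument exactly: the paper states this theorem immediately after the sentence ``Using Theorems \ref{masterfield} and \ref{th:egaliYMRam}, we have proved in this article that the traces of the monodromies of random ramified coverings of the disk of degree $N$ converge in probability as $N$ goes to infinity,'' treating it as a direct corollary with no further proof given. Your observation that convergence in probability to a constant is determined by one-dimensional marginal laws is precisely the bridge the paper leaves implicit.
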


\section{Computation of the $\mathfrak{S}(\infty)$-master field}
\label{sec:computation}

Let us remark that the proofs of the existence of the $\mathfrak{S}(\infty)$-master field allows us to compute explicitely the $\mathfrak{S}(\infty)$-master field on loops which can be drawn on graphs. Indeed, given such a loop $l_0$, there exists a graph $\mathbb{G}$ and facial lassos $\left({\sf l}_{F} \right)_{F \in \mathbb{F}}$ defined on this graph such that the random variable $h_{l_0}$ is a product of random variables of the form $h_{{\sf l}_{F}}$ or $h_{{\sf l}_{F}}^{-1}$, with $F\in\mathbb{F}$. Yet, the random variables $(h_{{\sf l}_F})_{F \in \mathbb{F}}$ are independent and for any $F\in \mathbb{F}$, $h_{{\sf l}_F}$ has the same law as $S^{N}_{dx(F)}$. When $N$ goes to infinity, the family $\left(h_{{\sf l}_F} \right)_{F \in \mathbb{F}}$ converges in $\mathcal{P}$-distribution and $h_{{\sf l}_{F_0}}$ is asymptotically $\mathcal{P}$-free with $\left(h_{{\sf l}_F} \right)_{F \in \mathbb{F}\setminus \{F_0\}}$ for any bounded face $F_0$. This $\mathcal{P}$-freeness allows us to compute the $\mathfrak{S}(\infty)$-master field. 

Let us give an exemple of a computation of $\phi(l)$, where $\phi$ is the $\mathfrak{S}(\infty)$-master field. Let us consider the loop $l$ as drawn in Figure \ref{fig:2}. Let us consider the loops $a$, $b$ and $c$ as drawn in Figure \ref{fig:3}. 

\begin{figure}[h!]
\centering
\resizebox{0.75\textwidth}{!}{
  \includegraphics[width=2pt]{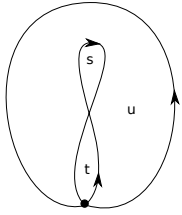}
}
\caption{The loop $l$.}
\label{fig:2}       
\end{figure}
\begin{figure}[h!]
\centering
\resizebox{0.75\textwidth}{!}{
  \includegraphics[width=9.5pt]{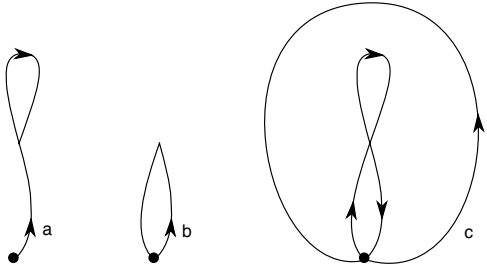}
}
\caption{The decomposition in facial lassos.}
\label{fig:3}       
\end{figure}

We have the decomposition: 
\begin{align*}
l = aba^{-1}bc. 
\end{align*}
Thus, we have the decomposition for the holonomy field: 
\begin{align*}
h(l) = h(c) h(b) h(a^{-1}) h(b) h(a). 
\end{align*}
For any $N$, under the Yang-Mills measure with $\mathfrak{S}(N)$ structure group, the random variables $h(a)$, $h(b)$ and $h(c)$ are independent and they have respectively the law of $S_{s}^{N}$, $S_{t}^{N}$ and $S_{u}^{N}$, where $(S_t^{N})_{t \geq 0}$ is the walk by transpositions on $\mathfrak{S}(N)$. For any integer $N$, let $A_N$, $B_N$, $C_N$ be three independent variables which have respectively  the law of $S_{s}^{N}$, $S_{t}^{N}$ and $S_{u}^{N}$. We want to compute: 
\begin{align*}
\phi(l)=\lim_{N \to \infty} \frac{1}{N} \mathbb{E}[Tr(C_NB_NA_N^{-1}B_NA_N)]. 
\end{align*}
Let us recall a consequence of Theorem 3.4 of \cite{Gab2}. 
\begin{lemma}
\label{lemma:calcul}
Let $M$ and $L$ be two elements of $L^{\infty^{-}} \otimes \mathcal{M}(\mathbb{C})$ which converge in $\mathcal{P}$-distribution and which are asymptotically $\mathcal{P}$-free. Then: 
\begin{align*}
\mathbb{E}m_{id_1}[ML] &= \mathbb{E}m_{id_1}[M]\mathbb{E}m_{id_1}[L]\\
\mathbb{E}m_{(1,2)}[ML,ML^{-1}]& = \mathbb{E}\kappa_{id_{2}}[M] + \mathbb{E}\kappa_{{\sf 0}_2}[M] \mathbb{E}m_{{\sf 0}_2}[L,L^{-1}] \\&\ \ \ \ \ \ \ \ \ \ \ \ \ \ \ + \mathbb{E}\kappa_{(1,2)}[M] \mathbb{E}m_{id_2}[L,L^{-1}], 
\end{align*}
where $(1,2)$ is the transposition and ${\sf 0}_2 = \{1,2,1',2'\}$. 
\end{lemma}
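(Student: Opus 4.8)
The plan is to reduce both equalities to the moment--cumulant relation of Theorem \ref{th:convergence} applied to suitably ``split'' words in $M$, $L$ and $L^{-1}$, and then to evaluate the resulting sums using the two defining properties of asymptotic $\mathcal{P}$-freeness (vanishing of mixed cumulants, factorization of compatible cumulants). The starting point is the pair of elementary matrix identities, valid for every $N$: $m_{id_1}[ML]=m_{(1,2)}[M,L]$ and $m_{(1,2)}[ML,ML^{-1}]=m_{\gamma}[M,L,M,L^{-1}]$, where $(1,2)\in\mathfrak{S}_2$ is the transposition and $\gamma\in\mathfrak{S}_4$ is the four-cycle for which $\frac{1}{N}Tr(X_1X_2X_3X_4)=m_{\gamma}[X_1,X_2,X_3,X_4]$; both follow at once by writing out $\rho_N$ and the definition of the observables $m_p$ (for instance $\rho_N((1,2))$ is the flip, so $m_{(1,2)}[A,B]=\frac{1}{N}Tr(AB)$). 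I will also use that the asymptotic $\mathcal{P}$-freeness of $M$ and $L$ propagates to the $\mathcal{P}$-freeness of $\{M\}$ from $\{L,L^{-1}\}$, so that the vanishing and factorization rules apply with $L^{-1}$ treated as a letter of the $L$-family.

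For the first identity, a short computation of the distance $d$ shows $[id,(1,2)]_{\mathcal{P}_2}=\{id_2,(1,2),{\sf 0}_2\}$, so Theorem \ref{th:convergence} gives $\mathbb{E}m_{id_1}[ML]=\mathbb{E}\kappa_{id_2}[M,L]+\mathbb{E}\kappa_{(1,2)}[M,L]+\mathbb{E}\kappa_{{\sf 0}_2}[M,L]$. In $(1,2)$ and in ${\sf 0}_2$ the two columns lie in a single cycle yet carry the distinct letters $M$ and $L$, so the first $\mathcal{P}$-freeness property annihilates both of these cumulants; for $id_2=id_1\otimes id_1$ the second $\mathcal{P}$-freeness property gives $\mathbb{E}\kappa_{id_2}[M,L]=\mathbb{E}\kappa_{id_1}[M]\,\mathbb{E}\kappa_{id_1}[L]$, which equals $\mathbb{E}m_{id_1}[M]\,\mathbb{E}m_{id_1}[L]$ since $[id,id_1]_{\mathcal{P}_1}=\{id_1\}$. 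This is the first claim.

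For the second identity I expand $\mathbb{E}m_{\gamma}[M,L,M,L^{-1}]=\sum_{p'\in[id,\gamma]_{\mathcal{P}_4}}\mathbb{E}\kappa_{p'}[M,L,M,L^{-1}]$ by Theorem \ref{th:convergence}. The first $\mathcal{P}$-freeness property restricts the sum to those $p'$ each of whose cycles (blocks of $p'\vee id$) is contained either in the $M$-columns $\{1,1',3,3'\}$ or in the $L$-columns $\{2,2',4,4'\}$; for such a $p'$, transposing columns $2$ and $3$ turns $\kappa_{p'}[M,L,M,L^{-1}]$ into $\kappa_{p_1\otimes p_2}[M,M,L,L^{-1}]$ with $p_1\in\mathcal{P}_2$ on the $M$-pair and $p_2\in\mathcal{P}_2$ on the $L$-pair, and the second $\mathcal{P}$-freeness property then yields $\mathbb{E}\kappa_{p'}[M,L,M,L^{-1}]=\mathbb{E}\kappa_{p_1}[M,M]\,\mathbb{E}\kappa_{p_2}[L,L^{-1}]$. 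It then remains to organize the surviving terms: I will check that $p_1$ ranges exactly over $[id,(1,2)]_{\mathcal{P}_2}=\{id_2,(1,2),{\sf 0}_2\}$ and that, for each such $p_1$, the admissible $p_2$ are precisely the elements of $[id,\,p_1\circ(1,2)]_{\mathcal{P}_2}$ (composition in $\mathcal{P}_2$), so that summing over $p_2$ and applying Theorem \ref{th:convergence} once more gives $\sum_{p_2}\mathbb{E}\kappa_{p_2}[L,L^{-1}]=\mathbb{E}m_{p_1\circ(1,2)}[L,L^{-1}]$. Since $id_2\circ(1,2)=(1,2)$ with $m_{(1,2)}[L,L^{-1}]=\frac{1}{N}Tr(LL^{-1})=1$, while $(1,2)\circ(1,2)=id_2$ and ${\sf 0}_2\circ(1,2)={\sf 0}_2$, the three values $p_1\in\{id_2,(1,2),{\sf 0}_2\}$ produce respectively $\mathbb{E}\kappa_{id_2}[M]$, $\mathbb{E}\kappa_{(1,2)}[M]\,\mathbb{E}m_{id_2}[L,L^{-1}]$ and $\mathbb{E}\kappa_{{\sf 0}_2}[M]\,\mathbb{E}m_{{\sf 0}_2}[L,L^{-1}]$, whose sum is the asserted right-hand side.

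The computational heart, and the step I expect to be the most delicate, is the last one: the explicit identification of which $p'\in[id,\gamma]_{\mathcal{P}_4}$ are compatible with the two-coloring and of how they regroup, i.e. the verification that the admissible pairs are exactly $\{(p_1,p_2)\mid p_1\in[id,(1,2)]_{\mathcal{P}_2},\ p_2\in[id,\,p_1\circ(1,2)]_{\mathcal{P}_2}\}$. This is a finite check, but it relies on the behavior of the geodesic order under the operation ``compose with $(1,2)$'' and on the additivity of $d$ over disjoint column sets; alternatively, one may simply invoke Theorem $3.4$ of \cite{Gab2}, of which the present lemma is the specialization to the words $ML$ and $MLML^{-1}$, the only extra input being the evaluation $m_{(1,2)}[L,L^{-1}]=1$. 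The first identity, by contrast, is entirely routine.
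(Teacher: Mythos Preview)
The paper does not actually prove this lemma: it is stated without proof as ``a consequence of Theorem 3.4 of \cite{Gab2}''. Your proposal supplies precisely that missing derivation, and the route you take --- split the product into a longer word, apply the moment--cumulant relation of Theorem \ref{th:convergence}, kill the mixed cumulants and factorize the surviving ones via the two defining properties of $\mathcal{P}$-freeness, then resum over the $L$-side to get back a moment --- is exactly how Theorem 3.4 of \cite{Gab2} is proved and specialized. You correctly identify the interval $[id,(1,2)]_{\mathcal{P}_2}=\{id_2,(1,2),{\sf 0}_2\}$ and the compositions $p_1\circ(1,2)$, and you are right that the second identity is the more delicate one while the first is routine. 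In short, your argument is correct and is essentially an explicit unpacking of the citation the paper relies on; you even note this yourself in the last paragraph.
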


Using this lemma, we get that $\lim_{N \to \infty} \frac{1}{N} \mathbb{E}[Tr(C_NB_NA_N^{-1}B_NA_N)]$ is equal to: 
\begin{align*}
\left(\lim_{N \to \infty} \frac{1}{N} \mathbb{E}[Tr(C_N)]\right) \left(\lim_{N \to \infty} \frac{1}{N} \mathbb{E}[Tr(B_NA_N^{-1}B_NA_N)]\right). 
\end{align*}
We already computed $\left(\lim_{N \to \infty} \frac{1}{N} \mathbb{E}[Tr(C_N)]\right)$ in the proof of Lemma \ref{estime}: it is equal to $e^{-u}$. Besides: $$ \left(\lim_{N \to \infty} \frac{1}{N} \mathbb{E}[Tr(B_NA_N^{-1}B_NA_N)]\right) = \mathbb{E}m_{(1,2)}[BA,BA^{-1}].$$ After some easy computations similar to the one we did in Lemma \ref{calculkappa}, using the results given in Theorem \ref{permutations} and in Lemma \ref{lemma:calcul}, we get that: 
\begin{align*}
\phi(l) = e^{-u}\left[ e^{-2t} + e^{-t}e^{-s} - e^{-2t}e^{-s} + e^{-2t}e^{-2s} - e^{-2t}e^{-2s} t \right].\\
\end{align*}

{\em Acknowledgements.} The author would like to gratefully thank A. Dahlqvist and G. C\'{e}bron for the useful discussions, his PhD advisor Pr. T. L\'{e}vy for his helpful comments and his postdoctoral supervisor, Pr. M. Hairer, for giving him the time to finalize this article. 

The first version of this work has been made during the PhD of the author at the university Paris 6 UPMC. This final version of the paper was completed during his postdoctoral position at the University of Warwick where the author is supported by the ERC grant, “Behaviour near criticality”, held by Pr. M. Hairer.

\end{document}